\documentclass{amsart}

\usepackage{amsmath,amssymb,amsfonts,amsthm,mathtools}
\usepackage{booktabs,array}
\usepackage{enumitem}
\usepackage[protrusion=true,expansion=false]{microtype}
\usepackage[hidelinks]{hyperref}
\newtheorem{theorem}{Theorem}[section]
\newtheorem{proposition}[theorem]{Proposition}
\newtheorem{lemma}[theorem]{Lemma}
\newtheorem{corollary}[theorem]{Corollary}
\newtheorem*{mainthm}{Main theorem}
\theoremstyle{definition}
\newtheorem{definition}[theorem]{Definition}
\theoremstyle{remark}
\newtheorem{remark}[theorem]{Remark}

\newcommand{\A}{\mathbb A}
\newcommand{\R}{\mathbb R}
\newcommand{\Z}{\mathbb Z}
\newcommand{\Gm}{\mathbb G_m}
\newcommand{\KMW}{\mathbf K^{\mathrm{MW}}}
\newcommand{\KM}{\mathbf K^{\mathrm M}}
\newcommand{\Hcell}{\mathbf H^{\mathrm{cell}}}
\newcommand{\Ccell}{C^{\mathrm{cell}}}
\newcommand{\cell}{\mathrm{cell}}

\newcommand{\rank}{\operatorname{rank}}
\newcommand{\coker}{\operatorname{coker}}
\newcommand{\Spec}{\operatorname{Spec}}
\newcommand{\Hom}{\operatorname{Hom}}

\newcommand{\length}{\ell}
\newcommand{\htc}{\operatorname{ht}}
\newcommand{\GW}{\mathrm{GW}}

\title[Cellular \texorpdfstring{$\A^1$}{A1}-Homology of Split Flags]
{Cellular \texorpdfstring{$\A^1$}{A1}-Homology from Bruhat Boundary
Matrices of Split Semisimple Flag Varieties }
\author{Haoyang Liu and Tianle Liu}
\date{}
\address{University of California, Santa Barbara}
\email{haoyangliu@ucsb.edu}
\address{University of Southern California}
\email{tianleli@usc.edu}
\subjclass[2020]{Primary 14F42; Secondary 14M15, 55N10, 20F36}
\keywords{Cellular \(\A^1\)-homology; split semisimple flag varieties; Bruhat decomposition;
Milnor--Witt \(K\)-theory; Smith normal form}
\hypersetup{
  pdftitle={Cellular A1-Homology from Bruhat Boundary Matrices of Split Semisimple Flag Varieties},
  pdfauthor={Tianle Liu}
}

\begin{document}

\begin{abstract}
Let \(k\) be a perfect field of characteristic different from \(2\), we
compute the cellular \(\A^1\)-homology of the flag varieties
\(G/P_\Theta\) attached to split semisimple simply connected groups over
\(k\) and describe the differentials in the cellular \(\A^1\)-chain complex concretely. The construction applies uniformly to the type \(A\) coefficient
formula, to the type \(B_n,C_n,D_n\) for \(n\leq 7\), and to
the exceptional types for \(F_4,E_6,E_7\).
Under real
realization over \(k=\R\), this computation recovers the corresponding results of real flag manifolds.  We also provide a detailed computation
for \(SL_3/B\) and an application to the full split flag variety of type
\(F_4\).
\end{abstract}

\maketitle
\tableofcontents

\section{Introduction}
\label{sec:introduction}

Flag varieties are among the basic test objects where topology,
representation theory, and algebraic geometry meet.  Their Bruhat cells
are affine spaces, but integral cellular boundary maps still carry
non-trivial orientation data.  For split real flag manifolds these signs
are visible in ordinary topology: real Bruhat cells occur in every
dimension, and determining the signed cellular boundary is essential for
integral homology.  A closed computable formula for the cellular homology
coefficients in type \(A\) was first given in
\cite{LambertRabeloTypeA}.  The subsequent normal-form algorithm, based on
explicit degrees of coordinate changes, applies to classical types
\(B_n,C_n,D_n\) for \(n\leq 7\) and to the exceptional types
\(F_4,E_6,E_7\) \cite{LambertRabelo2026}.
This continues the older program of computing integral homology of real
flag manifolds, including Kocherlakota's work on real flag manifolds and
loop spaces of symmetric spaces \cite{Kocherlakota}.

The purpose of this paper is to lift these integral boundary-matrix
calculations to cellular \(\A^1\)-homology over a perfect base field
\(k\) with \(\operatorname{char}k\neq2\).  Morel--Sawant's cellular
construction is available over perfect fields; in the present argument,
the exclusion of characteristic \(2\) is also used for the normalized
Chevalley coordinates in the \(B_2=C_2\) root strings.  We work in the
cases where the required boundary data are available: type \(A\),
classical types \(B_n,C_n,D_n\) for \(n\leq7\), and exceptional types
\(F_4,E_6,E_7\).  These types have no rank-two \(G_2\)-subsystem.

Cellular \(\A^1\)-homology, introduced by Morel--Sawant
\cite{MorelSawant}, is an algebraic analogue of cellular homology for
smooth schemes endowed with an oriented cellular structure.  Its chain
groups are built from strictly \(\A^1\)-invariant sheaves, with
Milnor--Witt \(K\)-theory replacing the coefficient group \(\Z\) of
ordinary cellular homology. This concept is particularly useful
for explicitly calculating the (co)homology in \(\A^1\)-homotopy, facilitating computations of various
cohomology theories, including motivic cohomology, Chow groups, MW-motivic
cohomology, and Chow-Witt groups. For the flag varieties considered here, as
for the smooth toric varieties studied by Liu--Peng \cite{LiuPeng}, the
main issue is not the existence of cells but the orientation of their
attaching maps.  In the Bruhat setting, the required orientation data are
precisely the deletion signs, degrees of coordinate changes, and
coroot-height parities appearing in the signed Bruhat boundary formula.
For type \(A\), there is also a complementary algebro-geometric point of
comparison: Hudson--Matszangosz--Wendt compute Chow--Witt rings and
Witt-sheaf cohomology rings of partial flag varieties and prove that the
integral cohomology torsion of type \(A\) real flag manifolds is
\(2\)-torsion \cite{HudsonMatszangoszWendt}.  Our result is homological
and chain-level; after specializing to \(k=\R\), it fits the same
\(2\)-torsion pattern through the \(\eta\)-primary terms in
Milnor--Witt \(K\)-theory.

Let \(G/P_\Theta\) be a flag variety attached to a split semisimple
simply connected group \(G\) in the range covered by the available
boundary data.  Write
\[
  X_\Theta=\coprod_{w\in W^\Theta}C_w,\qquad C_w\simeq \A_k^{\ell(w)}
\]
for the Bruhat decomposition, indexed by minimal representatives
\(W^\Theta\subset W/W_\Theta\).  If \(d=\dim X_\Theta\), the
Morel--Sawant cellular $\mathbb{A}^1$-chain complex is naturally indexed by codimension:
\[
  \Ccell_i(X_\Theta)\cong
  \bigoplus_{\ell(w)=d-i}\KMW_i
  \quad (i\geq1),
  \qquad
  \Ccell_0(X_\Theta)\cong\Z.
\]
Let \(w_0\) be the longest element of \(W\), and let \(w_\Theta\) be the
longest element of \(W_\Theta\).  We use the length-reversing reindexing
\[
  \pi_\Theta(w)=w_0ww_\Theta\in W^\Theta,\qquad
  \ell(\pi_\Theta(w))=d-\ell(w),
\]
and write \(P_i\) for the permutation matrix
\[
  P_i[w]=[\pi_\Theta(w)]\colon
  \bigoplus_{\ell(w)=d-i}\Z[w]\longrightarrow
  \bigoplus_{\ell(u)=i}\Z[u].
\]

\begin{mainthm}
Let \(k\) be a perfect field with \(\operatorname{char}k\neq2\), and let
\(X_\Theta=G/P_\Theta\) be a flag variety attached to a split
semisimple simply connected group \(G\) over \(k\) for which the boundary-data
hypothesis in Definition~\ref{def:available-boundary-data} holds.  Let \(D_i^\Theta\)
be one half
of the signed cellular boundary matrix from \(i\)-cells to
\((i-1)\)-cells in the ordinary real Bruhat cellular complex, and set
\[
  E_i^\Theta=P_{i-1}^{-1}D_i^\Theta P_i
  \qquad (i\geq1).
\]
The oriented cellular \(\A^1\)-differential is
\[
  \partial_i^{\A^1}=\eta E_i^\Theta\quad (i\geq2),
  \qquad
  \partial_1^{\A^1}=0.
\]
Equivalently, for \(i\geq2\), if \(w\in W^\Theta\), \(\ell(w)=d-i\), and
\(v\gtrdot w\), then the coefficient of \([v]\) in
\(\partial_i^{\A^1}[w]\) is computed from the ordinary Bruhat cover
\[
  \pi_\Theta(w)\gtrdot \pi_\Theta(v).
\]
Let \(I\) be the deleted position in the fixed normal-form reduced word for
\(\pi_\Theta(w)\), let \(\gamma\) be the root with
\(\pi_\Theta(w)=\pi_\Theta(v)s_\gamma\), and let
\(\Phi_{\pi_\Theta(v)}^{-1}\Phi_{\widehat{\pi_\Theta(w)}_I}\) be the
normal-form coordinate change.  The motivic boundary coefficient is
\[
  \mu(v,w)=
  \langle -1\rangle^I
  \deg^{\A^1}
  \bigl(\Phi_{\pi_\Theta(v)}^{-1}
        \Phi_{\widehat{\pi_\Theta(w)}_I}\bigr)
  (\htc(\gamma^\vee)+1)_\epsilon\eta .
\]
\end{mainthm}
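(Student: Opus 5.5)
The plan is to compute the Morel--Sawant differential one Bruhat cover at a time, and then assemble the matrix identity \(\partial_i^{\A^1}=\eta E_i^\Theta\) from these local computations.

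\emph{Step 1: oriented cellular structure and chain groups.} We use the Bruhat filtration of \(X_\Theta\) by closed unions of Schubert cells. Its strata are the cells \(C_w\simeq\A^{\ell(w)}\), with normal bundle trivialized by the fixed normal-form Chevalley coordinates \(\Phi_w\). This identifies \(\Ccell_i(X_\Theta)\) with \(\bigoplus_{\ell(w)=d-i}\KMW_i\), where codimension-\(i\) cells contribute \(\KMW_i\) via the Thom isomorphism.

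\emph{Step 2: reduce each coefficient to a rank-one computation.} By Morel--Sawant, the coefficient of \([v]\) in \(\partial_i[w]\), for \(v\gtrdot w\), is the boundary of a punctured normal slice. Concretely, it is a map \(\KMW_i\to\KMW_{i-1}\) given by multiplication by an element of \(\KMW_{-1}(k)=\Z\cdot\eta\) (as a \(\GW(k)\)-module via \(\eta\langle a\rangle=\eta+\eta[a]\eta\)...). This element is determined by the \(\A^1\)-degree of the clutching map \(\Gm\to\Gm\) in the normal \(\mathbb P^1\)-direction, composed with the change of normal trivializations.

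We pass to the dual cell \(\pi_\Theta\) (translation by \(w_0\) and right multiplication by \(w_\Theta\)). This converts the codimension-one incidence \(C_w\subset\overline{C_v}\) of the codimension-indexed complex into the dimension-indexed cover \(\pi_\Theta(w)\gtrdot\pi_\Theta(v)\). Here the normal-form word of \(\pi_\Theta(w)\) with position \(I\) deleted gives the orbit through \(\pi_\Theta(v)\). This is exactly the conjugation by \(P_{i-1}^{-1}\) and \(P_i\).

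\emph{Step 3: the local degree.} Restricted to the root subgroup \(U_{-\gamma}\), the attaching map in the deleted coordinate is \(t\mapsto t^{-1}\) twisted by \(\gamma^\vee(t)\). Its action on the remaining coordinates has weight \(\htc(\gamma^\vee)\), up to the coordinate change \(\Phi_{\pi_\Theta(v)}^{-1}\Phi_{\widehat{\pi_\Theta(w)}_I}\). We compute its \(\A^1\)-degree as follows:
\begin{itemize}[label={}]
\item the sign of moving the deleted coordinate to the front contributes \(\langle-1\rangle^I\);
\item the coordinate change contributes its \(\A^1\)-degree, which lies in \(\GW(k)\) and is a product of \(\langle\pm1\rangle\)'s because the Chevalley structure constants are \(\pm1,\pm2\) and \(\operatorname{char}k\ne2\);
\item the map \(t\mapsto t^{m}\) with \(m=\htc(\gamma^\vee)+1\) contributes \(m_\epsilon\).
\end{itemize}
Multiplying by \(\eta\), and using \(m_\epsilon\eta\in\{0,\eta\}\) together with \(\langle-1\rangle\eta=-\eta\)... gives \(\mu(v,w)\).

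\emph{Step 4: identify with the real boundary and handle \(i=1\).} Real realization sends \(\langle-1\rangle\mapsto-1\) and \(\eta\mapsto 2\) on the relevant \(\KMW_{-1}\)-factor, so \(\mu(v,w)/\eta\) realizes to one half of the real signed coefficient. Because each coefficient lies in \(\Z\eta\) with \(\eta\)-torsion structure fixed, the integer \(\mu/\eta\) equals the corresponding entry of \(E_i^\Theta\). For \(i=1\), the target \(\Ccell_0=\Z\) receives no \(\eta\)-multiples, so \(\partial_1=0\).

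The main obstacle is Step 3: proving that the Morel--Sawant orientation conventions agree with the normal-form coordinates, including the \(B_2=C_2\) strings. This is where \(\operatorname{char}k\neq2\) and the boundary-data hypothesis of Definition~\ref{def:available-boundary-data} enter. I would first verify it on \(SL_2/B\) and on rank-two Levi subgroups, then reduce the general cover to its rank-two Levi.
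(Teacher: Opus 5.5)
Your outline has the same architecture as the paper's proof: each attaching component factors into the deletion sign, the degree of the face coordinate change, a power-map factor $m_\epsilon$, and the connecting $\eta$. But Step~3, which carries the content, is asserted rather than derived, and the mechanism you sketch does not by itself produce it. What must be proved is that along the cover $\pi_\Theta(w)\gtrdot\pi_\Theta(v)$ the normal coordinate of the attaching map is $t^{\htc(\gamma^\vee)+1}u(t)$ with $u(0)=1$, so that the normal factor is $(\htc(\gamma^\vee)+1)_\epsilon\eta$ with no extra unit. A twist of the face coordinates by $\gamma^\vee(t)$ is a family of linear automorphisms of the face directions, not a power map on the normal $\Gm$. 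You do not compute its Milnor--Witt contribution, nor prove the relation between its weights and $\htc(\gamma^\vee)$ that would be needed.

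Your plan to ``reduce the general cover to its rank-two Levi'' also cannot work as stated: for a root $\gamma$ of large coroot height, no rank-two subsystem contains the cover. The paper (Lemma~\ref{lem:root-string-monomial}) instead transports the deleted normal direction along the normal-form chain of rank-two moves, starting from $t\mapsto t^2$ for a simple root. Each crossing of a simple root $\alpha$ raises the $t$-order by $\htc((s_\alpha\delta)^\vee)-\htc(\delta^\vee)=-\langle\alpha,\delta^\vee\rangle$, so the total telescopes to $\htc(\gamma^\vee)+1$. Triangularity of the Chevalley product, together with a check on braid loops, keeps the leading coefficient equal to $+1$.

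This unit control is essential over a general $k$, since $\KMW_{-1}(k)=\GW(k)\eta\cong W(k)$ and $\langle a\rangle\eta\neq\eta$ in general; for example $\langle 2\rangle\eta\neq\eta$ over $\mathbb Q$ or $\mathbb F_3$. So your claim that the coordinate change has degree a product of $\langle\pm1\rangle$'s ``because the structure constants are $\pm1,\pm2$ and $\operatorname{char}k\neq2$'' is not valid: $\operatorname{char}k\neq2$ only makes $\langle2\rangle$ defined, not trivial. You need the divided $B_2$ coordinates and the check that the constant $2$ never enters the face Jacobian or the leading unit of a non-zero normal factor (Lemma~\ref{lem:b2-normalized-leading}).

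Step~4 rests on a false identification: $\KMW_{-1}(k)$ is $W(k)$, not $\Z\eta$. When $-1$ is a square, $2\eta=(1+\langle-1\rangle)\eta=0$, so ``the integer $\mu/\eta$'' is not defined. For $\operatorname{char}k=p>2$ there is no real realization at all. What $\partial_i^{\A^1}=\eta E_i^\Theta$ actually requires is an equality of integers independent of $k$. The local computation yields $\mu(v,w)=(-1)^I\sigma\,\eta$ (or $0$), with $\sigma=\pm1$ the determinant of the transition over $\Z[1/2]$. One must then show $(-1)^I\sigma=c_\Theta(\pi_\Theta(w),\pi_\Theta(v))/2$ whenever $\htc(\gamma^\vee)$ is even. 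The paper gets this by identifying $\sigma$ with $\deg_{\mathrm{coord}}$ in Proposition~\ref{prop:lr-boundary}, using that the algebraic and real root coordinates differ by positive rescalings, and then matching the Lambert--Rabelo formula term by term.

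Finally, you never treat the partial flag case. The normal orientations on $G/P_\Theta$ are quotient orientations from $G/B$, and $c_\Theta$ is defined by restrict-and-project. You must therefore show that the relative tangent directions of $G/B\to G/P_\Theta$ contribute only $\langle1\rangle$ (Lemma~\ref{lem:partial-vertical-determinant}). Note also that $\Phi_w$ parametrizes the cell, not its normal bundle. The normal orientation comes from the complementary expression for $w_0w$, and that is what justifies the reindexing by $\pi_\Theta$.
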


When \(k=\R\), real realization recovers the ordinary coefficient of the
reindexed Bruhat cellular boundary,
\[
  (-1)^I
  \deg\bigl(\Phi_{\pi_\Theta(v)}^{-1}
        \Phi_{\widehat{\pi_\Theta(w)}_I}\bigr)(\R)
  \bigl(1+(-1)^{\htc(\gamma^\vee)}\bigr),
\]
so the theorem recovers the signed real cellular boundary after this
reindexing.  Over \(k\), the same statement says that the  cellular $\mathbb{A}^1$-
boundary is the reindexed matrix \(D_i^\Theta=\delta_i^\Theta/2\)
multiplied by \(\eta\).

This chain-level comparison turns the integral Bruhat boundary complex
into a Milnor--Witt sheaf computation.  Taking Smith normal form of the
complex \(D_\bullet^\Theta\) over \(\Z\) gives
\[
  \begin{aligned}
  \Hcell_i(X_\Theta)
  &\cong
  (\KMW_i)^{b_i}
  \oplus
  \bigoplus_{q\geq1}(\KMW_i/q\eta)^{m_i(q)}\\
  &\qquad{}\oplus
  \bigoplus_{q\geq1}({}_{q\eta}\KMW_i)^{m_{i-1}(q)}
  \qquad (i\geq1).
  \end{aligned}
\]
Here \(m_0(q)=0\), reflecting the vanishing of the degree-one
\(D_1^\Theta\).
In the cases where the associated split real flag variety
\(X_{\Theta,\R}\) has integral homology of the form
\[
  H_j(X_{\Theta,\R}(\R),\Z)\cong \Z^{\beta_j}\oplus(\Z/2)^{T_j},
\]
this simplifies to
\[
  \Hcell_i(X_\Theta)
  \cong
  (\KMW_i)^{\beta_i}
  \oplus
  (\KM_i)^{T_i}
  \oplus
  ({}_\eta\KMW_i)^{T_{i-1}} .
\]
Thus the topological \(2\)-torsion does not disappear in the motivic
answer: it is refined into the two sheaf-theoretic contributions
\(\KM_i\) and \({}_\eta\KMW_i\).

The comparison has a uniform consequence.  After the split pinning and
horizontal normal forms have been fixed, the type \(A\) coefficient
formula and every matrix \(D_i^\Theta=\delta_i^\Theta/2\) available in the classical
range \(B_n,C_n,D_n\), \(n\leq 7\), and in the exceptional range
\(F_4,E_6,E_7\), lift to a cellular \(\A^1\)-chain complex over every
perfect field \(k\) with \(\operatorname{char}k\neq2\).  These types
satisfy the no-\(G_2\) rank-two condition.  After real specialization,
the type \(A\) output can be compared with the
Hudson--Matszangosz--Wendt Chow--Witt calculations, while the
Poincar\'e-polynomial and orientability computations in the higher-type
tables become integer input for cellular $\mathbb{A}^1$-homology.  A
cover-by-cover computation appears in Section~\ref{sec:sl3-example}, and
the full \(F_4\) flag is treated using the cited tables in
Section~\ref{sec:f4-example}.

\medskip
\noindent\textbf{Organization.}
Section~\ref{sec:bruhat-cells} fixes the Bruhat charts, orientations, and
cellular chain groups over \(k\).  Section~\ref{sec:real-boundary}
recalls the integral Bruhat boundary formula.
Section~\ref{sec:comparison} proves the Bruhat--Milnor--Witt comparison
and the chain model
\(\partial_i^{\A^1}=\eta E_i^\Theta\).  Section~\ref{sec:smith} derives
the Smith normal form formula and explains the applications to the
cited boundary tables.  Section~\ref{sec:sl3-example} gives a
cover-by-cover \(SL_3/B\) computation, and
Section~\ref{sec:f4-example} gives the application to the full \(F_4\)
flag.

\section{Bruhat cells and oriented cellular chains}
\label{sec:bruhat-cells}

Throughout the paper \(k\) is a perfect field with
\(\operatorname{char}k\neq2\).  Let
\(G\) be a split semisimple simply connected algebraic group over \(k\),
with a fixed split pinning, and let \(B\subset G\) be the corresponding
split Borel subgroup.  Let \(P_{\Theta}\supset B\) be the standard
parabolic subgroup determined by a subset \(\Theta\) of the simple roots.
We write
\[
  X_\Theta = G/P_{\Theta}
\]
for the smooth projective split flag variety over \(k\).  The invariant
studied here is the cellular \(\A^1\)-homology sheaf
\(\Hcell_i(X_\Theta)\) in the abelian category of strictly
\(\A^1\)-invariant Nisnevich sheaves over \(k\).

\begin{definition}
\label{def:available-boundary-data}
A partial flag variety \(X_\Theta=G/P_\Theta\) attached to a split
semisimple simply connected group \(G\) over \(k\) satisfies the
boundary-data hypothesis if its pinned root datum and parabolic type occur in
one of the sources of Bruhat boundary data in
\cite{LambertRabeloTypeA,LambertRabelo2026}:
type \(A\) with its signed coefficient formula, the classical types
\(B_n,C_n,D_n\) for \(n\leq 7\) with computed normal-form boundary
matrices, or the exceptional types \(F_4,E_6,E_7\) with computed
normal-form boundary matrices.  In each case the fixed normal-form words,
deletion positions, and degrees of coordinate changes are used as integral
input.
\end{definition}

All results are proved under the boundary-data hypothesis in
Definition~\ref{def:available-boundary-data}, with the cited signed boundary
matrices and normal-form data taken as input.  This is exactly the range
where the necessary signed integer Bruhat boundary matrices are known.
The local motivic comparison is proved by Chevalley coordinate
calculations in rank-two subsystems of type \(A_1\times A_1\), \(A_2\),
and \(B_2=C_2\); no \(G_2\) determinant table is used here.  Thus a split
root datum not covered by Definition~\ref{def:available-boundary-data} is not covered
unless analogous normal-form boundary data have first been supplied.
Non-split forms are also not included: the proof uses the global split
pinning and root subgroup coordinates to choose the Milnor--Witt
orientations.

The assumption \(\operatorname{char}k\neq2\) will remain in force
throughout.  Morel--Sawant's oriented cellular formalism is available over
perfect fields; here the exclusion of characteristic \(2\) is also needed
for the normalized Chevalley coordinates in the \(B_2=C_2\) root strings,
where the divided coordinates require inverting the structure constant
\(2\).

We use the following orientation data throughout.  A \emph{weak pinning}
is in the sense of Morel--Sawant
\cite[Definition~3.17]{MorelSawant}: it fixes the root-line generators
up to the equivalence that preserves the induced orientations of Bruhat
normal determinant lines.  We also use their distinction between
horizontal and normal reduced expressions \cite[Remark~3.19]{MorelSawant}.
For the stratum indexed by \(w\), the horizontal expression is a reduced
word for \(w\) and gives the cell coordinate orientation.  The normal
expression is a reduced word for the complementary element used to orient
the normal bundle; for the full flag this is \(w_0w\) in the
Morel--Sawant convention.  In this paper the horizontal expressions are
the fixed normal forms.  The normal expressions are chosen
compatibly with the determinant identity
\[
  \det T(C_w)\otimes\det\nu_{C_w}\cong \det T(X_\Theta)|_{C_w}.
\]
Thus the fixed normal-form words orient the ordinary real cells.  The orientations are the complementary determinant
orientations determined by the same weak pinning.

When \(k=\R\), the real manifold
\[
  X_\Theta(\R)=G(\R)/P_{\Theta}(\R)
\]
has a CW structure whose cells are the real points of the Bruhat cells,
and the signed boundary matrix is the matrix of the ordinary cellular
boundary of this real CW complex.  For general \(k\), the same split root datum,
pinning, Bruhat covers, and normal-form reduced words define the same integer
matrix \(D_i^\Theta=\delta_i^\Theta/2\).  The role of real realization is therefore to check
that the motivic coefficient specializes to the topological coefficient
\(0,\pm2\) when \(k=\R\), not to define the cellular $\mathbb{A}^1$-chain complex.

Let \(W\) be the Weyl group of \(G\), let \(W_\Theta\) be the parabolic
subgroup generated by the simple reflections in \(\Theta\), and let
\[
  W^\Theta=\{w\in W:\length(ws)>\length(w)\text{ for every }s\in W_\Theta
  \text{ simple}\}
\]
be the set of minimal right coset representatives for \(W/W_\Theta\).
The Bruhat decomposition gives
\[
  X_\Theta=\coprod_{w\in W^\Theta} C_w,\qquad C_w\simeq \A_k^{\length(w)} .
\]

Fix a split pinning and representatives \(\dot s_\alpha\) of the simple
reflections.  If
\(\mathbf w=(s_{i_1},\ldots,s_{i_r})\) is a reduced word for \(w\), put
\[
  \beta_j=s_{i_1}\cdots s_{i_{j-1}}(\alpha_{i_j}).
\]
The weak pinning orients each root subgroup appearing in this ordered
list.  We use the following normalized Chevalley coordinates.  For simple
roots the coordinate is the one supplied by the split pinning.  For
non-simple roots we use the divided root vectors obtained from iterated
Chevalley brackets after inverting the non-zero structure constants in
the relevant rank-two root strings.  In the types covered here
the rank-two subsystems which enter the local calculation are
\(A_1\times A_1\), \(A_2\), and \(B_2=C_2\), so these constants are only
\(\pm1\) and \(\pm2\).  This normalization is defined over \(\Z[1/2]\),
hence over every field considered here.  The sign of each non-simple root
vector is then fixed by the weak pinning.

\begin{lemma}
\label{lem:normalized-chevalley-compatibility}
With the fixed split pinning, weak pinning, and normal-form reduced words,
the normalized Chevalley root subgroup coordinates may be chosen so
that, over \(\Z[1/2]\), the rank-two root-string conjugations of type
\(A_1\times A_1\), \(A_2\), and \(B_2=C_2\) have the following forms.
In type \(B_2\) one root coordinate has coefficient \(2\); this
unit is retained in the formula rather than normalized away.  The only
orientation changes in these rank-two identities are the displayed
opposite-coordinate signs.  These statements remain true after base
change to every perfect field of characteristic different from \(2\).
Over \(\R\), the comparison between these normalized coordinates and
the corresponding real characteristic coordinates is by positive scalar
rescaling on each root line, followed only by the opposite-coordinate
signs displayed in the rank-two identities.
\end{lemma}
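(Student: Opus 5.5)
The plan is to reduce to a finite check in the Chevalley group scheme over \(\Z[1/2]\), one rank-two root subsystem at a time. By the Chevalley commutator formula, in any split simply connected group the conjugation of a product of root-subgroup elements \(x_\beta(t)\) by the simple reflection representatives \(\dot s_\alpha\) involves only the roots in the rank-two subsystem spanned by \(\alpha\) and \(\beta\). Since the paper excludes \(G_2\), this subsystem is of type \(A_1\times A_1\), \(A_2\), or \(B_2=C_2\). I would therefore first fix a Chevalley basis compatible with the split pinning, so that \(\dot s_\alpha x_\beta(t)\dot s_\alpha^{-1}=x_{s_\alpha\beta}(c_{\alpha,\beta}t)\) with \(c_{\alpha,\beta}\in\{\pm1\}\). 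I would also record the commutator relations
\[
  [x_\alpha(a),x_\beta(b)]=\prod_{i,j>0}x_{i\alpha+j\beta}\bigl(C_{ij\alpha\beta}a^ib^j\bigr),
\]
whose structure constants here are \(\pm1,\pm2\).

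Next I would define the normalized coordinates. Each non-simple root \(\gamma\) is reached from a simple root along the fixed normal-form word, via the sequence \(\beta_j\) of the statement. I define the coordinate on \(U_\gamma\) by transporting the pinned coordinate along the \(\dot s\)'s and then dividing by the relevant structure constant. Only \(2\) needs inverting, so the definition works over \(\Z[1/2]\). The sign of each root vector is then adjusted to agree with the orientation prescribed by the weak pinning of Morel--Sawant \cite[Definition~3.17]{MorelSawant]}. Independence of choices is not needed: the lemma only asserts that the coordinates \emph{may be chosen} this way, and the normal-form words give one canonical path for each root.

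The case-by-case verification comes next.
\begin{itemize}[leftmargin=*]
  \item For \(A_1\times A_1\), the root subgroups commute, and conjugation by \(\dot s_\alpha\) only flips the sign of the \(U_{-\alpha}\)-coordinate.
  \item For \(A_2\), I would use the explicit \(SL_3\) matrices. The identity \(\dot s_1\dot s_2\dot s_1=\dot s_2\dot s_1\dot s_2\), applied to products \(x_{\alpha_1}(a)x_{\alpha_1+\alpha_2}(b)x_{\alpha_2}(c)\), yields the braid coordinate change \((a,b,c)\mapsto(c,\,ac-b,\,a)\) up to displayed signs.
  \item For \(B_2=C_2\), I would use \(Sp_4\) (the simply connected form). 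Here the long–short commutator has coefficient \(2\) on one root coordinate. I keep this coefficient as a unit in \(\Z[1/2]\) rather than absorbing it, exactly as the statement requires.
\end{itemize}
In each case I would list every sign that appears and check that it is an opposite-coordinate sign \(x_{-\gamma}\leftrightarrow x_\gamma\) coming from \(\dot s_\alpha^2=\alpha^\vee(-1)\). Base change to a perfect field with \(\operatorname{char}k\neq2\) is automatic, because all identities are polynomial identities over \(\Z[1/2]\).

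For the real comparison, the real characteristic coordinates (as used in \cite{LambertRabelo2026}) differ from the Chevalley ones by the normalization of root vectors in \(\mathfrak g(\R)\). I would compare against the Chevalley basis with respect to the Cartan involution. Each root line is rescaled by the positive factor \(\|E_\gamma\|\), and any residual sign is one of the displayed ones.

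The main obstacle I expect is \(B_2\). There one must track signs consistently with the weak-pinning orientation when non-simple roots are reached by different normal-form paths. The danger is that the divided coordinate for a long root could come out with a sign not of opposite-coordinate type. I would handle this by first computing everything explicitly in \(Sp_4\), and then using the weak-pinning equivalence to absorb any discrepancy into the root-vector sign choice.
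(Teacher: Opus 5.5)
Your overall frame matches the paper's: a Chevalley basis over $\Z[1/2]$, the rank-two commutator formulas, polynomial identities that base-change, and positive rescaling over $\R$. However, you have misidentified both the identities to be checked and the source of their signs, so the verification you describe does not prove the lemma.

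The root-string conjugations are conjugations by \emph{unipotent} root elements, $x_\alpha(s)x_\gamma(t)x_\alpha(-s)=\exp\bigl(t\,e^{s\,\mathrm{ad}\,e_\alpha}e_\gamma\bigr)$. They are not conjugations by $\dot s_\alpha$, which merely send $x_\beta(t)$ to $x_{s_\alpha\beta}(\pm t)$. The $SL_3$ braid change $(a,b,c)\mapsto(c,\pm b\pm ac,a)$ that you compute belongs to Lemma~\ref{lem:rank-two-determinants}, not to this lemma.

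The opposite-coordinate sign is $x^{\mathrm{op}}_{\alpha+\beta}(u)=x_{\alpha+\beta}(-u)$. It records $[e_\beta,e_\alpha]=-[e_\alpha,e_\beta]$ when the string is run from the other simple root. It has nothing to do with $x_{-\gamma}\leftrightarrow x_\gamma$ or with $\dot s_\alpha^2=\alpha^\vee(-1)$. The signs that actually occur are structure-constant signs, and they cannot be traced to $\dot s_\alpha^2$.

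The missing idea is the paper's normalization: define the non-simple root vectors of each rank-two subsystem by brackets, $e_{\alpha+\beta}=[e_\alpha,e_\beta]$ and $e_{2\alpha+\beta}=\tfrac12[e_\alpha,e_{\alpha+\beta}]$. Expanding $e^{s\,\mathrm{ad}\,e_\alpha}$ then gives three things:
\begin{enumerate}
\item the forward strings carry no signs;
\item the reversed strings acquire the antisymmetry sign $x_{\alpha+\beta}(-st)$;
\item the retained $2$ appears in $x_\alpha(s)x_{\alpha+\beta}(t)x_\alpha(-s)=x_{\alpha+\beta}(t)x_{2\alpha+\beta}(2st)$, which is a short--short conjugation, not the long--short one you name.
\end{enumerate}

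Your recipe ``transport along the $\dot s$'s, then divide by the structure constant'' is also inconsistent. Transport already yields $\pm$ a Chevalley basis vector. Dividing $\dot s_\alpha e_\beta\dot s_\alpha^{-1}=\pm e_{2\alpha+\beta}$ by $2$ would turn the two $B_2$ formulas into $x_{2\alpha+\beta}(\pm2s^2t)$ and $x_{2\alpha+\beta}(\pm4st)$, moving the $2$ to the wrong place.

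Nor is there one canonical path per root. A root occurs as $\beta_j$ in many normal-form words with different prefixes, so you must fix one chain per root, as the paper does.

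Finally, your explicit $Sp_4$ computation is worth doing, but it will not confirm the paper's third $B_2$ display as written. Since $[e_\alpha,e_{\alpha+\beta}]\neq0$, $\exp(te_\alpha-st\,e_{\alpha+\beta})$ does not split as $x_\alpha(t)x_{\alpha+\beta}(-st)$, and one gets
\[
  x_\beta(s)x_\alpha(t)x_\beta(-s)=x_\alpha(t)\,x_{\alpha+\beta}(-st)\,x_{2\alpha+\beta}(st^2).
\]
The factor $x_{2\alpha+\beta}(st^2)$ is omitted in that display and in the later repetitions of the identity, and it has to be carried along.
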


\begin{proof}
Work first in the universal split Chevalley group over
\(\mathcal R=\Z[1/2]\).  For a simple root the root vector is the pinned
one.  If \(\beta\) is non-simple, choose a saturated convex root-string
chain from a simple root to \(\beta\), and at each rank-two step divide
by the non-zero Chevalley structure constant.  In the allowed rank-two
types these constants are units of \(\mathcal R\).  The weak pinning fixes
the remaining signs, and we write \(x_\rho(t)\) for the resulting
coordinate on \(U_\rho\).

If the roots are strongly orthogonal, then
\[
  x_\alpha(s)x_\beta(t)x_\alpha(-s)=x_\beta(t).
\]

In type \(A_2\), choose the sign of the non-simple root coordinate so that
\(e_{\alpha+\beta}=[e_\alpha,e_\beta]\).  Then
\[
  x_\alpha(s)x_\beta(t)x_\alpha(-s)
  =
  x_\beta(t)x_{\alpha+\beta}(st),
\]
while the opposite string is
\[
  x_\beta(s)x_\alpha(t)x_\beta(-s)
  =
  x_\alpha(t)x_{\alpha+\beta}^{\mathrm{op}}(st),
  \qquad
  x_{\alpha+\beta}^{\mathrm{op}}(u):=x_{\alpha+\beta}(-u).
\]

In type \(B_2=C_2\), let \(\alpha\) be short and \(\beta\) long, and set
\[
  e_{\alpha+\beta}=[e_\alpha,e_\beta],
  \qquad
  e_{2\alpha+\beta}=\frac{1}{2}[e_\alpha,e_{\alpha+\beta}].
\]
The normalized conjugation identities are
\[
\begin{aligned}
  x_\alpha(s)x_\beta(t)x_\alpha(-s)
  &=
  x_\beta(t)x_{\alpha+\beta}(st)x_{2\alpha+\beta}(s^2t),\\
  x_\alpha(s)x_{\alpha+\beta}(t)x_\alpha(-s)
  &=
  x_{\alpha+\beta}(t)x_{2\alpha+\beta}(2st),\\
  x_\beta(s)x_\alpha(t)x_\beta(-s)
  &=
  x_\alpha(t)x_{\alpha+\beta}^{\mathrm{op}}(st),
  \qquad
  x_{\alpha+\beta}^{\mathrm{op}}(u):=x_{\alpha+\beta}(-u).
\end{aligned}
\]
These are the Chevalley commutator relations for pinned split groups
with the above divided coordinates \cite[Chapter~5]{Carter},
\cite[Chapter~8]{Springer}.  The coefficient \(2\) in the second \(B_2\)
identity comes from
\([e_\alpha,e_{\alpha+\beta}]=2e_{2\alpha+\beta}\), and is not absorbed
into the coordinate.  The notation
\(x_{\alpha+\beta}^{\mathrm{op}}(u)=x_{\alpha+\beta}(-u)\) records the
only sign change coming from reversing the corresponding bracket order.

Matsumoto's theorem reduces compatibility of the chosen normal-form words
to these rank-two identities.  The Chevalley product maps satisfy the
braid relations over \(\mathcal R\), so braid-equivalent root-subgroup
products define the same completed product morphism; hence the displayed
coordinate conventions are simultaneous for all words used here.  Since
the formulas are over \(\mathcal R\), they base-change to every
characteristic \(\ne2\) field.  Over \(\R\), the undivided real
characteristic coordinates differ from the divided coordinates by positive
scalars after the weak-pinning signs are fixed, so only the displayed
opposite-coordinate signs remain.
\end{proof}

\begin{lemma}
\label{lem:b2-normalized-leading}
Let \(\alpha\) be short and \(\beta\) long in a \(B_2=C_2\) rank-two
subsystem.  With
\[
  e_{\alpha+\beta}=[e_\alpha,e_\beta],
  \qquad
  e_{2\alpha+\beta}=\frac12[e_\alpha,e_{\alpha+\beta}],
\]
the elementary conjugations are
\[
\begin{aligned}
  x_\alpha(s)x_\beta(t)x_\alpha(-s)
  &=
  x_\beta(t)x_{\alpha+\beta}(st)x_{2\alpha+\beta}(s^2t),\\
  x_\alpha(s)x_{\alpha+\beta}(t)x_\alpha(-s)
  &=
  x_{\alpha+\beta}(t)x_{2\alpha+\beta}(2st),\\
  x_\beta(s)x_\alpha(t)x_\beta(-s)
  &=
  x_\alpha(t)x_{\alpha+\beta}^{\mathrm{op}}(st),
  \qquad
  x_{\alpha+\beta}^{\mathrm{op}}(u)=x_{\alpha+\beta}(-u).
\end{aligned}
\]
The coefficient \(2\) in the second identity occurs in a different root
coordinate.  It is not the leading unit of the one-dimensional normal factor of any
Bruhat cover which survives multiplication by
\((\htc(\gamma^\vee)+1)_\epsilon\eta\).  More precisely, the \(B_2\)
rank-two possibilities are
\[
\begin{array}{c|c|c|c}
\gamma & \gamma^\vee & \htc(\gamma^\vee)+1
& (\htc(\gamma^\vee)+1)_\epsilon\eta\\
\hline
\alpha & \alpha^\vee & 2 & 0\\
\beta & \beta^\vee & 2 & 0\\
\alpha+\beta & \alpha^\vee+2\beta^\vee & 4 & 0\\
2\alpha+\beta & \alpha^\vee+\beta^\vee & 3 & \eta .
\end{array}
\]
For the only non-zero row, \(\gamma=2\alpha+\beta\), the relevant normal
direction is supplied by the coordinate \(x_{2\alpha+\beta}(s^2t)\) in the
first identity, whose unit is \(s^2\) and hence has value \(1\) at the
point \(s=1\).
\end{lemma}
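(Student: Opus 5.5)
The plan has three parts: establish the three conjugation identities, compute the four rows of the table from coroot heights, and identify the normal direction for the single surviving row.

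\emph{The conjugation identities.} These are the specialization of Lemma~\ref{lem:normalized-chevalley-compatibility} to one \(B_2=C_2\) subsystem, so I will only re-derive the structure constants, working in \(\mathfrak{sp}_4\) or \(\mathfrak{so}_5\) over \(\Z[1/2]\). Put \(e_{\alpha+\beta}=[e_\alpha,e_\beta]\) and \(e_{2\alpha+\beta}=\tfrac12[e_\alpha,e_{\alpha+\beta}]\). Then
\[
  \operatorname{ad}(e_\alpha)^2e_\beta=2e_{2\alpha+\beta},\qquad
  \operatorname{ad}(e_\alpha)^3e_\beta=0 .
\]
The first identity follows by exponentiating:
\[
  \operatorname{Ad}(x_\alpha(s))e_\beta
  =e_\beta+se_{\alpha+\beta}+\tfrac{s^2}{2}\cdot2e_{2\alpha+\beta}.
\]
This yields \(x_\beta(t)x_{\alpha+\beta}(st)x_{2\alpha+\beta}(s^2t)\), because the roots \(\beta,\alpha+\beta,2\alpha+\beta\) span an abelian unipotent subgroup. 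The second identity uses \([e_\alpha,e_{\alpha+\beta}]=2e_{2\alpha+\beta}\) together with \(\operatorname{ad}(e_\alpha)^2e_{\alpha+\beta}=0\). The third uses \([e_\beta,e_\alpha]=-e_{\alpha+\beta}\) and \(\operatorname{ad}(e_\beta)^2e_\alpha=0\), since \(\alpha+2\beta\) is not a root. The sign produced in the third identity is exactly the one recorded by \(x^{\mathrm{op}}_{\alpha+\beta}\).

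\emph{The table.} Let \(\alpha^\vee,\beta^\vee\) be the simple coroots, with \(\alpha^\vee\) long in the dual system. For a short \(\alpha\),
\[
  \gamma^\vee=\frac{2\gamma}{(\gamma,\gamma)} .
\]
Normalizing \((\alpha,\alpha)=1\) and \((\beta,\beta)=2\) gives \(\alpha^\vee=2\alpha\) and \(\beta^\vee=\beta\). Hence
\[
  (\alpha+\beta)^\vee=2(\alpha+\beta)=\alpha^\vee+2\beta^\vee,\qquad
  (2\alpha+\beta)^\vee=2\alpha+\beta=\alpha^\vee+\beta^\vee .
\]
The heights are therefore \(1,1,3,2\), so \(\htc(\gamma^\vee)+1\) equals \(2,2,4,3\). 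Now apply the convention for \(n_\epsilon\eta\) from the Main theorem: \(n_\epsilon\eta=0\) for even \(n\), and \(n_\epsilon\eta=\eta\) for odd \(n\). For odd \(n\) this rests on \(\epsilon\eta=\eta\), since \(h\eta=0\). This produces the last column, so only \(\gamma=2\alpha+\beta\) survives.

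\emph{The claim about the coefficient \(2\).} The factor \(2\) in the second identity multiplies the \(x_{2\alpha+\beta}\)-coordinate only when it comes from conjugating \(x_{\alpha+\beta}\). I will show this never supplies the normal factor of a surviving cover. For the cover with \(\gamma=2\alpha+\beta\), the relevant rank-two word is \(s_\alpha s_\beta s_\alpha\) versus its deletion. The one-dimensional normal factor of \(C_{\pi(v)}\subset\overline{C_{\pi(w)}}\) is parametrized by the coefficient of the new root \(2\alpha+\beta\) when \(x_\alpha(s)\) is moved past \(x_\beta(t)\). By the first identity this coefficient is \(s^2t\), and its unit part is \(s^2\), which equals \(1\) at \(s=1\) (a square, hence orientation-trivial). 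The coefficient \(2st\) arises only for covers whose deleted reflection is \(\alpha+\beta\), and those are killed by the factor \(4_\epsilon\eta=0\).

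The main obstacle is this last identification: verifying, across the two possible deletion positions in the length-three \(B_2\) words, that the normal coordinate of the surviving cover is read off from the first identity and not the second. I would settle it by writing both reduced words of length three explicitly and tracking which root coordinate degenerates.
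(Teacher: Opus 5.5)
Your derivations of the first two identities and of the table agree with the paper's proof. The paper also attributes \(s^2t\) to the second-order term \(\tfrac{s^2}{2}[e_\alpha,[e_\alpha,te_\beta]]\) and \(2st\) to \([e_\alpha,te_{\alpha+\beta}]\), and reads the last column off \(r_\epsilon\eta\).

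\textbf{The third identity.} This is the step that fails. You correctly get \(\operatorname{Ad}(x_\beta(s))e_\alpha=e_\alpha-se_{\alpha+\beta}\), hence \(x_\beta(s)x_\alpha(t)x_\beta(-s)=\exp(te_\alpha-st\,e_{\alpha+\beta})\). Unlike in the first identity, however, these two root vectors do not commute: \([e_\alpha,e_{\alpha+\beta}]=2e_{2\alpha+\beta}\), which is central in the positive nilpotent subalgebra. Truncated Baker--Campbell--Hausdorff then gives
\[
  x_\beta(s)x_\alpha(t)x_\beta(-s)
  =x_\alpha(t)\,x^{\mathrm{op}}_{\alpha+\beta}(st)\,x_{2\alpha+\beta}(st^2).
\]
So the displayed third identity is false, and reordering the first two factors does not remove the central term.

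You can see this from the statement alone. The left side is a homomorphism in \(t\). By the second identity, however, the product of \(x_\alpha(t_i)x_{\alpha+\beta}(-st_i)\) for \(i=1,2\) is \(x_\alpha(t_1+t_2)x_{\alpha+\beta}(-s(t_1+t_2))x_{2\alpha+\beta}(2st_1t_2)\). The paper's proof only cites the commutator formulas at this point, so the discrepancy lies in the statement itself.

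What you can prove is the corrected identity, together with the observation that the extra factor is harmless. Its coefficient is \(s\), with no \(2\). It also lies on the \(2\alpha+\beta\) line, not on the relevant normal line \(s_\beta\alpha=\alpha+\beta\). So the table and the remaining claims are unaffected.

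\textbf{The coefficient \(2\).} Your argument here takes a different route from the paper's and is left unfinished. The paper uses the reflected-root rule: when the current deleted root \(\delta\) crosses \(\rho\), the normal direction is the line of \(s_\rho\delta\). In the second identity \(\delta=\alpha+\beta\), \(\rho=\alpha\) and \(s_\alpha(\alpha+\beta)=\alpha+\beta\). So the normal coordinate has coefficient \(1\), and the \(2\) sits on another line whether or not the cover survives.

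You instead assert that the covers producing \(2st\) die, and you defer the check. In rank two this closes at once by \(W\)-invariance of root length: a short deleted direction stays in \(\{\alpha,\alpha+\beta\}\), whose coroot heights \(1\) and \(3\) are odd.

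The cover you name is off, though. Deleting a letter of \(s_\alpha s_\beta s_\alpha\) only produces covers with \(\gamma\in\{\alpha,\alpha+\beta\}\). With \(w=w's_\gamma\), the surviving covers are \(s_\beta s_\alpha\gtrdot s_\alpha\) and \(s_\beta s_\alpha s_\beta\gtrdot s_\alpha s_\beta\), each obtained by deleting the first \(s_\beta\).

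More importantly, the ``killed'' argument does not give the coordinate-separation statement that Lemmas~\ref{lem:rank-two-determinants} and~\ref{lem:root-string-monomial} invoke in arbitrary rank. There, a short deleted direction can pass through such a step and still survive. In \(C_3\), for \(s_2s_3s_2s_1\gtrdot s_3s_2s_1\) the deleted direction runs \(\alpha_2\to\alpha_2+\alpha_3\to\alpha_2+\alpha_3\to\alpha_1+\alpha_2+\alpha_3\), and \(\htc\bigl((\alpha_1+\alpha_2+\alpha_3)^\vee\bigr)=4\), so this cover survives. The paper's formulation handles this case; yours does not.
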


\begin{proof}
The three formulas are the \(B_2\) Chevalley commutator formulas in the
coordinates of Lemma~\ref{lem:normalized-chevalley-compatibility}.  The
factor \(s^2t\) in the first identity comes from
\[
  \frac{s^2}{2}[e_\alpha,[e_\alpha,te_\beta]]
  =
  s^2t\,e_{2\alpha+\beta},
\]
whereas the second identity has no second-order exponential factor and
therefore keeps the coefficient
\([e_\alpha,te_{\alpha+\beta}]=2t e_{2\alpha+\beta}\).  This is the
source of the coefficient \(2\).

In a rank-two step the relevant one-dimensional normal direction is
the root line indexed by the reflected root \(s_\rho\delta\), where
\(\delta\) is the current deleted root and \(\rho\) is the simple root
being crossed.  The first formula is the case \(\delta=\beta\) and
\(\rho=\alpha\); here \(s_\alpha\beta=2\alpha+\beta\), so the relevant
normal coordinate is \(x_{2\alpha+\beta}(s^2t)\), with unit \(1\) at the
closed point of the completed chart.  The third formula is the case \(\delta=\alpha\) and
\(\rho=\beta\); it gives the relevant
\(\alpha+\beta\)-coordinate with unit \(1\), but the corresponding
\(t\)-adic order is \(4\), hence the final factor is \(4_\epsilon\eta=0\).

The second formula has \(\delta=\alpha+\beta\) and \(\rho=\alpha\).  Since
\(s_\alpha(\alpha+\beta)=\alpha+\beta\), the relevant normal direction is
still the \(\alpha+\beta\)-coordinate, whose coefficient is \(1\).  The term
\(x_{2\alpha+\beta}(2st)\) lies in a different root coordinate.
Thus the only displayed coefficient \(2\) never becomes the leading unit
of a non-zero motivic boundary coefficient.  The table is the list of
positive \(B_2\) roots and their coroot heights, and the last column uses
\(r_\epsilon\eta=0\) for even \(r\) and \(r_\epsilon\eta=\eta\) for odd
\(r\).
\end{proof}

Thus, if \(u_\beta\colon\A^1\to U_\beta\) denotes the normalized root
coordinate, the coordinate attached to a root \(\beta_j\) in the
horizontal word \(\mathbf w\) is allowed to be
\(u_{\beta_j}(\varepsilon_{\mathbf w,j}x_j)\) with
\(\varepsilon_{\mathbf w,j}\in\{\pm1\}\), where the sign is the one fixed
by the weak pinning and the horizontal cell orientation.  The associated
oriented Chevalley coordinate chart is
\[
  \Phi_{\mathbf w}\colon \A^r\longrightarrow C_w,\qquad
  (x_1,\ldots,x_r)\longmapsto
  u_{\beta_1}(\varepsilon_{\mathbf w,1}x_1)\cdots
  u_{\beta_r}(\varepsilon_{\mathbf w,r}x_r)\dot w P_\Theta .
\]
This is the standard isomorphism
\(\prod_{\beta\in\Phi^+\cap w(\Phi^-)}U_\beta\simeq C_w\).  We write
\(\Phi_w\) for the chart associated with the fixed
horizontal word for \(w\), and \(\Phi_{\widehat v_I}\) for the chart
associated with the word obtained from the fixed word of \(v\) by deleting
the \(I\)-th reflection.  For \(G/B\), the orientation is not taken
from this same horizontal word.  It is the determinant-complement
orientation obtained from the Morel--Sawant normal expression for the
complementary element \(w_0w\).

For \(G/P_\Theta\), let \(\pi\colon G/B\to G/P_\Theta\) be the natural
projection.  We fix the partial orientation directly from the minimal
full-flag lift.  If \(w\in W^\Theta\), then
\[
  \pi\colon C_w(G/B)\longrightarrow C_w(G/P_\Theta)
\]
is an isomorphism.  Put
\[
  V_w=\ker(d\pi)|_{C_w(G/B)} .
\]
Minimality gives \(w(\Phi_\Theta^+)\subset \Phi^+\), hence
\[
  V_w\cong
  \bigoplus_{\alpha\in\Phi_\Theta^+}\mathfrak g_{-w\alpha}.
\]
We order these vertical root lines by a fixed convex order on
\(\Phi_\Theta^+\), transported by \(w\), and orient them by the weak
pinning.  The differential of \(\pi\) gives
\[
  0\longrightarrow V_w
  \longrightarrow \nu_{C_w(G/B)}
  \longrightarrow \pi^*\nu_{C_w(G/P_\Theta)}
  \longrightarrow 0.
\]
The partial orientation is the quotient determinant orientation
defined by
\[
  \det\nu_{C_w(G/B)}
  \cong
  \det(V_w)\otimes
  \pi^*\det\nu_{C_w(G/P_\Theta)} .
\]

\begin{lemma}
\label{lem:thom-orientation-charts}
For the full flag, the fixed horizontal chart \(\Phi_w\) and the
Morel--Sawant normal expression for \(w_0w\) determine compatible
orientations by the determinant identity
\[
  \det T(C_w)\otimes\det\nu_{C_w}\cong\det T(G/B)|_{C_w}.
\]
With this convention, the determinant sign of a deleted face gives the
same change of trivialization of the normal determinant line.  In
particular, a coordinate change with normal Jacobian determinant \(a\)
acts on the Thom class by the Milnor--Witt unit \(\langle a\rangle\).
\end{lemma}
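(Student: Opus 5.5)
The proof has two parts. First, the determinant identity fixes the normal orientation once the horizontal one is chosen. Second, a change of trivialization of an oriented line bundle acts on the Thom class through the class of its determinant in \(\GW\).

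\textbf{Step 1: existence of the complementary orientation.} Over \(C_w\simeq\A^{\ell(w)}\) the tangent bundle \(T(G/B)|_{C_w}\) is trivial. As \(T_e$-representations of the Borel, its root-line decomposition is \(\bigoplus_{\beta\in\Phi^+}\mathfrak g_{-\beta}\), transported by \(w\). The inversion set \(\Phi^+\cap w(\Phi^-)\) gives the lines spanning \(T(C_w)\). The complementary set corresponds to the inversion set of \(w_0w\), and these lines span the normal bundle \(\nu_{C_w}\).

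The fixed horizontal word orders the first set. Any reduced word for \(w_0w\) orders the second. The weak pinning of \cite[Definition~3.17]{MorelSawant} orients every root line. Their tensor product is therefore a trivialization of \(\det T(G/B)|_{C_w}\).

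We choose the Morel--Sawant normal expression, with the sign \(\pm1\) on its first factor, so that this product agrees with the global orientation of \(\det T(G/B)\) coming from the ordered positive roots. This is possible because the determinant line is one-dimensional. By Matsumoto's theorem and the braid compatibility in Lemma~\ref{lem:normalized-chevalley-compatibility}, the choice is independent of the reduced word for \(w_0w\) up to the rank-two sign changes. These sign changes are already recorded there, so the orientation of \(\det\nu_{C_w}\) is well defined.

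\textbf{Step 2: deleted faces.} Let \(\Phi_{\widehat v_I}\) be the chart obtained by deleting the \(I\)-th letter. In the determinant identity, moving the deleted root line across the \(I-1\) preceding tangent lines changes the trivialization of \(\det T(C_v)\) by \((-1)^{I-1}\), or \((-1)^I\) depending on the convention. The total determinant is fixed. Hence the induced trivialization of \(\det\nu\) changes by the same sign. This is the sense in which "the determinant sign of a deleted face gives the same change".

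\textbf{Step 3: action on the Thom class.} Let \(\phi\) be an automorphism of a trivial rank-\(c\) normal bundle, with determinant \(a\in\mathcal O^\times\). The Thom class of \(\A^c\) in \(\KMW\)-cohomology is functorial. Its pullback under a linear map \(g\in GL_c\) is multiplication by \(\langle\det g\rangle\). This is the standard computation: reduce to elementary and diagonal matrices. Elementary matrices are \(\A^1\)-homotopic to the identity and act trivially. For \(\operatorname{diag}(a,1,\dots,1)\) one uses \(\deg^{\A^1}(x\mapsto ax)=\langle a\rangle\) on \(\mathbb P^1/\infty\) together with suspension.

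For a non-linear coordinate change, replace it by its normal Jacobian along the zero section. This is legitimate because of the deformation to the normal bundle, and because of homotopy invariance of Thom spaces, which gives \(\mathrm{Th}(\nu)\simeq\) the homotopy cofiber along the zero section. This yields the factor \(\langle a\rangle\).

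\textbf{Main obstacle.} The delicate point is Step 1's well-definedness. We must show that different reduced words for \(w_0w\) give the same normal orientation. I would handle this by checking the rank-two moves of types \(A_1\times A_1\), \(A_2\) and \(B_2\) against the determinant identity using Lemma~\ref{lem:normalized-chevalley-compatibility}. In each case the only sign appearing is the displayed opposite-coordinate sign. It enters simultaneously in the tangent and total determinants, so it cancels.
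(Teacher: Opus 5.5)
Your proposal is correct and follows essentially the same route as the paper: the normal orientation is defined as the determinant complement of the horizontal orientation inside the fixed top orientation, the deletion sign passes to the normal line through that fixed top determinant, and the Thom-class action is \(\langle a\rangle\). The paper simply cites Morel--Sawant's Definitions~2.15, 2.17 and Lemma~2.23 for the last point, whereas you sketch it via elementary and diagonal matrices and deformation to the normal bundle.

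Your ``main obstacle'' is not actually an obstacle. Once you fix the sign so that \(\det(H_w)\otimes\det(N_w)\) equals the fixed top orientation, the normal orientation is forced to be \(\det T(G/B)|_{C_w}\otimes\det T(C_w)^{-1}\). It is therefore independent of the reduced word for \(w_0w\) by construction, so the rank-two cancellation argument is unnecessary. That argument is also slightly muddled, since changing the normal word never touches the tangent determinant.
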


\begin{proof}
For \(G/B\), Morel--Sawant identify the normal bundle of a Bruhat stratum
as a constant sum of root lines and show that a weak pinning together with
a normal reduced expression orders and orients these lines
\cite[Proposition~3.13, Definition~3.17, Remark~3.19]{MorelSawant}.  Their
Remark~3.19 also makes clear that two expressions are involved: the
horizontal expression for \(w\), which identifies the cell generator, and
the normal expression for the complementary element \(w_0w\), which
orients the normal determinant.

Let \(H_w\) be the ordered list of tangent root lines coming from the
fixed horizontal word for \(w\), and let \(N_w\) be the ordered
list of normal root lines coming from the Morel--Sawant normal expression
for \(w_0w\).  The weak pinning orients every root line.  We fix the top
determinant orientation of \(G/B\) once and for all by the weak pinning and
a reduced expression of \(w_0\), and choose the normal expression \(N_w\)
within its braid-equivalence class so that
\[
  \det(H_w)\otimes\det(N_w)=\det T(G/B)|_{C_w}.
\]
This is precisely the determinant-line comparison
\[
  \det T(C_w)\otimes\det\nu_{C_w}
  \cong
  \det T(G/B)|_{C_w}.
\]
Braid changes of the normal expression alter both sides by the
Morel--Sawant weak-pinning equivalence, so the resulting orientation
is independent of the chosen compatible representative.

Let \(v\gtrdot w\).  In the fixed face chart for \(v\), the
horizontal determinant is
\[
  dx_1\wedge\cdots\wedge dx_{\ell(v)} .
\]
Deleting the \(I\)-th coordinate identifies the oriented tangent
determinant of the face with
\[
  (-1)^I
  dx_1\wedge\cdots\wedge\widehat{dx_I}\wedge\cdots
  \wedge dx_{\ell(v)} .
\]
Since the top determinant orientation is fixed, passing from tangent
determinants to normal determinants via
\(\det T(C_\bullet)\otimes\det\nu_{C_\bullet}=\det T(G/B)\) introduces the
same scalar in the induced trivialization of the normal determinant line.
Thus the ordinary deletion sign gives the same Milnor--Witt unit in the
Thom isomorphism.

In an \'{e}tale neighborhood of a codimension-one inclusion of Bruhat strata the
Bott--Samelson product coordinates split the completed local ring into
face coordinates and one normal coordinate.  The determinant line of the
normal bundle is therefore the determinant of the normal-coordinate block.
Changing coordinates compatible with the orientation multiplies this determinant
trivialization by the determinant of the corresponding normal Jacobian.
The Thom isomorphism for an oriented vector bundle records such a change
by \(\langle a\rangle\) when the
determinant is \(a\in k^\times\) \cite[Definitions~2.15 and~2.17,
Lemma~2.23]{MorelSawant}.  This is the convention used in the local
attaching-map calculation.
\end{proof}

\begin{lemma}
\label{lem:bruhat-a1-cellular}
With these Bruhat orientations, the oriented cellular chain groups are
\[
  \Ccell_i(X_\Theta) \cong
  \bigoplus_{\substack{w\in W^\Theta\\ \length(w)=d-i}} \KMW_i
  \quad (i\geq 1),
  \qquad
  \Ccell_0(X_\Theta)\cong \Z.
\]
Here \(d=\dim X_\Theta\).  Thus the cellular \(\A^1\)-complex is indexed
by codimension, as in Morel--Sawant, rather than by the dimension of the
Bruhat cell.
\end{lemma}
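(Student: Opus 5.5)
The plan is to unwind Morel--Sawant's definition of the oriented cellular chain complex and identify each term with a Bruhat stratum. For a smooth scheme with an increasing filtration by closed subschemes \(\Omega_0\subset\Omega_1\subset\cdots\) (here \(\Omega_i\) is the complement of the union of cells of codimension \(>i\)), Morel--Sawant set
\[
  \Ccell_i(X)=\mathbf H^{\A^1}_i\bigl(\Omega_i/\Omega_{i-1}\bigr).
\]
For the Bruhat filtration I take \(\Omega_i\) to be the open union of cells \(C_w\) with \(\length(w)\geq d-i\). The successive differences \(\Omega_i\setminus\Omega_{i-1}\) are then disjoint unions of the cells \(C_w\) of codimension exactly \(i\). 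Each such cell is closed in \(\Omega_i\), because its closure in \(X_\Theta\) meets \(\Omega_i\) only in \(C_w\) (Bruhat order). So I must show that \(\Omega_i/\Omega_{i-1}\) is a wedge of Thom spaces of trivial rank-\(i\) bundles over affine spaces.

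First, homotopy purity (Morel--Voevodsky) gives
\[
  \Omega_i/\Omega_{i-1}\simeq \bigvee_{\length(w)=d-i}\mathrm{Th}\bigl(\nu_{C_w}\bigr).
\]
Since \(C_w\simeq\A^{d-i}\) is \(\A^1\)-contractible and \(\nu_{C_w}\) is a constant sum of root lines (Morel--Sawant, Proposition~3.13), each Thom space is \(\A^1\)-equivalent to \(\A^i/(\A^i\setminus0)\simeq(\mathbb P^1)^{\wedge i}\). The isomorphism depends on a trivialization of \(\det\nu_{C_w}\).

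Second, I fix that trivialization using the orientation conventions already set up. For \(G/B\) I use Lemma~\ref{lem:thom-orientation-charts}: the horizontal chart together with the normal expression for \(w_0w\). For \(G/P_\Theta\) I use the quotient determinant orientation built from \(V_w\) and the minimal full-flag lift, which makes \(\pi\colon C_w(G/B)\to C_w(G/P_\Theta)\) an isomorphism. With these choices the identification is canonical. Morel's computation \(\mathbf H^{\A^1}_i((\mathbb P^1)^{\wedge i})\cong\KMW_i\) for \(i\geq1\) then gives one copy of \(\KMW_i\) per cell of codimension \(i\).

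Third, for \(i=0\) the only codimension-zero cell is the open big cell \(C_{w_0}\) (respectively its image in \(X_\Theta\)). Here \(\Omega_0=C_{w_0}\simeq\A^d\), so \(\Ccell_0=\mathbf H^{\A^1}_0(\A^d)=\Z\).

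The main obstacle is the second step in the partial flag case. I need the orientation of \(\nu_{C_w(G/P_\Theta)}\) to be a genuine trivialization of the determinant line, and I need it to be compatible with the Thom isomorphism used in Morel--Sawant's \emph{oriented} cellular structure. I would check this via the exact sequence \(0\to V_w\to\nu_{C_w(G/B)}\to\pi^*\nu_{C_w(G/P_\Theta)}\to0\), noting that \(V_w\) is a constant sum of oriented root lines. I also need Morel--Sawant's requirement that the cells be \emph{split} with trivial normal bundles; this follows since \(C_w\simeq\A^{\length(w)}\) and the root-line decomposition is \(T\)-equivariant.
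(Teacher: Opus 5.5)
Your proposal is correct and takes essentially the same route as the paper. Both arguments use the open filtration \(\Omega_i\) by cells of length \(\geq d-i\), whose strata are the disjoint unions of codimension-\(i\) affine cells. Both identify each stratum's Thom space with \(\KMW_i\), using the chart orientations of Lemma~\ref{lem:thom-orientation-charts} and the quotient determinant orientation for partial flags, and both take \(\Z\) in degree zero from the open big cell. You make explicit the homotopy-purity and \((\mathbb P^1)^{\wedge i}\) steps that the paper compresses. One small slip: you call Morel--Sawant's filtration one by closed subschemes, but it is by open subschemes, as your own definition of \(\Omega_i\) already shows.
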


\begin{proof}
The Bruhat decomposition identifies each stratum \(C_w\) with an affine
space over \(k\).  The closure relation is controlled by the Bruhat order,
so the closed Schubert skeleta give an affine paving.  Equivalently, if
\(d=\dim X_\Theta\), the open filtration
\[
  \Omega_i=X_\Theta\setminus
  \bigcup_{\length(w)\le d-i-1}\overline{C_w}
\]
has strata
\[
  \Omega_i-\Omega_{i-1}
  =
  \coprod_{\length(w)=d-i} C_w .
\]
This is a Morel--Sawant cellular structure indexed by codimension.  The
Chevalley coordinates trivialize the determinant of the normal
bundle of each stratum in the sense of
Lemma~\ref{lem:thom-orientation-charts}; for partial flags, the quotient
determinant orientation just fixed gives the corresponding normal
orientation.  The Thom isomorphism in the oriented cellular construction identifies a
codimension-\(i\) affine stratum with \(\KMW_i\) for \(i>0\), while the
big cell contributes \(\Z\) in degree zero.
\end{proof}

\begin{remark}
The choice of orientation is not cosmetic.  Changing the reduced word or
the root subgroup coordinates changes the signs of the cellular
differential.  The normal-form machinery is precisely what makes these
signs computable from the split root datum.
\end{remark}

\section{The integral Bruhat boundary matrix}
\label{sec:real-boundary}

Let \(A_i^\Theta\) be the free abelian group generated by the Bruhat cells
of dimension \(i\):
\[
  A_i^\Theta = \bigoplus_{\substack{w\in W^\Theta\\ \length(w)=i}}\Z[w].
\]
The integral boundary map is denoted
\[
  \delta_i^\Theta\colon A_i^\Theta\longrightarrow A_{i-1}^\Theta .
\]
For a Bruhat cover \(w\gtrdot w'\), write the coefficient of \([w']\) in
\(\delta_i^\Theta[w]\) as \(c_\Theta(w,w')\).  When \(k=\R\), this is the
ordinary cellular boundary of \(X_\Theta(\R)\).

\begin{proposition}
\label{prop:lr-boundary}
Fix a reduced expression \(w=s_1\cdots s_i\).  Suppose the Bruhat cover
\(w'\) is obtained, up to the fixed normal form of \(w'\), by deleting the
\(I\)-th reflection:
\[
  w'=s_1\cdots \widehat{s_I}\cdots s_i .
\]
Let \(\gamma\) be the root such that \(w=w's_\gamma\).  Then
\[
  c(w,w')
  =
  (-1)^I\,
  \deg_{\mathrm{coord}}\!\left(\Phi_{w'}^{-1}\circ \Phi_{\widehat w_I}\right)
  \bigl(1+(-1)^{\htc(\gamma^\vee)}\bigr).
\]
In particular \(c(w,w')\in\{0,\pm 2\}\).  By the chain-level projection
theorem for partial flags, \(c_\Theta(w,w')\) is
obtained from the maximal-flag boundary by restricting to the minimal
coset representatives \(W^\Theta\) and projecting back to that span.
Here \(\htc(\gamma^\vee)\) means the height of the coroot in the dual root
system, with the convention of \cite{LambertRabelo2026}.
\end{proposition}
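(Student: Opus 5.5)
The plan is to prove Proposition~\ref{prop:lr-boundary} by computing the ordinary cellular incidence number of the real CW structure on \(X_\Theta(\R)\) directly, using the Bott--Samelson (Chevalley product) charts of Section~\ref{sec:bruhat-cells}. I will do the full flag \(G/B\) first and deduce the partial flag case at the end.

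\emph{Step 1: reduce to a local degree at a codimension-one face.} The incidence number \(c(w,w')\) of the \(i\)-cell \(C_w(\R)\) on the \((i-1)\)-cell \(C_{w'}(\R)\) is the local degree of the attaching map near a generic point of \(C_{w'}(\R)\).

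To compute it, I use the completed product map
\[
  \widetilde\Phi_{\mathbf w}\colon (x_1,\ldots,x_i)\longmapsto u_{\beta_1}(x_1)\cdots u_{\beta_i}(x_i)\dot wB .
\]
I extend the \(I\)-th factor to the full \(SL_2\)-orbit \(\mathbb P^1\), i.e.\ a Bott--Samelson coordinate. Sending the \(I\)-th coordinate to \(\infty\) lands on the stratum parametrized by the deleted word, \(\Phi_{\widehat w_I}\).

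Near \(x_I=\infty\), I write \(x_I=1/t\). Using the rank-one identity
\[
  u_\beta(1/t)\,\dot s_\beta = u_{-\beta}(-t)\,\alpha^\vee(\ldots)\,u_\beta(\ldots),
\]
I commute the torus and opposite-root factor past \(u_{\beta_{I+1}},\ldots,u_{\beta_i}\). The coordinates \(x_{I+1},\ldots,x_i\) are then rescaled by characters of \(\gamma^\vee(t)\), where \(\gamma\) is the root with \(w=w's_\gamma\). This expresses the cell near the face as (face chart) \(\times\) (normal coordinate \(t\)).

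\emph{Step 2: identify the factors.} There are three contributions.

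\begin{enumerate}[label=(\roman*)]
\item Reordering the tangent frame so that \(dt\) comes first gives the sign \((-1)^I\); this is the deletion sign of Lemma~\ref{lem:thom-orientation-charts}.
\item Comparing the deleted-word chart with the fixed normal-form chart of \(w'\) contributes \(\deg_{\mathrm{coord}}(\Phi_{w'}^{-1}\circ\Phi_{\widehat w_I})\). This is a polynomial automorphism of \(\R^{i-1}\) built from the rank-two identities of Lemma~\ref{lem:normalized-chevalley-compatibility}, so its degree is \(\pm1\). By Matsumoto's theorem it suffices to track rank-two moves, and the opposite-coordinate signs there are the only orientation changes.
\item The two ends \(t\to0^\pm\) glue the two half-cells onto the face. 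The torus element \(\gamma^\vee(t)\) acts on the face coordinates by \(t^{\langle\beta,\gamma^\vee\rangle}\). Passing from \(t>0\) to \(t<0\) multiplies the product of these characters by \((-1)^{N}\). Up to the fixed orientation conventions, \(N\equiv \htc(\gamma^\vee)+1\), so the face is covered by the two sheets with degrees \(1\) and \((-1)^{\htc(\gamma^\vee)}\). Summing gives \(1+(-1)^{\htc(\gamma^\vee)}\in\{0,2\}\), and hence \(c(w,w')\in\{0,\pm2\}\).
\end{enumerate}

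\emph{Main obstacle.} The hardest step is (iii): computing the sign of the Jacobian of \(\mathrm{Ad}(\gamma^\vee(-1))\) on the face coordinates, and showing that its parity is governed by \(\htc(\gamma^\vee)\). My first attempt would be to write \(\gamma^\vee=\sum n_j\alpha_j^\vee\). The determinant of \(\gamma^\vee(-1)\) on \(\bigoplus_{\beta\in\Phi^+\cap w'(\Phi^-)}\mathfrak g_\beta\) is \((-1)^{\sum_\beta\langle\beta,\gamma^\vee\rangle}\). I would reduce this sum modulo \(2\) to \(\langle 2\rho_{w'},\gamma^\vee\rangle\), compare it with \(\langle2\rho,\gamma^\vee\rangle=2\htc(\gamma^\vee)\), and use the cover condition to account for the remaining roots. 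I expect this bookkeeping to need the convention of \cite{LambertRabelo2026} on coroot heights to fix the off-by-one.

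\emph{Step 3: partial flags.} For \(w,w'\in W^\Theta\), the projection \(\pi\colon C_w(G/B)\to C_w(G/P_\Theta)\) is an isomorphism, and \(\pi\) is cellular. So \(\pi_*\) on cellular chains sends \([w]\) to \([w]\) for \(w\in W^\Theta\), and sends other cells to \(0\) by dimension drop. Since \(\pi_*\) is a chain map, \(c_\Theta(w,w')\) equals the full-flag coefficient restricted to \(W^\Theta\) and projected onto that span. This is the stated chain-level projection.
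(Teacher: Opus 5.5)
Your route differs from the paper's. The paper proves this proposition purely by citation to Rabelo--San Martin and Lambert--Rabelo; you instead try to rederive the incidence numbers from Bott--Samelson charts. The outline is reasonable: two sheets \(t\to0^{\pm}\) over the face, the deletion sign, and the coordinate-change degree. Your Step 3 via the cellular chain map \(\pi_*\) is a correct proof of the restrict-and-project statement.

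However, step (iii) carries the whole content of the factor \(1+(-1)^{\htc(\gamma^\vee)}\), and it is not established; the computation you propose for it would fail. In Step 1 you correctly note that only \(x_{I+1},\ldots,x_i\) are rescaled. Yet in the ``main obstacle'' you take the determinant of \(\gamma^\vee(-1)\) over all of \(\Phi^+\cap w'(\Phi^-)\). That exponent is \(\langle\rho-w'\rho,\gamma^\vee\rangle=\htc(\gamma^\vee)-\htc(w'^{-1}\gamma^\vee)\), and its parity is not controlled by \(\htc(\gamma^\vee)\). A counterexample already occurs in \(SL_3\) with \(w=s_1s_2\):
\begin{enumerate}
\item[(a)] the cover \(w'=s_2\), \(\gamma=\alpha_1+\alpha_2\) gives \(\langle\alpha_2,\alpha_1^\vee+\alpha_2^\vee\rangle=1\);
\item[(b)] the cover \(w'=s_1\), \(\gamma=\alpha_2\) gives \(\langle\alpha_1,\alpha_2^\vee\rangle=-1\).
\end{enumerate}
Both are odd, while the actual coefficients are \(\pm2\) and \(0\) (see the table in Section~\ref{sec:sl3-example}). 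So no global orientation convention can rescue this choice. Moreover, comparing with \(\langle2\rho,\gamma^\vee\rangle=2\htc(\gamma^\vee)\) carries no information modulo \(2\).

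The missing idea is to keep only the suffix. Let \(\alpha\) be the simple root of \(s_I\) and \(v=s_{I+1}\cdots s_i\), so that \(\gamma=v^{-1}\alpha\).
\begin{enumerate}
\item[(1)] Write the suffix as \(\prod_{j>I}u_{\mu_j}(x_j)\dot v\), where \(\mu_j\) runs over \(\Phi^+\cap v(\Phi^-)\). Pushing \(\alpha^\vee(1/t)\) to the right through it gives \(x_j=c_j\,t^{\langle\mu_j,\alpha^\vee\rangle}\) with face coordinates \(c_j\). The leftover torus element \(\dot v\,\gamma^\vee(1/t)\) is absorbed into \(B\).
\item[(2)] The factor \(x_\alpha(-t)\in B\) only induces a coordinate change of \(C_v\) that is the identity at \(t=0\).
\item[(3)] Hence the total exponent is
\(N=\langle\rho-v\rho,\alpha^\vee\rangle=1-\langle\rho,v^{-1}\alpha^\vee\rangle=1-\htc(\gamma^\vee)\).
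This is an exact identity, equivalent to the telescoping sum in Lemma~\ref{lem:root-string-monomial}. No external coroot-height convention is needed to fix an off-by-one.
\item[(4)] Since \(dx_I=-t^{-2}\,dt\) has constant sign and the two sheets have opposite outward normals, their contributions are \(\epsilon\) and \(-(-1)^N\epsilon\). Their sum is \(\epsilon\bigl(1+(-1)^{\htc(\gamma^\vee)}\bigr)\).
\end{enumerate}
With this correction your argument becomes a self-contained version of the cited Rabelo--San Martin computation, where the same sign arises from \(\alpha^\vee(-1)\) acting on the suffix root lines.
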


\begin{proof}
This is the cellular boundary theorem of
\cite{RabeloSanMartin,LambertRabelo2022,LambertRabelo2026}.
The factor \(1+(-1)^{\htc(\gamma^\vee)}\) forces the
coefficient to vanish or have absolute value \(2\).  The remaining sign is
the product of the deletion sign \((-1)^I\) and the degree of the change of
coordinates between the reduced decomposition induced by deletion and the
chosen normal form.  We write it as \(\deg_{\mathrm{coord}}\) to emphasize
that this is the ordinary real degree of the coordinate change.  The commutation,
short-braid, and long-braid calculations in the cited algorithm,
specifically equations (5.3)--(5.6) in
\cite{LambertRabelo2026}, give an effective algorithm for that degree.
The passage to partial flags uses the corresponding projection theorem for
the cellular chain complex.
\end{proof}

Since all non-zero coefficients in \(\delta_i^\Theta\) are even,
define
\[
  D_i^\Theta = \frac{1}{2}\delta_i^\Theta.
\]
Because \((\delta^\Theta)^2=0\) and the groups are torsion-free,
\((D^\Theta)^2=0\).  Thus \(D_\bullet^\Theta\) is itself an integral chain
complex.  For \(k=\R\), \(\delta^\Theta\) is the real cellular boundary
and \(D^\Theta\) is one half of it.
In degree one, \(D_1^\Theta=0\).  Indeed, every length-one cover is
obtained from a simple reflection, and the boundary factor in
Proposition~\ref{prop:lr-boundary} is
\[
  1+(-1)^{\htc(\alpha^\vee)}=1+(-1)=0 .
\]
The restrict-and-project construction for partial flags preserves this
vanishing.

The Morel--Sawant cellular complex is indexed by codimension.  The
ordinary real Bruhat cellular complex, however, is indexed by the
dimension \(\ell(w)\) of the Bruhat cell.  The comparison between the two
uses the length-reversing map below.  Let \(w_0\) be the longest element of
\(W\), let \(w_\Theta\) be the longest element of \(W_\Theta\), and define
\[
  \pi_\Theta(w)=w_0ww_\Theta\in W^\Theta .
\]
Then \(\ell(\pi_\Theta(w))=d-\ell(w)\), and \(\pi_\Theta\) reverses the
Bruhat order on \(W^\Theta\).  Let
\[
  P_i\colon
  \bigoplus_{\ell(w)=d-i}\Z[w]\longrightarrow
  \bigoplus_{\ell(u)=i}\Z[u],
  \qquad
  P_i[w]=[\pi_\Theta(w)]
\]
be the corresponding permutation matrix.  The integral matrix controlling
the codimension-indexed motivic differential from degree \(i\) to degree
\(i-1\) is
\[
  E_i^\Theta := P_{i-1}^{-1}D_i^\Theta P_i .
\]
Equivalently, the coefficient of \([v]\), with
\(\ell(v)=\ell(w)+1\), in \(E_i^\Theta[w]\) is
\[
  \frac{1}{2}c_\Theta(\pi_\Theta(w),\pi_\Theta(v)).
\]

The indexing dictionary is:
\[
\begin{array}{c|c|c}
\text{ordinary real cellular degree} & \text{Bruhat length}
& \text{cellular $\mathbb{A}^1$-degree}\\
\hline
i & \ell(u)=i & \ell(w)=d-i\\
i\to i-1 & D_i^\Theta & E_i^\Theta=P_{i-1}^{-1}D_i^\Theta P_i
\end{array}
\]
Thus \(E_i^\Theta\) is the matrix \(D_i^\Theta\) transported through
the permutation \(P_i\).  In this convention the projective-plane case
\(\mathbb P^2\simeq SL_3/P\) has the differential
\(\KMW_2\to\KMW_1\) supplied by the corresponding top-to-middle real boundary entry.

\section{The Bruhat--Milnor--Witt comparison}
\label{sec:comparison}

Let
\[
  \partial_i^{\A^1}\colon \Ccell_i(X_\Theta)\longrightarrow
  \Ccell_{i-1}(X_\Theta)
\]
be the oriented cellular \(\A^1\)-boundary.  In the Morel--Sawant
formalism, after orientations are fixed, each codimension-one component
of \(\partial_i^{\A^1}\) with \(i\geq2\) is represented by a
Milnor--Witt coefficient over \(k\).  When \(k=\R\), real realization
sends multiplication by \(\eta\) to multiplication by \(2\) on the
corresponding real cellular chain.  This is the same phenomenon visible in
the cellular \(\A^1\)-complex of \(\mathbb P^n\): the real boundary
coefficient \(2\) of \(\mathbb R\mathbb P^n\) is refined motivically by
\(\eta\).

For \(r\geq 1\) put
\[
  r_\epsilon=\sum_{a=0}^{r-1}\langle (-1)^a\rangle\in \GW(k).
\]
In Milnor--Witt \(K\)-theory one has
\[
  (1+\langle -1\rangle)\eta=0,\qquad
  \langle -1\rangle\eta=-\eta.
\]
Consequently
\[
  r_\epsilon\eta=
  \begin{cases}
    \eta, & r\text{ odd},\\
    0, & r\text{ even}.
  \end{cases}
\]
Thus the motivic refinement of the ordinary boundary factor
\(1+(-1)^{\htc(\gamma^\vee)}\) is not
\((1+\langle -1\rangle^{\htc(\gamma^\vee)})\eta\), but rather
\[
  (\htc(\gamma^\vee)+1)_\epsilon\,\eta.
\]
It is equal to \(\eta\) when the coroot height is even and to \(0\) when
the coroot height is odd; after real realization it becomes \(2\) or
\(0\), respectively.

\begin{lemma}
\label{lem:local-attaching-map}
Let \(Y\subset\overline Z\) be a codimension-one inclusion of two
oriented affine strata in a Morel--Sawant cellular filtration,
contributing to a differential \(\partial_i^{\A^1}\) with \(i\geq2\), and
choose a point \(p\in Y\).  Suppose that, after choosing coordinates
compatible with the orientations, the completed henselian local attaching map at
\(p\) is represented by a homomorphism
\[
  k[[y_1,\ldots,y_m,\tau]]
  \longrightarrow
  k[[x_1,\ldots,x_m,t]]
\]
with
\[
  y_a\longmapsto f_a(x_1,\ldots,x_m),\qquad
  \tau\longmapsto t^r u(t),
\]
where \(f=(f_1,\ldots,f_m)\) is \'{e}tale at the closed point,
\(u(t)\) is a unit with \(u(0)=1\), and the ordered determinant lines of
the face differ from the cellular order by the deletion permutation of
ordinary sign \((-1)^I\).
Then the corresponding component of the oriented cellular
\(\A^1\)-differential is multiplication by
\[
  \langle -1\rangle^I\,\deg^{\A^1}(f)\, r_\epsilon\,\eta .
\]
If \(f\) has constant Jacobian determinant \(a\in k^\times\), then
\(\deg^{\A^1}(f)=\langle a\rangle\).
\end{lemma}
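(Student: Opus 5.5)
The plan is to reduce the computation to the Morel--Sawant description of a single component of the cellular differential as a composite of Thom isomorphisms and a connecting map. Then I would factor the local attaching map into an étale "face" part and a one-dimensional "normal" part. Concretely, the component of \(\partial_i^{\A^1}\) from \(Y\) to the \(Z\)-summand is computed as follows. Take the boundary map in the localization sequence for \(\overline Z\supset Y\) at the generic point of \(Y\), and compose it with the Thom identifications \(\KMW_{i}\cong\) (twisted) \(\KMW\) cohomology with support determined by the chosen normal orientations, as in \cite[Definitions~2.15 and~2.17, Lemma~2.23]{MorelSawant}. Since the strata are affine spaces and \(\KMW\) sheaves are strictly \(\A^1\)-invariant, this component is an element of \(\KMW_0(k)=\GW(k)\) acting on \(\KMW_i\), possibly times \(\eta\). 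By rigidity it can be computed after passing to the henselization or completion at any point \(p\in Y\).

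First, using the given presentation, I would change coordinates in the source by the étale map \(f\). On the determinant line of the face coordinates this multiplies the trivialization by the Jacobian. By Lemma~\ref{lem:thom-orientation-charts}, this acts on the Thom class by \(\deg^{\A^1}(f)\), which equals \(\langle a\rangle\) in the constant-Jacobian case. Reordering the determinant lines by the deletion permutation contributes \(\langle -1\rangle^I\), again by Lemma~\ref{lem:thom-orientation-charts}, since a transposition of oriented lines acts by \(\langle-1\rangle\). After these two normalizations the problem is one-dimensional: compute the residue contribution of the map \(\tau\mapsto t^r u(t)\) transverse to \(Y\).

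The key one-dimensional step is to identify this transverse contribution with the \(\A^1\)-degree-type class of \(t\mapsto t^r u\) composed with the residue \(\partial_\tau\). I would first reduce to \(u=1\). Since \(u(0)=1\), the unit \(u\) is a square in the completed ring (char \(\ne2\)), so \(t\mapsto t\,u^{1/r}\) is an allowed orientation-preserving reparametrization when \(r\) is prime to the characteristic. More robustly, \([t^ru]=[t^r]+\langle t^r\rangle[u]\), and \(\partial_t\) kills \([u]\) because \(u\) is a unit. Then the Milnor--Witt identity \([t^r]=r_\epsilon[t]\) gives \(\partial_t[t^r]=r_\epsilon\). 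The factor \(\eta\) is the standard one in the cellular differential for \(i\ge2\): the boundary from a codimension-\(i\) cell to a codimension-\((i-1)\) cell glued through a \(\Gm\)-direction is \(\eta\) times the degree. This is exactly the \(\mathbb P^n\) computation (\(\mathbb P^2\): \(\KMW_2\xrightarrow{\eta}\KMW_1\)), which I would invoke as the normalization when \(r=1\) and then carry along multiplicatively. Multiplying the three contributions gives \(\langle -1\rangle^I\deg^{\A^1}(f)r_\epsilon\eta\).

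The main obstacle is justifying that the global Morel--Sawant connecting homomorphism genuinely localizes to this completed picture. This requires that the attaching-map class is determined by its formal germ along \(Y\), and that the \(\eta\) produced for \(r=1\) is compatible with the \(r_\epsilon\) factor rather than with \((1+\langle-1\rangle^{r-1})\). I would attempt this via the homotopy purity/deformation to the normal cone, which reduces the computation to the normal bundle of \(Y\) in \(\overline Z\) restricted near \(p\). The normal bundle is trivialized by \(\tau\), so the computation becomes that of the map \(\A^1\to\A^1\), \(t\mapsto t^r\), on Thom spaces, i.e.\ multiplication by \(r_\epsilon\) on \(\mathbb P^1/\A^1\).
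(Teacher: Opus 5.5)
\textbf{Gap: the factor \(\eta\) is never derived.} The skeleton of your argument matches the paper's. You localize at \(p\), let the face change contribute \(\deg^{\A^1}(f)\) (which is \(\langle a\rangle\) for constant Jacobian), let the deletion permutation contribute \(\langle-1\rangle^I\), and obtain \(r_\epsilon\) from \([t^r]=r_\epsilon[t]\). The problem is what happens next.

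Your residue computation yields \(\partial_t[t^ru]=r_\epsilon\in\GW(k)\). But a component \(\KMW_i\to\KMW_{i-1}\) must be an element of \(\KMW_{-1}(k)=\eta\,\KMW_0(k)\). So the residue mechanism you set up does not by itself compute the differential. You supply the missing weight-lowering factor by fiat: calibrate on \(\mathbb P^2\) ``when \(r=1\)'' and ``carry along multiplicatively''.

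That step presupposes the factorization ``component \(=\) normal degree \(\times\) a universal \(\eta\)'', which is exactly the content of the lemma. A single calibration cannot settle the ambiguity you yourself flag, namely whether \(\eta\) pairs with \(r_\epsilon\) or with \(1+\langle-1\rangle^{r-1}\); after multiplying by \(\eta\) these differ by a factor \(2\) for odd \(r\).

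The calibration is also misplaced. In this paper \(\mathbb P^2\simeq SL_3/P_{\{\alpha_1\}}\) arises from the cover \(s_1s_2\gtrdot s_2\) with \(\gamma=\alpha_1+\alpha_2\), i.e.\ \(r=3\) (Remark~\ref{rem:sl3-partial-check}). In fact no Bruhat cover has \(r=1\), since \(r=\htc(\gamma^\vee)+1\ge 2\).

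\textbf{How the paper supplies \(\eta\).} It uses a structural factorization. After homotopy purity and Nisnevich excision, the local pair is the product of the face with the standard pair \((\A^1,\A^1\setminus\{0\})\), and the boundary is an external product with lower map \(f\wedge p_r\). The power map acts on \(\widetilde H^{\A^1}_1(\Gm)\cong\KMW_1\) by \([u]\mapsto[u^r]=r_\epsilon[u]\). The connecting morphism of the standard pair contributes an \(\eta\) that is independent of \(r\), \(f\) and \(I\); that independence is what makes the product formula hold for every \(r\).

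Your final paragraph moves toward this model, but it still extracts only \(r_\epsilon\) from \(\mathbb P^1/\A^1\). To complete your route you must exhibit where the degree-lowering \(\eta\) enters. In a residue model it would have to come out of the mechanism itself, for example from an orientation twist \(\langle g\rangle=1+\eta[g]\), whose residue is \(\eta\,v(g)_\epsilon\), rather than from a normalization.

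\textbf{A smaller point.} \(\partial_t\) does not kill \(\langle t^r\rangle[u]\) merely because \(u\) is a unit. Its residue is \(\eta\,r_\epsilon[u(0)]\), which vanishes because \(u(0)=1\). This is the same hypothesis the paper uses, through unit invariance \(\deg_0^{\A^1}(t^ru)=\langle u(0)\rangle\deg_0^{\A^1}(t^r)\).
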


\begin{proof}
By definition, the Morel--Sawant differential is the connecting morphism
in the cofiber sequence
\(\Omega_{i-1}\to\Omega_i\to\Omega_i/\Omega_{i-1}\), transported through
the oriented identifications of the normal bundles
\cite[Definition~2.25]{MorelSawant}.  The local form used here is the
codimension-one instance of their purity cofiber construction: homotopy
purity identifies the relevant quotient with a Thom space
\cite[Definition~2.11 and (2.1)]{MorelSawant}.  For a flag of two closed
strata, applying the same homotopy-purity argument as in the proof of
\cite[Lemma~4.15]{MorelSawant} gives a cofiber sequence of the
corresponding Thom spaces; we do not use the special low-degree group
calculation of that lemma, only its purity construction.  By Nisnevich
excision, the summand supported at \(p\) may be computed after passing to
the henselized local scheme \(\Spec\mathcal O^h_{\overline Z,p}\).
Choosing oriented coordinates
\[
  (x_1,\ldots,x_m,t)
\]
with \(Y=\{t=0\}\), the pair is identified with the standard purity pair
\[
  (\A^m\times\A^1,\ \A^m\times(\A^1-\{0\}))
\]
together with the Thom space of the normal bundle of \(Y\) in the ambient
cellular stratum.  Under this identification the boundary map is the
external product of the change of oriented coordinates on the face, the
orientation permutation of determinant lines, and the connecting morphism
for the pair \((\A^1,\A^1-\{0\})\).  This is the functoriality of homotopy purity
with respect to \'{e}tale coordinate changes, together with the monoidal
compatibility of Thom spaces used in Morel--Sawant's oriented cellular
complex; compare the permutation of suspension factors in
\cite[Equation~(4.10)]{MorelSawant}.  In these coordinates this
factorization is expressed by the commutative diagram in the pointed
\(\A^1\)-homotopy category
\[
\begin{array}{ccc}
\operatorname{Th}(\nu_Y)
&\longrightarrow&
\operatorname{Th}(\nu_Y\oplus L)\\
\big\downarrow&&\big\downarrow\\
S^m\wedge\Gm^{\wedge m}
&\xrightarrow{\ f\wedge p_r\ }&
S^m\wedge\Gm^{\wedge m}\wedge S^1\wedge\Gm ,
\end{array}
\]
up to the displayed deletion permutation of determinant factors.  The
vertical arrows are the oriented identifications of
\cite[Lemma~2.23]{MorelSawant}, and \(p_r\) denotes the endomorphism of
the normal-coordinate copy of \(\Gm\) induced by \(t\mapsto t^r u(t)\).
After applying reduced \(\A^1\)-homology, the top horizontal cofiber
boundary lowers the cellular degree by one:
\[
  \widetilde H_i^{\A^1}(\operatorname{Th}(\nu_Y\oplus L))
  \longrightarrow
  \widetilde H_{i-1}^{\A^1}(\operatorname{Th}(\nu_Y)).
\]
The above identifications send these two groups to
\(\KMW_i\) and \(\KMW_{i-1}\), respectively.  Under these identifications
the lower horizontal map in the diagram contributes the local degree of
\(f\) on the face coordinates and the endomorphism \(r_\epsilon\) on this
copy of \(\Gm\), while the connecting morphism for the pair
\((\A^1,\A^1-\{0\})\) contributes the final multiplication by \(\eta\).
The deletion permutation is the only additional suspension-factor
permutation, and contributes \(\langle-1\rangle^I\).

Consequently, a change of face coordinates acts on the Thom class by the
Milnor--Witt local-degree class \(\deg^{\A^1}(f)\) at the chosen point \(p\).
In the Bruhat applications the completed local coordinate changes have
invertible linear part and all remaining terms are of order at least two,
so the local degree is determined by the Jacobian determinant of that
linear part.  If this determinant is the constant unit \(a\in k^\times\),
then the class is \(\langle a\rangle\).

The deletion permutation changes the oriented Thom class by the
Milnor--Witt unit refining its ordinary sign, namely
\(\langle -1\rangle^I\).  For the normal factor, work in the
henselization of the normal coordinate line at \(p\).
Multiplication of a
one-variable germ by a unit changes Morel's local \(\A^1\)-degree by the
class of the value of that unit at the closed point:
\[
  \deg^{\A^1}_0(t^r u(t))
  =
  \langle u(0)\rangle\,\deg^{\A^1}_0(t^r).
\]
This is the standard unit-invariance of the local degree, equivalently the
change of orientation of the conormal generator in the local algebra
\(\mathcal O^h_{0}/(t^r)\); see Morel's construction of local
\(\A^1\)-degrees \cite[Chapter~3]{MorelA1}.  In the present situation
\(u(0)=1\), so the normal local degree is exactly the endomorphism
induced by the power map \(p_r\colon\Gm\to\Gm\), \(t\mapsto t^r\), on the
normal-coordinate copy of \(\Gm\).  Under the standard identification
\(\widetilde H^{\A^1}_1(\Gm)\cong \KMW_1\), this endomorphism is computed
on the Milnor--Witt generator by
\[
  [u]\longmapsto [u^r]=r_\epsilon [u]
\]
in Milnor--Witt \(K\)-theory, using the relation
\([ab]=[a]+[b]+\eta[a][b]\).  This is Morel's standard formula for the
\(\A^1\)-degree of the power map on \(\Gm\)
\cite[Chapter~3]{MorelA1}.  The formula is an identity of the induced
endomorphism of the strictly \(\A^1\)-invariant Milnor--Witt sheaf; it is
therefore not a separability statement about the finite morphism
\(p_r\).  The connecting morphism for the standard pair
\((\A^1,\A^1-\{0\})\) is multiplication by \(\eta\).  Multiplying the
three factors in the local purity model gives
\(\langle -1\rangle^I\deg^{\A^1}(f)r_\epsilon\eta\).
\end{proof}

\begin{lemma}
\label{lem:rank-two-determinants}
Let two reduced words differ by one Coxeter move inside a rank-two root
subsystem of type \(A_1\times A_1\), \(A_2\), or \(B_2=C_2\).  In the
oriented Chevalley coordinates fixed above, the Jacobian determinant of
the induced transition on the normal slice for this cover is
\[
  -1,\qquad +1,\qquad -1
\]
for a commutation, a short braid, and a long braid, respectively.
Consequently the corresponding Milnor--Witt local-degree classes are
\[
  \langle-1\rangle,\qquad \langle1\rangle,\qquad \langle-1\rangle .
\]
\end{lemma}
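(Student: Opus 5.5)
The plan is to reduce each of the three cases to an explicit Jacobian computation in the rank-two Chevalley coordinates of Lemma~\ref{lem:normalized-chevalley-compatibility}, and then read off the Milnor--Witt class from Lemma~\ref{lem:local-attaching-map}. In each case the two reduced words differ only in a consecutive block of length \(2\), \(3\), or \(4\). The corresponding factors of the charts \(\Phi_{\mathbf w}\) and \(\Phi_{\mathbf w'}\) are products of root-subgroup elements in the rank-two subgroup \(\langle U_{\pm\alpha},U_{\pm\beta}\rangle\); the remaining factors are untouched. The transition \(\Phi_{\mathbf w'}^{-1}\Phi_{\mathbf w}\) is therefore the identity outside this block. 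Its Jacobian determinant is the determinant of the block transition, which is a map \(\A^m\to\A^m\) with \(m\in\{2,3,4\}\). I would compute this map by rewriting the block product in the other ordering using the commutator identities of Lemma~\ref{lem:normalized-chevalley-compatibility}.

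For the commutation \(A_1\times A_1\), the identity \(x_\alpha(s)x_\beta(t)=x_\beta(t)x_\alpha(s)\) shows the transition is the coordinate swap \((s,t)\mapsto(t,s)\), with determinant \(-1\). For the short braid \(A_2\), I would write the word \(s_\alpha s_\beta s_\alpha\) in the root-subgroup order \(\alpha,\alpha+\beta,\beta\), and the word \(s_\beta s_\alpha s_\beta\) in the order \(\beta,\alpha+\beta,\alpha\). Reordering uses the \(A_2\) identity with the \(\mathrm{op}\) sign convention. The result is a map of the form \((a,b,c)\mapsto(c,\pm b+q(a,c),a)\), where \(q\) is quadratic. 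Its linear part is the permutation matrix of the transposition \((13)\), composed with the sign \(-1\) on the middle coordinate coming from \(x^{\mathrm{op}}_{\alpha+\beta}\). This gives determinant \((-1)(-1)=+1\). For the long braid \(B_2\), the same procedure applied to the four root subgroups \(\alpha,\alpha+\beta,2\alpha+\beta,\beta\) and their reverse produces the order-reversing permutation \((14)(23)\), which has sign \(+1\). On top of this there is one \(\mathrm{op}\) sign on the \(\alpha+\beta\) line from the third \(B_2\) identity, giving total determinant \(-1\). By Lemma~\ref{lem:b2-normalized-leading}, the coefficient \(2\) sits in an off-diagonal, higher-order position, so it does not enter the linear part.

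Because all higher-order terms are at least quadratic, the linear part is constant, so the Jacobian determinant is the constant unit \(a\in\{\pm1\}\). Lemma~\ref{lem:local-attaching-map}, together with Lemma~\ref{lem:thom-orientation-charts}, then gives \(\deg^{\A^1}=\langle a\rangle\). This yields \(\langle-1\rangle,\langle1\rangle,\langle-1\rangle\).

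The main obstacle is the phrase ``on the normal slice for this cover''. The cover deletes one reflection, so I must check that the determinant computed on the full block agrees with the determinant on the face coordinates together with the normal coordinate. This needs bookkeeping of which root line becomes normal, namely the line \(s_\rho\delta\) described in the proof of Lemma~\ref{lem:b2-normalized-leading}. The orderings must also match the weak-pinning signs \(\varepsilon_{\mathbf w,j}\). I would first do this for \(A_2\) with explicit \(3\times3\) matrices in \(SL_3\), and then verify the \(B_2\) sign with explicit \(Sp_4\) matrices over \(\Z[1/2]\).
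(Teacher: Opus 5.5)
Your proposal follows the paper's proof essentially step for step. Like you, the paper works in the rank-two Chevalley subgroup over $\Z[1/2]$, treats the commutator terms as higher order, and reads the determinant off as the sign of the coordinate-reversing permutation times the single opposite-coordinate sign on the non-simple root line, giving $-1$, $(-1)(-1)=+1$ and $(+1)(-1)=-1$. It also dismisses the $B_2$ coefficient $2$ via Lemma~\ref{lem:b2-normalized-leading} exactly as you do.

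Two small corrections. First, for $s_\alpha s_\beta s_\alpha s_\beta$ with $\alpha$ short, the root-subgroup order is $\alpha,2\alpha+\beta,\alpha+\beta,\beta$ rather than $\alpha,\alpha+\beta,2\alpha+\beta,\beta$; the reversal and the sign on the $\alpha+\beta$ line, and hence the determinant, are unchanged. Second, the fact that the nonlinear terms are quadratic does not by itself make the Jacobian determinant constant. Justify constancy instead by noting that the block transition is a polynomial automorphism of affine space, so its Jacobian is a unit of the polynomial ring and may be evaluated at the origin.
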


\begin{proof}
The calculation is local on the rank-two Chevalley subgroup generated by
the roots in the move.  Work with the universal split Chevalley group and
the normalized Chevalley coordinates over \(\Z[1/2]\), and pass to the
completed local ring at a chosen point of the codimension-one stratum.  The Chevalley commutator
formula expresses every reordering of root subgroup coordinates as a
polynomial transformation over \(\Z[1/2]\).
In the associated graded for the maximal ideal of this completed local ring, all
commutator terms involving sums of two or more moving coordinates have
degree at least \(2\).  Thus the determinant is determined by the linear
part, together with the signs by which the two reduced words orient the
same root lines.

We also record why the real coordinate formulas compute the same
determinant as the algebraic Chevalley charts.  The characteristic maps
are products of the rank-one compact exponentials in
the order specified by the reduced word.  At the closed point of the completed chart, the
tangent vector of such an exponential is the pinned root vector of the
corresponding root subgroup; after the normalization of
Lemma~\ref{lem:normalized-chevalley-compatibility}, the comparison with
the algebraic root subgroup coordinate is by a positive scalar over
\(\R\), and by the same chosen generator of the root line over \(k\).
Hence the first-order transition matrix in the moving
coordinates is the same signed permutation matrix in both descriptions.

With respect to the moving coordinates \(y\) and the
oriented normalized Chevalley coordinates of Section~\ref{sec:bruhat-cells},
the linear parts are as follows.  The entries involving \(-y_j\) record
exactly the change of orientation of the indicated root line.
\begin{align*}
A_1\times A_1:\quad
&(\alpha,\beta)\longmapsto(\beta,\alpha),\\
&(y_1,y_2)\longmapsto(y_2,y_1),\\[2pt]
A_2:\quad
&(\alpha,\alpha+\beta,\beta)
  \longmapsto(\beta,\alpha+\beta,\alpha),\\
&(y_1,y_2,y_3)\longmapsto(y_3,-y_2,y_1),\\[2pt]
B_2,\ \alpha\text{ short first}:\quad
&(\alpha,2\alpha+\beta,\alpha+\beta,\beta)\\
&\qquad\longmapsto(\beta,\alpha+\beta,2\alpha+\beta,\alpha),\\
&(y_1,y_2,y_3,y_4)\longmapsto(y_4,-y_3,y_2,y_1),\\[2pt]
B_2,\ \alpha\text{ long first}:\quad
&(\alpha,\alpha+\beta,\alpha+2\beta,\beta)\\
&\qquad\longmapsto(\beta,\alpha+2\beta,\alpha+\beta,\alpha),\\
&(y_1,y_2,y_3,y_4)\longmapsto(y_4,y_3,-y_2,y_1).
\end{align*}
The first row is just commutation of strongly orthogonal root subgroups.
For the \(A_2\) row, the coordinate reversal has determinant \(-1\), and
the middle root line changes orientation because the two bracket
identifications of \(\mathfrak g_{\alpha+\beta}\) differ by
\([e_\alpha,e_\beta]=-[e_\beta,e_\alpha]\); hence the determinant is
\((-1)(-1)=+1\).  For the two \(B_2\) rows, the coordinate reversal of four
variables has determinant \(+1\), and the displayed single sign change
comes from the corresponding Chevalley bracket identification of the
non-simple root line.  These two linear parts are the algebraic
Chevalley-coordinate version of equations (5.5) and (5.6) in
\cite{LambertRabelo2026}, where the affine terms \(\vartheta-y_j\)
contribute the signs \(-y_j\) to the tangent map.  Thus both long-braid
determinants are \(-1\).

By Lemma~\ref{lem:b2-normalized-leading}, the possible constant \(2\) in
the \(B_2\) Chevalley commutator formulas occurs only in a root coordinate
different from the relevant normal direction.  Such a term may
change a non-diagonal entry of the completed transition matrix, but it
does not change the product of the diagonal permutation entries and hence
does not affect the Jacobian determinant.
Since the determinant is always the unit \(\pm1\), the same computation is
valid after base change to every perfect field of characteristic different
from \(2\).
\end{proof}

\begin{lemma}
\label{lem:root-string-monomial}
Let \(v\gtrdot w\) be a Bruhat cover and let \(\gamma\) be the root with
\(v=ws_\gamma\).  After completing along
\(C_w\subset\overline{C_v}\), the normal coordinate of the
Bott--Samelson face map has the form
\[
  t\longmapsto t^{\htc(\gamma^\vee)+1}u(t),
  \qquad u(0)=1 .
\]
Consequently its contribution to the local \(\A^1\)-degree is the same as
that of the power map
\[
  p_{\htc(\gamma^\vee)+1}\colon t\longmapsto
  t^{\htc(\gamma^\vee)+1}.
\]
\end{lemma}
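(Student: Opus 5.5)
We reduce to a rank-one computation along the root \(\gamma\) and then pass to the completed local ring of \(\overline{C_v}\) along \(C_w\).

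\emph{Step 1: the face map near \(C_w\).} Write the fixed reduced word for \(v\) with the deleted reflection at position \(I\). By Proposition~\ref{prop:lr-boundary}, deleting \(s_I\) produces a word for \(w\). The Bott--Samelson face map is the product
\[
  u_{\beta_1}(x_1)\cdots u_{\beta_I}(t)\cdots u_{\beta_r}(x_r)\dot v .
\]
Near \(t=0\) it restricts to the chart \(\Phi_{\widehat v_I}\) on the face. We move the factor carrying \(t\) to the right, past the remaining factors, by the conjugation identities of Lemma~\ref{lem:normalized-chevalley-compatibility}. Modulo the \(U\)-part that is absorbed into the face coordinates \(f\), this reduces the normal direction to the rank-one subgroup \(\langle U_\gamma,U_{-\gamma}\rangle\simeq SL_2\) or \(PGL_2\), through the element \(\dot s_\gamma\).

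\emph{Step 2: the \(SL_2\) computation.} Inside this rank-one group we use the identity
\[
  x_\gamma(t)\dot s_\gamma = x_{-\gamma}(t^{-1})\,\gamma^\vee(t)\,x_\gamma(-t^{-1})\ \text{(up to sign)}.
\]
This rewrites the normal coordinate in the opposite chart through the torus factor \(\gamma^\vee(t)\). Conjugating that torus element back to the base point through the remaining root factors gives a total weight of \(t\) on the transversal root line. That weight is the pairing of \(\gamma^\vee\) with the sum of the simple roots crossed. In the Bott--Samelson/Kocherlakota computation this exponent is \(\htc(\gamma^\vee)+1\). We will take this exponent from the same root-string bookkeeping that underlies the real factor \(1+(-1)^{\htc(\gamma^\vee)}\) in Proposition~\ref{prop:lr-boundary}. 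Concretely, the real local degree of \(t\mapsto t^{r}\) is \(1+(-1)^{r-1}\), which matches that factor exactly when \(r=\htc(\gamma^\vee)+1\).

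\emph{Step 3: the unit \(u\).} The coefficient multiplying the leading power of \(t\) is a product of normalized Chevalley structure constants, one from each rank-two step. By Lemma~\ref{lem:b2-normalized-leading}, the only non-unit-one constant, namely \(2\), never sits in the relevant normal direction. The opposite-coordinate signs are recorded separately as orientation data via Lemma~\ref{lem:rank-two-determinants}. After absorbing those signs, the leading coefficient is \(1\). Higher-order corrections come from commutator terms of order at least two, so \(u(t)\in 1+t\,k[[t]]\).

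\emph{Step 4: the degree contribution.} The last sentence of the statement then follows from the unit-invariance \(\deg^{\A^1}_0(t^r u)=\langle u(0)\rangle\deg^{\A^1}_0(t^r)\), as used in Lemma~\ref{lem:local-attaching-map}.

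The main obstacle is Step 2: proving that the exponent is exactly \(\htc(\gamma^\vee)+1\) algebraically, uniformly over \(\Z[1/2]\), rather than importing it from the real computation. The first approach will be induction on \(\htc(\gamma^\vee)\). We conjugate by simple reflections \(s_\rho\) that lower the coroot height, and at each rank-two step check from the explicit identities that the \(t\)-adic order increases by one (the first \(B_2\) identity contributes \(s^2t\) in the long-to-short case). If the induction fails, the fallback is to quote the real-characteristic-map computation of \cite{LambertRabelo2026}. By Lemma~\ref{lem:normalized-chevalley-compatibility} the coordinates there differ from ours by positive scalars, and the formulas are polynomial over \(\Z[1/2]\), so the exponent and leading coefficient transfer to every \(k\).
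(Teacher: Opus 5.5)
Your proposal has a genuine gap exactly where you place the main obstacle: nothing in it shows that the exponent is \(\htc(\gamma^\vee)+1\).

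\textbf{Step~2 does not produce the exponent.} In the rank-one piece, the \(SL_2\) identity only exhibits the opposite-chart coordinate \(t^{-1}\), which has order one in the inverted parameter. The exponent is then attributed to a ``total weight'' of \(\gamma^\vee(t)\) on the transversal root line. You neither define nor compute that weight, and you identify it with \(\htc(\gamma^\vee)+1\) only by citation.

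\textbf{The real comparison cannot supply it.} The real boundary formula records only the sign \((-1)^{\htc(\gamma^\vee)}\), so it fixes \(r\) at best modulo \(2\). Your normalization is also off by a factor of two: the real local degree of \(t\mapsto t^r\) at \(0\) is \(1\) or \(0\), whereas \(1+(-1)^{r-1}\) is, up to sign, the real realization of \(r_\epsilon\eta\). Choosing \(r\) this way would also presuppose the real-to-motivic comparison that Theorem~\ref{thm:eta-comparison} is meant to derive from this lemma. Your fallback fails for the same reason. The characteristic-map computation of \cite{LambertRabelo2026} yields the factor \(1+(-1)^{\htc(\gamma^\vee)}\), which is sign data, not a \(t\)-adic exponent. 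Positive rescaling of root coordinates therefore transfers nothing beyond parity.

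\textbf{Your induction uses the wrong increments.} The \(t\)-adic order does not rise by one per rank-two step. In the paper, when the current deleted root \(\delta\) crosses a simple root \(\rho\), the normal direction moves to the root line of \(s_\rho\delta\). The order then rises by \(\htc((s_\rho\delta)^\vee)-\htc(\delta^\vee)=-\langle\rho,\delta^\vee\rangle\in\{0,1,2\}\):
\begin{itemize}
\item zero when \(\rho\perp\delta\);
\item one in \(A_2\) and in the \(B_2\) step \(\beta\mapsto2\alpha+\beta\);
\item two in the \(B_2\) step \(\alpha\mapsto\alpha+\beta\) of the third identity, whose coefficient is just \(st\), because \((\alpha+\beta)^\vee=\alpha^\vee+2\beta^\vee\).
\end{itemize}
In \(B_2\) no simple reflection lowers \(\htc((\alpha+\beta)^\vee)=3\) by exactly one. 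Unit increments would give order \(3\) instead of \(4\), turning \(4_\epsilon\eta=0\) into \(3_\epsilon\eta=\eta\). The square in \(x_{2\alpha+\beta}(s^2t)\) sits on the face coordinate \(s\), which is a unit at the base point, so it has no bearing on the \(t\)-order.

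\textbf{What the paper does instead.} It never passes through \(\dot s_\gamma\). It starts from a simple deleted root with normal factor \(t^2\) and follows the fixed normal-form sequence from \(\widehat v_I\) to the word for \(w\). The increments above telescope to \(\htc(\gamma^\vee)-1\), which also gives independence of the chosen sequence. Triangularity of the Chevalley product map rules out cancellation of the leading monomial. Braid loops are equalities of product morphisms over \(\Z[1/2]\), so they cannot change that monomial.

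Your Steps~3 and~4 do match the paper: the coefficient \(2\) lies off the relevant normal direction, and the local degree is invariant under units.
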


\begin{proof}
The height term in the real boundary formula
\cite[Theorem~6.1]{LambertRabelo2026} is computed in the universal split
Chevalley group over \(\Z[1/2]\), using the normalized coordinates of
Lemma~\ref{lem:normalized-chevalley-compatibility}; hence it commutes
with base change to \(k\).  Write
\[
  \gamma^\vee=\sum_{\alpha\in\Delta}m_\alpha\alpha^\vee,\qquad
  \htc(\gamma^\vee)=\sum_{\alpha\in\Delta}m_\alpha .
\]
Let \(A\) be the completed local ring of the Bott--Samelson chart at the
deleted face, and let \(t\) be the rank-one normal parameter for the
deleted reflection.  We filter \(A\) by the \(t\)-adic order, treating the
remaining face coordinates as degree \(0\).  The associated graded of the
face map is obtained by retaining, in each Chevalley commutator, only the
monomial of lowest \(t\)-order.  The rank-one deleted face has algebraic
normal-coordinate map \(t\mapsto t^2\) on \(\Gm\): for a simple
deleted root \(\delta\), this is the case
\(\htc(\delta^\vee)+1=2\).  This \(t^2\) is only the local power map
factor in the attaching morphism; the connecting morphism for
\((\A^1,\A^1-\{0\})\) gives the separate multiplication by \(\eta\).

For the basic \(A_2\) deleted-face calculation, let \(\alpha,\beta\) be
the simple roots and consider the step in which a
deleted \(\beta\)-direction is moved across an \(\alpha\)-coordinate.  We
complete at the closed point of the deleted-face chart and normalize the surviving face
coordinate as \(s=1+z\), so the completed source ring for this rank-two
piece is \(\mathcal R[[z,t]]\), where \(t\) is the deleted normal
parameter.  The Chevalley identity
\[
  x_\alpha(s)x_\beta(t)x_\alpha(-s)
  =
  x_\beta(t)x_{\alpha+\beta}(st)
\]
gives, on completed coordinate rings,
\[
  y_\beta\longmapsto t+\text{terms of higher root height},\qquad
  y_{\alpha+\beta}\longmapsto (1+z)t .
\]
The rank-one deleted face contributes the algebraic normal factor
\(t^2\), before applying the universal \(\eta\)-boundary.  After the
crossing, the normal coordinate of the \(\alpha+\beta\)-root is therefore
\[
  t^2\cdot (1+z)t=(1+z)t^3 .
\]
The coefficient is a unit with value \(1\) at \(z=0\).  The
triangular form of the product map places every other contribution either
in the \(y_\beta\)-coordinate or in \(t\)-order \(>3\), so no term of the
same initial degree is available to cancel this monomial.  This is the
local ring calculation behind the \(A_2\) entry
\(\htc((\alpha+\beta)^\vee)+1=3\) in the table.

For \(B_2\), let \(\alpha\) be short and \(\beta\) long, and write the
surviving face coordinate as \(s=1+z\).  The relevant one-dimensional normal
direction is the factor indexed by the reflected current deleted root, not
every root subgroup factor appearing in the Chevalley product.
Thus, when the current deleted root is \(\beta\) and it is moved across
\(\alpha\),
\[
  x_\alpha(s)x_\beta(t)x_\alpha(-s)
  =
  x_\beta(t)x_{\alpha+\beta}(st)x_{2\alpha+\beta}(s^2t)
\]
has normal coordinate
\[
  y_{2\alpha+\beta}\longmapsto (1+z)^2t,
\]
because \(s_\alpha\beta=2\alpha+\beta\).  Its unit has value \(1\) at the
\(z=0\).  This is the only \(B_2\) root for which the \(t\)-adic order is odd:
\[
  \htc((2\alpha+\beta)^\vee)+1
  =
  \htc(\alpha^\vee+\beta^\vee)+1=3 .
\]
In the second formula,
\[
  x_\alpha(s)x_{\alpha+\beta}(t)x_\alpha(-s)
  =
  x_{\alpha+\beta}(t)x_{2\alpha+\beta}(2st)
\]
the coordinate \(x_{2\alpha+\beta}(2st)\) contains the unit \(2\), but here
\(s_\alpha(\alpha+\beta)=\alpha+\beta\).  The relevant normal direction is
still the \(\alpha+\beta\)-coordinate, with coefficient \(1\); the
\(2\alpha+\beta\)-coordinate is a different root coordinate.  The
remaining elementary \(B_2\) move,
\[
  x_\beta(s)x_\alpha(t)x_\beta(-s)
  =
  x_\alpha(t)x_{\alpha+\beta}^{\mathrm{op}}(st),
\]
carries the relevant normal direction from \(\alpha\) to \(\alpha+\beta\) with
unit \(1\), but the final \(t\)-adic order for \(\alpha+\beta\) is
\[
  \htc((\alpha+\beta)^\vee)+1
  =
  \htc(\alpha^\vee+2\beta^\vee)+1=4,
\]
so its Milnor--Witt boundary factor is \(4_\epsilon\eta=0\).  Hence the
only \(B_2\) coefficient \(2\) is never the leading unit of a non-zero
Bruhat normal factor.

For each rank-two step, write
\[
  \widehat{\mathcal O}_{\mathrm{src}}
  =
  \mathcal R[[t,z_1,\ldots,z_m]],
  \qquad
  \widehat{\mathcal O}_{\mathrm{tgt}}
  =
  \mathcal R[[y_\rho:\rho\in\Psi^+]]
\]
be the completed source and target coordinate rings for the relevant
rank-two root subsystem \(\Psi\).  The Bott--Samelson product map is
triangular with respect to root height: the pullback of a target
coordinate \(y_\rho\) is the corresponding source root coordinate plus
polynomials in lower root coordinates.  After restricting to the
deleted-face component, the face coordinates which occur in the
root-string step are units with value \(1\) at the closed point of the completed chart, while
the deleted normal coordinate is \(t\).  Thus the initial form of the
pulled-back target normal coordinate in
\(\operatorname{gr}_t\widehat{\mathcal O}_{\mathrm{src}}\) is a unit of
value \(1\) times a pure power of \(t\).

When the deleted root direction crosses a simple root direction
\(\alpha\), with current deleted root \(\delta\), the relevant Chevalley
root-string formula has the form
\[
  x_\alpha(s)x_\delta(t)x_\alpha(s)^{-1}
  =
  \prod_j x_{\delta+j\alpha}(c_j s^j t),
\]
where the product ranges over the \(\alpha\)-string through \(\delta\).
Among these factors, the one indexed by \(s_\alpha\delta\) carries the
relevant one-dimensional normal direction.  After the normalization of
Lemma~\ref{lem:normalized-chevalley-compatibility} and the \(B_2\)
bookkeeping of Lemma~\ref{lem:b2-normalized-leading}, its leading unit is
\(+1\) whenever the corresponding boundary factor is non-zero.  The
increase in the \(t\)-adic order is exactly
\[
  \htc\bigl((s_\alpha\delta)^\vee\bigr)-\htc(\delta^\vee)
  =
  -\langle \alpha,\delta^\vee\rangle ,
\]
which is \(0,1\), or \(2\) in the allowed rank-two subsystems.  This
identity is just the coroot transformation
\((s_\alpha\delta)^\vee=s_\alpha(\delta^\vee)\).
Multiplying the rank-two Bott--Samelson face coordinates and retaining
only the lowest \(t\)-order terms gives the following complete list.  In
the \(B_2\) rows \(\alpha\) is short and \(\beta\) is long.
\[
\begin{array}{c|c|c|c}
\text{type} & \gamma & \gamma^\vee
& t\text{-adic order}\\
\hline
A_1\times A_1 & \alpha,\beta & \alpha^\vee,\beta^\vee & 2\\
A_2 & \alpha,\beta & \alpha^\vee,\beta^\vee & 2\\
A_2 & \alpha+\beta & \alpha^\vee+\beta^\vee & 3\\
B_2 & \alpha,\beta & \alpha^\vee,\beta^\vee & 2\\
B_2 & \alpha+\beta & \alpha^\vee+2\beta^\vee & 4\\
B_2 & 2\alpha+\beta & \alpha^\vee+\beta^\vee & 3
\end{array}
\]
These \(t\)-adic orders are exactly \(\htc(\gamma^\vee)+1\): the exponent
starts at \(2\) for a simple deleted root and increases by the displayed
coroot-height increment.

For arbitrary rank, use the fixed normal-form sequence from the deleted
word \(\widehat v_I\) to the chosen horizontal word for \(w\), written as
a product of rank-two moves.  We
prove by induction on the number of moves that, in the associated graded
for the \(t\)-adic filtration, the coordinate of the root line currently
carrying the deleted normal direction is a single monomial
\[
  t^{2+\sum_\alpha n_\alpha}
\]
with coefficient \(+1\), where \(n_\alpha\) is the number of coroot-string
increments already crossed beyond the initial simple coroot.  The initial
case is the rank-one map \(t\mapsto t^2\).  The induction step is exactly
one of the rank-two identities above.  In the associated graded, the
relevant target root space is one-dimensional and the triangularity of
the Chevalley product map implies that all other monomials in that target
coordinate either have strictly larger \(t\)-order or belong to a
different root coordinate.  Hence there is only one lowest \(t\)-order
monomial in the target normal coordinate.  No cancellation can occur, and
its coefficient is multiplied by the rank-two leading unit \(+1\) in all
non-zero boundary cases.

The total height increment is independent of the chosen normal-form
sequence.  Indeed, if the successive deleted roots are
\(\delta_0,\delta_1,\ldots,\delta_N=\gamma\), then the increment at the
step \(\delta_j\mapsto\delta_{j+1}=s_{\alpha_j}\delta_j\) is
\(\htc(\delta_{j+1}^\vee)-\htc(\delta_j^\vee)\).  The sum telescopes to
\[
  \htc(\gamma^\vee)-\htc(\delta_0^\vee)
  =
  \htc(\gamma^\vee)-1.
\]
Braid loops do not change the associated graded initial monomial, because
they are equalities of morphisms of the completed root-subgroup product
schemes over \(\mathcal R\).  Passing to
\(\operatorname{gr}_t\), both sides therefore have the same initial form
in the same one-dimensional target root coordinate.  If a braid loop
introduced a hidden sign or a non-trivial unit in the leading monomial,
the two initial forms in this associated graded ring would be different;
the rank-two checks in
Lemma~\ref{lem:normalized-chevalley-compatibility}, with the \(B_2\)
unit bookkeeping of Lemma~\ref{lem:b2-normalized-leading}, rule this out
on the generators of all such loops.  Thus in arbitrary rank the total
\(t\)-order is \(2+\htc(\gamma^\vee)-1\), namely
\(\htc(\gamma^\vee)+1\), and the leading coefficient remains \(+1\).
All remaining terms have strictly larger \(t\)-adic order, so the completed
normal map is \(t^{\htc(\gamma^\vee)+1}u(t)\) with \(u(0)=1\).  The
identities used here are identities among normalized root subgroup
coordinates in the split Chevalley group over \(\Z[1/2]\), hence they
base-change to every perfect field of characteristic different from \(2\)
under consideration.  On real points the parity of this \(t\)-adic order is
exactly the factor
\(1+(-1)^{\htc(\gamma^\vee)}\) in the real cellular boundary formula.
\end{proof}

\begin{lemma}
\label{lem:algebraic-lr-attaching-map}
Let \(v\gtrdot w\) be a Bruhat cover, let \(I\) be the deleted position in
the fixed reduced word of \(v\), and let \(\gamma\) be the root with
\(v=ws_\gamma\).  In the Chevalley charts fixed above, the component of
the Bruhat attaching map from \(C_v\) along the deleted face landing in
\(C_w\) has the local form
described in Lemma~\ref{lem:local-attaching-map} with
\[
  r=\htc(\gamma^\vee)+1,\qquad
  f=\Phi_w^{-1}\Phi_{\widehat v_I}.
\]
\end{lemma}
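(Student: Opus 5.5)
The plan is to assemble Lemma~\ref{lem:algebraic-lr-attaching-map} from the local ingredients already established, verifying in turn each hypothesis of Lemma~\ref{lem:local-attaching-map}.

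\emph{Step one: coordinates on a neighbourhood of the face.} Fix a point \(p\in C_w\). I would use the Bott--Samelson chart associated with the fixed reduced word of \(v\) and complete it along the deleted face \(\{x_I=0\}\). This chart is the product of root-subgroup coordinates \(\Phi_{\mathbf v}\). Setting \(t\) to be the \(I\)-th coordinate and \(x_1,\dots,\widehat{x_I},\dots,x_{\ell(v)}\) to be the face coordinates, the restriction of the product map to \(t=0\) is exactly \(\Phi_{\widehat v_I}\). Composing with the inverse of the fixed chart \(\Phi_w\) of the target stratum, the face component of the attaching map becomes \(f=\Phi_w^{-1}\Phi_{\widehat v_I}\).

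\emph{Step two: \(f\) is étale at the closed point.} I would factor \(f\) as a composition of the rank-two Coxeter moves in the fixed normal-form sequence. Lemma~\ref{lem:rank-two-determinants} shows that each move has invertible linear part with Jacobian determinant \(\pm1\). Triangularity of the Chevalley product then places all remaining terms in order at least two, so \(f\) is étale. Along the way I would record that \(\deg^{\A^1}(f)\) is the corresponding product of classes \(\langle\pm1\rangle\).

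\emph{Step three: the normal coordinate.} This is supplied by Lemma~\ref{lem:root-string-monomial}. After completion, the pullback of the normal coordinate of \(C_w\subset\overline{C_v}\) has the form \(t^{\htc(\gamma^\vee)+1}u(t)\) with \(u(0)=1\), and its initial monomial lies in a one-dimensional root coordinate distinct from the face coordinates. Hence the map has the required shape \(\tau\mapsto t^r u(t)\) with \(r=\htc(\gamma^\vee)+1\).

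\emph{Step four: splitting and orientation.} Two points remain: showing that the face and normal pieces decouple, and identifying the orientation permutation. For the decoupling, the triangular form in root height means the pulled-back face coordinates \(y_a\) may involve \(t\). Modulo \(t\) they equal \(f_a\), so a henselian change of the target face coordinates, with identity linear part, brings \(y_a\) to \(f_a(x)\) without changing the local degree. For the orientation, Lemma~\ref{lem:thom-orientation-charts} says that deleting the \(I\)-th coordinate changes the determinant trivialization by \((-1)^I\), and the determinant-complement convention transfers this sign to the normal line. For partial flags, I would use the quotient orientation and the minimal lift \(C_w(G/B)\cong C_w(G/P_\Theta)\), so the same local computation applies.

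I expect the decoupling in step four to be the main obstacle. The completed map must literally take the product form demanded by Lemma~\ref{lem:local-attaching-map}, and I would justify this by this triangular coordinate change together with invariance of the Morel--Sawant connecting map under orientation-preserving henselian automorphisms.
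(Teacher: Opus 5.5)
Your outline follows the paper's proof. The paper likewise takes the Bott--Samelson deletion chart and splits the face map into three pieces: the deletion permutation, the transition \(f=\Phi_w^{-1}\Phi_{\widehat v_I}\) (elementary factors from Lemma~\ref{lem:rank-two-determinants}), and the normal factor \(t^{\htc(\gamma^\vee)+1}u(t)\) from Lemma~\ref{lem:root-string-monomial}. It simply asserts the splitting that your Step~4 tries to justify. Two local steps in your write-up are wrong as stated, though both are easy to repair.

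\textbf{The location of the face.} The face is not \(\{x_I=0\}\) in the chart \(\Phi_v\). Setting \(x_I=0\) there still leaves a product of root-subgroup elements times \(\dot v\), so the image stays inside \(C_v\). Moreover, the factors after position \(I\) carry the roots of \(v\), not those of \(\widehat v_I\). The face sits at \(x_I=\infty\) of the \(I\)-th Bott--Samelson \(\mathbb P^1\)-factor; this is the ``boundary value'' in the paper's proof. You must use the other affine chart of that factor, via \(x_\alpha(x)\dot s_\alpha B=x_{-\alpha}(\pm x^{-1})B\), so that \(t\) is \(1/x_I\) up to sign. The resulting torus factor is absorbed into the later coordinates. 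Only in that chart does \(t=0\) give \(\Phi_{\widehat v_I}\).

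\textbf{The decoupling in Step~4.} A change of the \emph{target} face coordinates cannot in general remove the \(t\)-dependence of \(y_a\mapsto F_a(x,t)\). Everything pulled back from the target lies in the closed subring generated by the \(F_a\) and \(t^ru\). For example, with \(m=1\), \(F=x+t\) and \(r=2\), this subring is \(k[[x+t,t^2]]\), which does not contain \(x\). Straighten on the source instead. Put \(x'=f^{-1}(F(x,t))\). Then \(x'\equiv x\bmod t\), so this is an automorphism of \(k[[x,t]]\) with Jacobian determinant \(1\) at the closed point. After it, \(y_a\mapsto f_a(x')\), and \(\tau\mapsto t^ru(t)\) is unchanged.

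\textbf{Smaller points.} Step~2 needs no triangularity argument. Since \(\widehat v_I\) is a reduced word for \(w\), \(f\) is a composite of two isomorphisms \(\A^{\ell(w)}\cong C_w\), hence an automorphism of affine space. Your closing sentence on partial flags is not needed for this lemma. The paper treats that case separately, and the work there is the vertical determinant of Lemma~\ref{lem:partial-vertical-determinant}, which ``the same local computation applies'' passes over.
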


\begin{proof}
The Bott--Samelson boundary chart for a cover \(v\gtrdot w\) is obtained
by setting the deleted coordinate equal to the boundary value and keeping
the remaining root subgroup coordinates.  In the completed local ring at a
chosen point of the stratum \(C_w\subset\overline{C_v}\), this construction is purely algebraic:
it is the product map of the root subgroups in the split Chevalley group.
The face map therefore separates into the determinant permutation caused
by deleting the \(I\)-th coordinate, the transition from the deleted word
\(\widehat v_I\) to the fixed word for \(w\), and the one-dimensional
normal factor associated with \(\gamma\).  The transition part
is precisely the oriented Chevalley coordinate change
\(\Phi_w^{-1}\Phi_{\widehat v_I}\), whose elementary rank-two factors are
computed in Lemma~\ref{lem:rank-two-determinants}.

By Lemma~\ref{lem:root-string-monomial}, this normal factor is
\(t^{\htc(\gamma^\vee)+1}u(t)\) with \(u(0)=1\), and hence has the same
local \(\A^1\)-degree as the monomial \(p_{\htc(\gamma^\vee)+1}\).  On
real points its parity recovers the factor
\(1+(-1)^{\htc(\gamma^\vee)}\) in the signed Bruhat boundary formula.  Motivically
the same normal factor contributes
\((\htc(\gamma^\vee)+1)_\epsilon\eta\), by the power map calculation in
Lemma~\ref{lem:local-attaching-map}.  Thus the algebraic attaching component
has the claimed local form.
\end{proof}

\begin{lemma}
\label{lem:mw-coordinate-degree}
Let \(\mathbf a\) and \(\mathbf b\) be two reduced words for the same
element \(w\in W\).  Suppose a fixed normal-form sequence from
\(\mathbf a\) to \(\mathbf b\) uses \(N_{\mathrm{comm}}\) commutations and
\(N_{\mathrm{long}}\) long braid moves.  Over \(k\), the Milnor--Witt
local-degree class of the transition
\(\Phi_{\mathbf b}^{-1}\Phi_{\mathbf a}\) is
\[
  \deg^{\A^1}(\Phi_{\mathbf b}^{-1}\Phi_{\mathbf a})
  =
  \langle -1\rangle^{N_{\mathrm{comm}}+N_{\mathrm{long}}}
  \in \GW(k),
\]
where \(N_{\mathrm{comm}}\) and \(N_{\mathrm{long}}\) are counted in the
chosen normal-form sequence.  The parity of
\(N_{\mathrm{comm}}+N_{\mathrm{long}}\) is independent of the chosen
normal-form sequence because every such sequence gives the same completed
local coordinate-change map, whose Jacobian determinant is a well-defined
unit \(\pm1\) over the normalized Chevalley \(\Z[1/2]\)-model.
\end{lemma}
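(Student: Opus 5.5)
The plan is to factor the transition \(\Phi_{\mathbf b}^{-1}\Phi_{\mathbf a}\) along the fixed normal-form sequence
\[
  \mathbf a=\mathbf a_0\to\mathbf a_1\to\cdots\to\mathbf a_M=\mathbf b
\]
of Coxeter moves.  This writes it as the composite
\[
  \Phi_{\mathbf b}^{-1}\Phi_{\mathbf a}
  =
  (\Phi_{\mathbf a_M}^{-1}\Phi_{\mathbf a_{M-1}})\circ\cdots\circ
  (\Phi_{\mathbf a_1}^{-1}\Phi_{\mathbf a_0}),
\]
which is an identity of isomorphisms \(\A^{\ell(w)}\to\A^{\ell(w)}\) over \(\Z[1/2]\), since every \(\Phi_{\mathbf a_j}\) is an isomorphism onto \(C_w\).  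Each factor changes only the coordinates in one rank-two window of the word, of type \(A_1\times A_1\), \(A_2\) or \(B_2=C_2\).  Outside that window it is the identity on the remaining coordinates.  This holds because a Coxeter move replaces a consecutive subword by a braid-equivalent one with the same product in the rank-two Chevalley subgroup, and the root labels \(\beta_j\) outside the window are unchanged.

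Next I compute the Jacobian determinant of each factor at the base point.  By Lemma~\ref{lem:rank-two-determinants}, the linear part of the window transition is the displayed signed permutation.  Its determinant is \(-1\) for a commutation, \(+1\) for a short braid and \(-1\) for a long braid.  All commutator corrections have order at least two in the moving coordinates, and by Lemma~\ref{lem:b2-normalized-leading} the coefficient \(2\) only affects off-diagonal entries.  The full Jacobian of the factor is block diagonal with an identity block, so it has the same determinant.  By the chain rule, the Jacobian determinant of the composite at the base point is
\[
  (-1)^{N_{\mathrm{comm}}+N_{\mathrm{long}}}.
\]
To turn this into a statement about \(\deg^{\A^1}\), I use that the composite is a polynomial automorphism of affine space whose linear part at the base point is invertible.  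As in the proof of Lemma~\ref{lem:local-attaching-map}, its local \(\A^1\)-degree at an étale point with unit Jacobian \(a\) is \(\langle a\rangle\).  Lemma~\ref{lem:thom-orientation-charts} identifies this with the action on the Thom class.  Since each determinant factor is \(\pm1\), this gives \(\langle-1\rangle^{N_{\mathrm{comm}}+N_{\mathrm{long}}}\in\GW(k)\).

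I expect the main obstacle to be the \(\langle a\rangle\)-evaluation, because the Jacobian might a priori be a nonconstant function and not a constant unit.  To handle it, I would note that the Jacobian of a polynomial automorphism of \(\A^n_{\Z[1/2]}\) is a unit in \(\Z[1/2][x_1,\ldots,x_n]\), hence a constant in \(\Z[1/2]^\times\).  Its value at the base point is \(\pm1\), so it is the constant \(\pm1\), and this base-changes to every \(k\) of characteristic \(\neq2\).  Independence of the parity then follows formally.  Two normal-form sequences from \(\mathbf a\) to \(\mathbf b\) define the same morphism \(\Phi_{\mathbf b}^{-1}\Phi_{\mathbf a}\), since both charts are fixed, and hence the same constant Jacobian \(\pm1\).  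So the two exponents have the same parity, and the class in \(\GW(k)\) is well defined.
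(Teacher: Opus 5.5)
Your proposal is correct and follows the paper's proof. Both arguments factor \(\Phi_{\mathbf b}^{-1}\Phi_{\mathbf a}\) along the chosen sequence of Coxeter moves, take the elementary determinants \(-1,+1,-1\) from Lemma~\ref{lem:rank-two-determinants}, multiply them, and get parity-independence from the fact that the composite is the fixed transition between the two charts. Your additional remark that the Jacobian of a polynomial automorphism of \(\A^n_{\Z[1/2]}\) is a constant unit is a useful supplement. It makes explicit what the paper only calls ``intrinsic,'' and it justifies evaluating the local degree as \(\langle\pm1\rangle\) at every point of the chart, not only at the base point.
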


\begin{proof}
By Matsumoto's theorem \cite[Theorem~3.3.1]{BjornerBrenti}, the two
reduced words are connected by commutations, short braid moves, and long
braid moves.  The corresponding coordinate changes are obtained inside
the relevant rank-two Chevalley subgroups.  By
Lemma~\ref{lem:rank-two-determinants}, a commutation
contributes \(\langle-1\rangle\), a short braid contributes
\(\langle1\rangle\), and a long braid contributes \(\langle-1\rangle\).
Multiplicativity of the Milnor--Witt local degree under composition gives
the displayed class.  Since the product of elementary moves is the
algebraic transition \(\Phi_{\mathbf b}^{-1}\Phi_{\mathbf a}\), this
product of signs is the determinant of the transition in the completed
local ring and is therefore intrinsic.  When \(k=\R\), applying the
signature map to this same determinant recovers exactly the ordinary
degree of the coordinate change.
\end{proof}

\begin{lemma}
\label{lem:height-normal-factor}
Let \(v\gtrdot w\) be a Bruhat cover and let \(\gamma\) be the root with
\(v=ws_\gamma\).  In the Morel--Sawant connecting morphism for this
cover, when it contributes to a differential \(\partial_i^{\A^1}\)
with \(i\geq2\), the normal factor is multiplication by
\[
  (\htc(\gamma^\vee)+1)_\epsilon\,\eta.
\]
\end{lemma}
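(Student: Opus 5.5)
The plan is to assemble the statement from three earlier results: Lemma~\ref{lem:algebraic-lr-attaching-map}, Lemma~\ref{lem:root-string-monomial} and the normal-factor part of Lemma~\ref{lem:local-attaching-map}.

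First, apply Lemma~\ref{lem:algebraic-lr-attaching-map} to the cover \(v\gtrdot w\). It places the Bruhat attaching component in the local form required by Lemma~\ref{lem:local-attaching-map}. In that form the face part is \(f=\Phi_w^{-1}\Phi_{\widehat v_I}\) and the normal coordinate is \(\tau\mapsto t^{r}u(t)\) with \(r=\htc(\gamma^\vee)+1\). Lemma~\ref{lem:root-string-monomial} supplies the normalization \(u(0)=1\), which holds over \(\Z[1/2]\) and hence after base change to \(k\). Under the Morel--Sawant purity identification, the attaching map then factors as an external product. One factor is the face map together with the deletion permutation. The other is the map on the normal-coordinate copy of \(\Gm\) followed by the connecting morphism of \((\A^1,\A^1-\{0\})\). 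The normal factor is the composite of these last two pieces.

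Second, compute that composite using the one-variable part of the proof of Lemma~\ref{lem:local-attaching-map}.
\begin{itemize}
\item Unit-invariance of Morel's local degree gives \(\deg_0^{\A^1}(t^ru(t))=\langle u(0)\rangle\deg_0^{\A^1}(t^r)=\deg_0^{\A^1}(t^r)\).
\item On \(\widetilde H_1^{\A^1}(\Gm)\cong\KMW_1\), the power map \(p_r\) sends \([u]\) to \([u^r]\). Iterating \([ab]=[a]+[b]+\eta[a][b]\) gives \([u^r]=r_\epsilon[u]\).
\item The connecting morphism of \((\A^1,\A^1-\{0\})\) contributes multiplication by \(\eta\). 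Composing gives \(r_\epsilon\eta\).
\item Since \(r_\epsilon\eta\) depends only on the parity of \(r\), it equals \(\eta\) or \(0\) according as \(\htc(\gamma^\vee)\) is even or odd.
\end{itemize}
As a consistency check, real realization sends this factor to \(1+(-1)^{\htc(\gamma^\vee)}\), which matches Proposition~\ref{prop:lr-boundary}.

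The main obstacle is showing that the normal factor is well defined, separately from the orientation units. The concern is that the unit \(u(t)\), or the \(B_2\) coefficient \(2\), might leak into the normal factor instead of staying in the face determinant. I would handle this in three steps.
\begin{itemize}
\item The order \(r\) is intrinsic: it is the telescoping sum from Lemma~\ref{lem:root-string-monomial}, and braid loops do not change the initial monomial.
\item The leading coefficient is \(+1\) whenever \(r\) is odd, by Lemma~\ref{lem:b2-normalized-leading}.
\item When \(r\) is even, \(\langle a\rangle r_\epsilon\eta=0\) for every unit \(a\). So no control of the unit is needed in that case.
\end{itemize}
Together these show that the factor is exactly \((\htc(\gamma^\vee)+1)_\epsilon\eta\), independent of the choices made. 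The hypothesis \(i\geq2\) ensures that the target \(\KMW_{i-1}\) has positive weight, so the purity identification of Lemma~\ref{lem:local-attaching-map} applies and the degree-one case \(\partial_1^{\A^1}\) is excluded.
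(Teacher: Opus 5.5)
Your proposal is correct and follows essentially the same route as the paper. The paper's proof also takes the local form \(t\mapsto t^{h+1}u(t)\) with \(u(0)=1\) from Lemma~\ref{lem:algebraic-lr-attaching-map}, reduces to the power map \(p_{h+1}\) by unit-invariance of the local degree, uses \([u^{h+1}]=(h+1)_\epsilon[u]\) together with the \(\eta\) from the connecting morphism of \((\A^1,\A^1-\{0\})\), and concludes with the parity identity for \(r_\epsilon\eta\). Your extra well-definedness discussion (intrinsic order, leading unit, vanishing for even \(r\)) is not needed, since the paper treats those points as settled in Lemma~\ref{lem:root-string-monomial}; one small precision is that \([u^r]=r_\epsilon[u]\) also uses \([u][u]=[u][-1]\), not only iteration of \([ab]=[a]+[b]+\eta[a][b]\).
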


\begin{proof}
By Lemma~\ref{lem:algebraic-lr-attaching-map}, the normal part of the
attaching map is \(t\mapsto t^{h+1}u(t)\), with
\(h=\htc(\gamma^\vee)\) and \(u(0)=1\), followed by the connecting
morphism for \((\A^1,\A^1-\{0\})\).  By the unit-homotopy argument in
Lemma~\ref{lem:local-attaching-map}, this normal map has the same local
\(\A^1\)-degree as the power map \(p_{h+1}\colon\Gm\to\Gm\).  This degree
is \((h+1)_\epsilon\), and this connecting morphism is multiplication by
\(\eta\).  The power map formula is the Milnor--Witt identity
\([u^{h+1}]=(h+1)_\epsilon[u]\) for the induced endomorphism of
\(\KMW_1\); it does not require the finite morphism \(p_{h+1}\) to be
separable.  By the identity
\[
  r_\epsilon\eta=
  \begin{cases}
    \eta, & r\text{ odd},\\
    0, & r\text{ even},
  \end{cases}
\]
this differential is exactly
\((h+1)_\epsilon\eta\).  Substituting
\(h=\htc(\gamma^\vee)\) gives the stated normal factor.
\end{proof}

\begin{lemma}
\label{lem:deletion-orientation}
With the fixed convention for the \(I\)-th deleted reflection, the
orientation contribution of the deleted face in the oriented
Milnor--Witt cellular complex is multiplication by
\(\langle -1\rangle^I\).
\end{lemma}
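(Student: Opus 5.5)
The plan is to reduce the statement to the determinant-line bookkeeping already done in Lemma~\ref{lem:thom-orientation-charts}, and then to the unit-level convention in Morel--Sawant's Thom isomorphism.

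First, fix a cover \(v\gtrdot w\) and the face chart \(\Phi_{\widehat v_I}\) obtained by deleting the \(I\)-th coordinate of the fixed horizontal word for \(v\). By Lemma~\ref{lem:thom-orientation-charts}, deleting \(dx_I\) from \(dx_1\wedge\cdots\wedge dx_{\ell(v)}\) and moving the normal direction \(dx_I\) to the front (or back, according to the fixed convention) changes the oriented tangent determinant of the face by the sign of a cyclic permutation of \(I\) (or \(I-1\), absorbed into the convention) factors. Since the top determinant of \(X_\Theta\) is fixed once and for all, the identity \(\det T(C_\bullet)\otimes\det\nu_{C_\bullet}\cong\det T(X_\Theta)|_{C_\bullet}\) transfers exactly this sign to the trivialization of the normal determinant line of the face. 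For partial flags, the quotient determinant orientation through \(V_w\) involves only the vertical lines. These lines are ordered by a convex order that is unaffected by the deletion, so the same sign persists.

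Second, I translate the sign into Milnor--Witt terms. A change of trivialization of an oriented line bundle by the scalar \(a\in k^\times\) acts on the Thom class by \(\langle a\rangle\), by \cite[Lemma~2.23]{MorelSawant} as used in Lemma~\ref{lem:thom-orientation-charts}. Here \(a=(-1)^I\), so the factor is \(\langle(-1)^I\rangle=\langle-1\rangle^I\), using multiplicativity \(\langle a\rangle\langle b\rangle=\langle ab\rangle\). Equivalently, this is the permutation of suspension factors \(S^1\wedge\Gm\) in \cite[Equation~(4.10)]{MorelSawant}: a transposition of two \(\Gm\)-factors acts by \(\epsilon=-\langle-1\rangle\), and the transposition of two \(S^1\)-factors acts by \(-1\). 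For each adjacent swap these combine to \(\langle-1\rangle\). Iterating over the swaps needed to move the deleted factor past the preceding coordinates gives \(\langle-1\rangle^I\).

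The main obstacle is checking that the \(S^1\) and \(\Gm\) permutation signs combine to \(\langle-1\rangle\) per swap, rather than to \(\epsilon\) or \(-1\) alone. I would first verify this on the smooth sphere \((\A^1/\A^1-0)^{\wedge 2}\simeq S^2\wedge\Gm^{\wedge2}\), where the swap is the linear map of \(\A^2\) with determinant \(-1\). Morel's degree of a linear automorphism is \(\langle\det\rangle\), which gives \(\langle-1\rangle\) directly and avoids separate sign tracking. The result is also compatible with real realization: \(\langle-1\rangle\mapsto-1\), which recovers the factor \((-1)^I\) of Proposition~\ref{prop:lr-boundary}.
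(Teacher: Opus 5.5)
Your proposal is correct and follows essentially the same route as the paper: the deletion sign \((-1)^I\) is transferred to the normal determinant line through the determinant-complement identity of Lemma~\ref{lem:thom-orientation-charts}, and the Thom-class convention (a change of trivialization by \(a\) acts by \(\langle a\rangle\)) refines it to \(\langle -1\rangle^I\). Your extra check via suspension-factor swaps, where \((-1)\cdot\epsilon=\langle-1\rangle\) for each swap of \(S^1\wedge\Gm\) factors, is a consistent cross-check that the paper only invokes implicitly as the permutation factor in Lemma~\ref{lem:local-attaching-map}.
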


\begin{proof}
This is the permutation factor in Lemma~\ref{lem:local-attaching-map}.  We
index faces so that deleting the \(I\)-th coordinate has
ordinary cellular sign \((-1)^I\).  By
Lemma~\ref{lem:thom-orientation-charts}, the normal orientation is
the determinant complement of this horizontal cell orientation inside the
fixed orientation of the ambient flag variety.  Therefore the same
determinant permutation changes the normal determinant trivialization; its Milnor--Witt
refinement is \(\langle -1\rangle^I\).  Hence the deletion sign lifts to
\(\langle -1\rangle^I\).
\end{proof}

\begin{lemma}
\label{lem:partial-vertical-determinant}
Let \(v\gtrdot w\) be a Bruhat cover with \(v,w\in W^\Theta\).  Along the
minimal lifts
\[
  C_v(G/B)\longrightarrow C_v(G/P_\Theta),
  \qquad
  C_w(G/B)\longrightarrow C_w(G/P_\Theta),
\]
the relative tangent bundle of \(\pi\colon G/B\to G/P_\Theta\) contributes
no Milnor--Witt unit to the local attaching map.  Equivalently, after
choosing the vertical orientation induced by the ordered Chevalley root
lines, the determinant of the relative tangent block, which we call the vertical determinant in the completed local attaching map has
class \(\langle 1\rangle\in\GW(k)\).
\end{lemma}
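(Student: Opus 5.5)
The idea is to show that the vertical root-line block of the attaching map is an identity-up-to-higher-order block, with the same ordered, oriented root lines on both sides. Then its determinant is \(1\), and Lemma~\ref{lem:local-attaching-map} gives the class \(\langle1\rangle\).

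First I would identify the vertical bundles on the two strata. Since \(v,w\in W^\Theta\), minimality gives \(v(\Phi_\Theta^+)\subset\Phi^+\) and \(w(\Phi_\Theta^+)\subset\Phi^+\). By the description of \(V_w\) in Section~\ref{sec:bruhat-cells},
\[
  V_v\cong\bigoplus_{\alpha\in\Phi_\Theta^+}\mathfrak g_{-v\alpha},\qquad
  V_w\cong\bigoplus_{\alpha\in\Phi_\Theta^+}\mathfrak g_{-w\alpha}.
\]
Both are ordered by the same fixed convex order on \(\Phi_\Theta^+\), transported by \(v\) and by \(w\) respectively. Both are oriented by the weak pinning.

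Next I would describe the vertical directions in the attaching chart. The fibre of \(\pi\) through \(\dot vP_\Theta\) is \(\dot v P_\Theta/B\). Its Chevalley coordinates are root-subgroup coordinates \(u_{v\alpha}\) for the opposite roots, which \emph{right}-multiply the horizontal product \(\Phi_{\mathbf v}\). The deletion of the \(I\)-th reflection and the normal-form coordinate change \(\Phi_w^{-1}\Phi_{\widehat v_I}\) act only on the left factor \(\prod U_{\beta_j}\). Writing \(v=ws_\gamma\) and using that \(s_\gamma\) is absorbed on the horizontal side, the vertical factor \(\dot v u_{\alpha}(y_\alpha)\dot v^{-1}\) is carried to \(\dot w\,u_{\alpha}(y_\alpha)\dot w^{-1}\) root by root. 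This holds modulo terms produced by Chevalley commutators with the horizontal and normal coordinates.

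I would then argue that these commutator terms do not affect the determinant. Exactly as in Lemma~\ref{lem:rank-two-determinants}, the commutator terms involve products of at least two moving coordinates, so they lie in \(\mathfrak m^2\) of the completed local ring. Alternatively, they land in non-vertical root coordinates, and so contribute only off-diagonal entries in a block-triangular Jacobian. So in the associated graded, the vertical block is the linear map \(y_\alpha\mapsto y_\alpha\), \(\alpha\in\Phi_\Theta^+\), between the two transported ordered lists.

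The step I expect to be the main obstacle is checking that this block is the identity and not a signed permutation. It requires the following.
\begin{itemize}
\item The transported convex orders agree: they do, because both are the image of one order on \(\Phi_\Theta^+\).
\item The weak-pinning signs of \(\mathfrak g_{-v\alpha}\) and \(\mathfrak g_{-w\alpha}\) match under this identification.
\end{itemize}
For the sign matching I would first try to use the fact that \(\Theta\)-vertical root subgroups are conjugated by the fixed representatives \(\dot s\) along the reduced word. The Lemma~\ref{lem:normalized-chevalley-compatibility} normalization then fixes the same root-line generators independently of the horizontal word. If a residual sign appears, the fallback is to absorb it into the choice of normal expression, which the quotient determinant orientation \(\det\nu_{C_w(G/B)}\cong\det V_w\otimes\pi^*\det\nu_{C_w(G/P_\Theta)}\) allows.

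Granting this, the vertical determinant is \(1\). Lemma~\ref{lem:local-attaching-map} then gives the vertical contribution \(\langle1\rangle\) in \(\GW(k)\), so the quotient orientation transfers the full-flag coefficient unchanged to \(G/P_\Theta\).
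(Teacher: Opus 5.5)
There is a genuine gap, and it is exactly the step you flag as the main obstacle. Showing that the leading vertical block is the identity, rather than a signed or torus-twisted identification, is the entire content of the lemma. Neither of your two proposed resolutions establishes it.

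Transporting vertical root lines one at a time and ``absorbing \(s_\gamma\) on the horizontal side'' means comparing \(\dot v\) with something like \(\dot w\,\dot s_\gamma\). These representatives differ by an element \(h\) of the torus. Moreover, conjugation by the fixed representatives sends \(u_\beta(c)\) to \(u_{s\beta}(\pm c)\) in general. Such an \(h\) acts on each vertical root line by a character value \(\pm1\), and the product of these values is precisely a possible unit \(\langle\pm1\rangle\) in the vertical determinant. Lemma~\ref{lem:normalized-chevalley-compatibility} only fixes the root-line generators; it does not control these conjugation signs. So your first attempt does not close the argument.

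The fallback is not available either. The vertical orientation is fixed in the statement. Lemma~\ref{lem:thom-orientation-charts} shows the full-flag normal orientation is independent of the compatible choice of normal expression, so no freedom remains to absorb a sign. Re-orienting to kill the sign would change the very full-flag coefficients that Proposition~\ref{lem:partial-flag-orientations} compares with. In addition, a sign depending on the cover \(v\gtrdot w\) need not be absorbable by re-orienting individual cells at all.

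The idea you are missing is to compare group representatives instead of root lines. After the normal-form change \(y=\Phi_w^{-1}\Phi_{\widehat v_I}(x)\), the deleted-face chart and the target chart are the same element of the unipotent slice \(U_w\dot w\). That is, \(g_v(0,x)=g_w(y)\) as group elements, not merely modulo \(B\).

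Write \(g_v(t,x)=g_w(y(t,x))\,q(t,x)\) with \(q(t,x)\in P_\Theta\). The fibre coordinates \(p_\Theta(z)=\prod_{\alpha\in\Phi_\Theta^+}u_{-\alpha}(z_\alpha)B\) are then moved by left multiplication by \(q\), so the vertical block is the action of \(q\) on \(\mathfrak p_\Theta/\mathfrak b\). Since \(q(0,x)=1\), this block is the identity modulo the maximal ideal. This rules out a signed permutation and a surviving Levi torus factor at once, and gives determinant residue \(1\), hence \(\langle1\rangle\).

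Your \(\mathfrak m^2\) and block-triangularity observations are consistent with this. Without the equality of representatives on the face, however, they do not determine the leading block.
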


\begin{proof}
Write
\[
  V_w=\ker(d\pi)|_{C_w(G/B)} .
\]
Since \(w\in W^\Theta\), one has \(w(\Phi_\Theta^+)\subset\Phi^+\), and
the relative tangent bundle along the minimal section is
\[
  V_w\cong
  \bigoplus_{\alpha\in\Phi_\Theta^+}\mathfrak g_{-w\alpha}.
\]
We order these root lines by a fixed convex order on
\(\Phi_\Theta^+\), transported by \(w\), and orient them by the weak
pinning.

The argument is local on the corresponding partial-flag boundary component.
Choose a point \(p\in C_w(G/P_\Theta)\) and pass to the henselization of the two-cell
neighborhood of \(p\) in \(\overline{C_v(G/P_\Theta)}\).  Choose the
minimal Chevalley section on each Bruhat cell:
\[
  s_u(x)=u_u(x)\dot u B,\qquad u\in W^\Theta .
\]
The projection is locally described, after adding the fiber coordinates
on the big cell of \(P_\Theta/B\), by
\[
  (x,z)\longmapsto
  s_w(x)\,p_\Theta(z),
  \qquad
  p_\Theta(z)=
  \prod_{\alpha\in\Phi_\Theta^+}u_{-\alpha}(z_\alpha)B .
\]
Projection to \(G/P_\Theta\) forgets the \(z\)-coordinates.  At \(z=0\),
the vertical differentials are the ordered root vectors
\(\mathfrak g_{-w\alpha}\).

Let \(g_v(t,x)\) be the minimal full-flag deletion chart over the source
partial cell, and let \(g_w(y)\) be the minimal target chart over
\(C_w(G/P_\Theta)\).  On the deleted face \(t=0\), replace the target
coordinates by the fixed normal-form transition
\[
  y=\Phi_w^{-1}\Phi_{\widehat v_I}(x).
\]
Both sides are then represented by the same canonical point of the
unipotent Bruhat slice \(U_w\dot w\) in the full flag:
\[
  g_v(0,x)=g_w\bigl(\Phi_w^{-1}\Phi_{\widehat v_I}(x)\bigr)
  \qquad\text{as group representatives, not only modulo }B.
\]
The cover \(v\gtrdot w\) is compared inside the minimal full-flag cell
\(C_w(G/B)\), with the fiber coordinate based at the identity coset of
\(P_\Theta/B\).

For \(t\) near \(0\), the two representatives have the same image in
\(G/P_\Theta\), so there is a unique element
\[
  q(t,x)\in P_\Theta
\]
near the identity coset such that
\[
  g_v(t,x)=g_w(y(t,x))\,q(t,x).
\]
The vertical coordinates transform by the left action of \(q(t,x)\) on
the fiber \(P_\Theta/B\).  Hence the linear map on the vertical tangent
at \(p\) is the action of the image of \(q(0,x_0)\) on
\[
  \mathfrak p_\Theta/\mathfrak b
  \cong
  \bigoplus_{\alpha\in\Phi_\Theta^+}\mathfrak g_{-\alpha},
\]
transported by \(w\).

The displayed equality on the deleted face gives
\[
  q(0,x)=1
\]
after the normal-form coordinate change.  Equivalently, in the completed
local ring at \(p\) the matrix of the \(P_\Theta\)-action
on \(\mathfrak p_\Theta/\mathfrak b\) is congruent to the identity matrix
modulo the maximal ideal.  This excludes a hidden Levi torus factor:
if such a factor \(t_0\in T\) survived in the residue of \(q\), it would
act on the ordered root lines by the diagonal character
\(\prod_{\alpha\in\Phi_\Theta^+}\alpha(t_0)^{-1}\), producing a possible
Milnor--Witt unit.  The equality of unipotent representatives forces
\(t_0=1\).  Hence the residue of the vertical determinant is
\[
  \det(dq|_{\mathfrak p_\Theta/\mathfrak b})(p)=1 .
\]

Consequently the vertical determinant has Milnor--Witt class
\(\langle1\rangle\).  Under the oriented identifications, a
determinant \(a\in k^\times\) contributes the unit \(\langle a\rangle\)
\cite[Definitions~2.15 and~2.17, Lemma~2.23]{MorelSawant}; hence the
relative tangent factor contributes no additional unit.
\end{proof}

\begin{proposition}
\label{lem:partial-flag-orientations}
Let \(\pi\colon G/B\to G/P_\Theta\) be the natural projection.  For
\(w\in W^\Theta\), the restriction
\[
  \pi\colon C_w(G/B)\longrightarrow C_w(G/P_\Theta)
\]
is an isomorphism.  Equip the partial-flag normal bundles with the quotient
orientations determined by
\[
  0\longrightarrow V_w
  \longrightarrow \nu_{C_w(G/B)}
  \longrightarrow \pi^*\nu_{C_w(G/P_\Theta)}
  \longrightarrow 0,
  \qquad
  V_w=\ker(d\pi)|_{C_w(G/B)} .
\]
Then, for every cover \(v\gtrdot w\) with \(v,w\in W^\Theta\) contributing
to a differential \(\partial_i^{\A^1}\) with \(i\geq2\), the motivic
boundary coefficient on \(G/P_\Theta\) is the coefficient of the minimal
full-flag cover \(v\gtrdot w\) on \(G/B\).  In particular the passage
from the full flag to the partial flag introduces no additional
Milnor--Witt unit.
\end{proposition}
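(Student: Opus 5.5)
The isomorphism statement comes first, and it is standard. For \(w\in W^\Theta\), the cell \(C_w(G/B)\) is the orbit \(U_w\dot wB\) with \(U_w=\prod_{\beta\in\Phi^+\cap w(\Phi^-)}U_\beta\). The cell \(C_w(G/P_\Theta)\) is \(U_w\dot wP_\Theta\), because minimality gives \(\ell(w)=\dim C_w(G/P_\Theta)\). The map \(\pi\) is therefore the identity on the \(U_w\)-coordinates of the Chevalley charts \(\Phi_w\). This also shows that the horizontal orientations of the two cells agree under \(\pi\).

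For the coefficient statement, I will fix a cover \(v\gtrdot w\) in \(W^\Theta\) and a point \(p\in C_w(G/P_\Theta)\). By Nisnevich excision, as in Lemma~\ref{lem:local-attaching-map}, I work in the henselized two-cell neighbourhood of \(p\). I then lift it along \(\pi\) using the product chart \((x,z)\mapsto s_w(x)p_\Theta(z)\) from the proof of Lemma~\ref{lem:partial-vertical-determinant}. In these coordinates the full-flag local attaching map for \(v\gtrdot w\) splits into three blocks:
\begin{itemize}
\item the horizontal face block \(f=\Phi_w^{-1}\Phi_{\widehat v_I}\) together with the deletion permutation;
\item the normal block \(t\mapsto t^{\htc(\gamma^\vee)+1}u(t)\);
\item the vertical block on \(V_w\), given by the action of \(q(t,x)\).
\end{itemize}
The first two blocks come from Lemma~\ref{lem:algebraic-lr-attaching-map}, and the vertical block is the one controlled by Lemma~\ref{lem:partial-vertical-determinant}. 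The matrix is block triangular: the \(z\)-coordinates do not feed back into the horizontal or normal coordinates once we project to \(G/P_\Theta\). Hence the Jacobian determinant at \(p\) factors as the face determinant times the vertical determinant, and the normal power map factors off separately.

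Next I compare the Thom classes. The quotient orientation gives \(\det\nu_{C_w(G/B)}\cong\det V_w\otimes\pi^*\det\nu_{C_w(G/P_\Theta)}\), and similarly for \(v\). The normal line \(L\) of the face \(C_w\subset\overline{C_v}\) is the same \(\gamma\)-root line upstairs and downstairs, because \(\gamma\) is not in \(\Phi_\Theta\) when \(v,w\in W^\Theta\) are distinct cosets. So the connecting morphism of Lemma~\ref{lem:local-attaching-map} on \(G/B\) is the external product of the one on \(G/P_\Theta\) with a map on \(\operatorname{Th}(V_w)\). By monoidality of the Morel--Sawant oriented identifications, that map on \(\operatorname{Th}(V_w)\) is multiplication by \(\langle\det(\text{vertical block})(p)\rangle\), and Lemma~\ref{lem:partial-vertical-determinant} says this is \(\langle 1\rangle\). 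One also needs that moving the \(V_w\) determinant factor past the face and normal factors introduces no extra permutation sign. This holds because \(V_w\) is placed in the same position, first in the quotient determinant, for both \(v\) and \(w\), so any reordering contributes the same \(\langle-1\rangle^{\dim V\cdot(\cdots)}\) on both sides and cancels. What remains on \(G/P_\Theta\) is \(\langle-1\rangle^I\deg^{\A^1}(f)(\htc(\gamma^\vee)+1)_\epsilon\eta\), which is exactly the minimal full-flag coefficient.

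I expect the main obstacle to be this sign bookkeeping: checking that the quotient-orientation convention makes the \(V_w\) factors for \(v\) and for \(w\) cancel exactly, rather than leaving a residual \(\langle-1\rangle^{\dim(P_\Theta/B)}\). My first attempt would be to place \(V_w\) as the leading factor in both quotient determinants. If that fails, I would fall back on the real realization together with the restrict-and-project theorem of Proposition~\ref{prop:lr-boundary} to pin down the sign.
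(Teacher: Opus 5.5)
Your proposal follows the paper's own proof almost line by line. It uses the minimal section, the product chart \((x,z)\mapsto s_w(x)p_\Theta(z)\), a three-block splitting of the full-flag attaching map, and Lemma~\ref{lem:partial-vertical-determinant} to make the vertical block contribute \(\langle1\rangle\). It therefore inherits the gap in that proof, namely the step asserting that the map on \(\operatorname{Th}(V_w)\) is multiplication by \(\langle\det(\text{vertical block})(p)\rangle\).

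The connecting morphism does not see the value of the vertical determinant at \(p\). It sees its order in the normal parameter \(t\), exactly as it does for the normal factor \(t^ru(t)\) in Lemma~\ref{lem:local-attaching-map}. Write \(v=ws_\gamma\) and move along the \(T\)-stable curve from \(C_v\) to \(C_w\). Near \(C_w\), the minimal representatives of \(C_v\) converge in \(G\) only after a torus correction \(\gamma^\vee(t)^{\pm1}\); this is the rank-one identity writing \(u_\gamma(1/t)\dot s_\gamma\) as \(u_{-\gamma}(t)\) times \(\gamma^\vee(t)^{-1}\) times an element of \(U\). On the ordered vertical root lines this correction has determinant \(t^{\pm\langle2\rho_\Theta,\gamma^\vee\rangle}\), where \(2\rho_\Theta=\sum_{\alpha\in\Phi_\Theta^+}\alpha\). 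Equivalently, \(\det\ker d\pi\) has degree \(\pm\langle2\rho_\Theta,\gamma^\vee\rangle\) on that curve. This exponent adds to the normal exponent, and when it is odd it switches \((\cdot)_\epsilon\eta\) between \(\eta\) and \(0\).

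You can see the same discrepancy in the paper's own formulas. The full-flag coefficient of the cells \(v\gtrdot w\) uses the cover \(w_0w\gtrdot w_0v\), with root \(\gamma\). The partial coefficient uses \(\pi_\Theta(w)\gtrdot\pi_\Theta(v)\), with root \(w_\Theta\gamma\). Since \(w_\Theta\rho=\rho-2\rho_\Theta\), one gets \(\htc(\gamma^\vee)-\htc((w_\Theta\gamma)^\vee)=\langle2\rho_\Theta,\gamma^\vee\rangle\). Block triangularity cannot absorb this, because the defect sits in the diagonal vertical entry itself. It is also not the constant sign \(\langle-1\rangle^{\dim P_\Theta/B}\) you were worried about.

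A concrete instance: take \(G=SL_3\), \(\Theta=\{\alpha_1\}\), and \(v=s_2\gtrdot w=e\). Then \(\gamma=\alpha_2\) and \(\langle\alpha_1,\alpha_2^\vee\rangle=-1\) is odd. Remark~\ref{rem:sl3-partial-check} gives the \(\mathbb P^2\) coefficient \(\pm\eta\). The row \(e\to s_2\) of the table in Section~\ref{sec:sl3-example} gives \(0\) for the same minimal full-flag cover, as it must, since \(\Hcell_3(SL_3/B)\cong\KMW_3\). Geometrically, the Schubert line in \(\mathbb P^2\) has normal bundle \(\mathcal O(1)\). Its minimal lift \(P_{\alpha_2}/B\) is a fibre of \(G/B\to G/P_{\alpha_2}\), so it has trivial normal bundle, and hence \(V\) has odd degree on it. No orientation convention can repair a discrepancy of \(0\) versus \(\eta\), so the conclusion itself fails for minimal lifts.

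Your fallback via real realization plus restrict-and-project would expose the discrepancy rather than fix it, because restrict-and-project concerns the ordinary cover \(\pi_\Theta(w)\gtrdot\pi_\Theta(v)\). Since \(w_0(ww_\Theta)=\pi_\Theta(w)\), that cover computes the full-flag coefficient of the maximal lifts \(C_{ww_\Theta}(G/B)\subset\overline{C_{vw_\Theta}(G/B)}\). In the example these are \(s_1\to s_2s_1\), with coefficient \(-\eta\). This is the comparison Remark~\ref{rem:sl3-partial-check} and Corollary~\ref{cor:partial-eta-comparison} actually use. These maximal cells are the open cells of \(\pi^{-1}C_w(G/P_\Theta)\) and \(\pi^{-1}C_v(G/P_\Theta)\), and \(\nu_{C_{ww_\Theta}(G/B)}\cong\pi^*\nu_{C_w(G/P_\Theta)}\). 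That is the natural setting for your Thom-space comparison, since no vertical normal factor arises there.
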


\begin{proof}
The first assertion is the usual Bruhat description of partial flag cells:
minimal representatives \(W^\Theta\) index the cells of \(G/P_\Theta\),
and \(\pi\) identifies \(C_w(G/B)\) with \(C_w(G/P_\Theta)\) for
\(w\in W^\Theta\).

For such \(w\), the differential of \(\pi\) gives the exact sequence of
normal bundles displayed in the statement.  The quotient orientation fixed
in Section~\ref{sec:bruhat-cells} is the determinant identity
\[
  \det\nu_{C_w(G/B)}
  \cong
  \det(V_w)\otimes
  \pi^*\det\nu_{C_w(G/P_\Theta)} ,
\]
using the vertical orientation of
Lemma~\ref{lem:partial-vertical-determinant}.  This is the corresponding
quotient of determinant orientations: the full orientation is
the tensor product of the vertical orientation and the partial
orientation.

Let \(v\gtrdot w\) with \(v,w\in W^\Theta\).  The completed local
attaching map for the corresponding full-flag cover decomposes into the
horizontal partial attaching map and the relative tangent factor of
\(\pi\).  By Lemma~\ref{lem:partial-vertical-determinant}, the determinant
of this relative factor is \(1\), so it acts trivially on the oriented
Thom class.  After removing this vertical determinant factor, the remaining
map is exactly the partial-cell attaching map.  Therefore the
Milnor--Witt coefficient of the partial-flag boundary component is the coefficient of
the corresponding full-flag cover \(v\gtrdot w\), with no extra unit.

The integral cellular boundary for \(G/P_\Theta\) is obtained from the
maximal-flag cellular boundary by restricting to the minimal
representatives and projecting to that span
\cite[Theorem~3.4]{RabeloSanMartin}; equivalently, in codimension one,
the partial coefficient is the full coefficient for the same minimal
cover.  The vertical determinant computation gives the same
restrict-and-project compatibility in the Milnor--Witt cellular complex.
\end{proof}

\begin{remark}
\label{rem:sl3-partial-check}
For \(G=SL_3\) and \(\Theta=\{\alpha_1\}\), the minimal right coset
representatives are
\[
  W^\Theta=\{e,\ s_2,\ s_1s_2\}.
\]
The top Bruhat cover in the partial flag is
\[
  s_1s_2\gtrdot s_2 .
\]
Under the reindexing \(\pi_\Theta(w)=w_0ww_\Theta\) for
\(SL_3/P_{\{\alpha_1\}}\simeq \mathbb P^2\), this ordinary cover computes
the cellular differential from \(e\) to \(s_2\).
The minimal full-flag lift is the same cover.  Since
\[
  s_1s_2=s_2s_{\alpha_1+\alpha_2},
\]
the deleted root is \(\alpha_1+\alpha_2\), whose coroot height is \(2\).
The height factor is
\[
  (2+1)_\epsilon\eta=3_\epsilon\eta=\eta,
\]
up to the sign fixed by the chosen coordinate change.
Lemma~\ref{lem:partial-vertical-determinant} shows that the relative
vertical determinant for the projection to \(SL_3/P_{\{\alpha_1\}}\) is
\(\langle1\rangle\).  Hence the partial coefficient is the same non-zero
minimal full-flag coefficient, with no extra Milnor--Witt unit.
\end{remark}

\begin{theorem}
\label{thm:eta-comparison}
Assume the boundary-data hypothesis in Definition~\ref{def:available-boundary-data}
holds for \(G/B\), and let \(d=\dim(G/B)\).
With the orientations determined by the split pinning and the
fixed horizontal normal forms, let \(w\in W\) have length
\(d-i\) and let \(v\in W\) be a Bruhat cover of \(w\), so
\(\length(v)=d-i+1\).  Put
\[
  \bar w=\pi_\varnothing(w)=w_0w,\qquad
  \bar v=\pi_\varnothing(v)=w_0v .
\]
Then \(\bar w\gtrdot\bar v\), with
\(\ell(\bar w)=i\) and \(\ell(\bar v)=i-1\).  Choose the fixed
horizontal word
\(\bar w=s_1\cdots s_i\), and suppose that deleting the \(I\)-th
reflection gives the reduced word \(\widehat{\bar w}_I\) representing
\(\bar v\), up to the fixed horizontal normal form.  Let
\(\gamma=\gamma(\bar w,\bar v)\) be the root with
\(\bar w=\bar v s_\gamma\).  Then, for \(i\geq2\), the coefficient of
\([v]\) in
\(\partial_i^{\A^1}[w]\) is
\[
  \mu(v,w)
  =
  \langle -1\rangle^I\,
  \deg^{\A^1}\!\left(
      \Phi_{\bar v}^{-1}\circ\Phi_{\widehat{\bar w}_I}
    \right)\,
  (\htc(\gamma^\vee)+1)_\epsilon\,\eta.
\]
Here \(\deg^{\A^1}\) denotes the Milnor--Witt local-degree class of the
root-coordinate change.  For the elementary moves in the normal-form
algorithm it is the class \(\langle 1\rangle\) or \(\langle -1\rangle\),
determined by the corresponding degree of the coordinate change and acting by
multiplication on Milnor--Witt \(K\)-theory.

Equivalently, the cellular \(\A^1\)-boundary is
\[
  \partial_i^{\A^1}=\eta\,E_i^\varnothing
  \qquad (i\geq 2),
\]
and \(\partial_1^{\A^1}=0\).  Equivalently, for \(i\geq2\) and every
Bruhat cell \(w\) of length \(d-i\),
\[
  \partial_i^{\A^1}[w]
  =
  \sum_{\substack{v\in W\\ v\gtrdot w}}
  \frac{c_\varnothing(\pi_\varnothing(w),\pi_\varnothing(v))}{2}\,
  \eta\,[v] .
\]
\end{theorem}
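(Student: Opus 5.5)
The argument assembles the local lemmas of Section~\ref{sec:comparison} into a global statement about $\partial^{\A^1}_i$. First, the combinatorial part: since $w\mapsto w_0w$ is an anti-automorphism of Bruhat order on $W$ with $\ell(w_0w)=d-\ell(w)$, a cover $v\gtrdot w$ with $\ell(w)=d-i$ gives $\bar w\gtrdot\bar v$ with $\ell(\bar w)=i$ and $\ell(\bar v)=i-1$. The cells of codimension $i$ are therefore indexed by elements $\bar w$ of length $i$. By Lemma~\ref{lem:bruhat-a1-cellular}, the component of $\partial_i^{\A^1}$ from $[w]$ to $[v]$ is the connecting morphism for the codimension-one inclusion of the corresponding strata. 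I will identify this with the Bott--Samelson face of the chart $\Phi_{\bar w}$ obtained by deleting the $I$-th reflection. This is the same face that computes $c_\varnothing(\bar w,\bar v)$ in Proposition~\ref{prop:lr-boundary}, once the orientation conventions of Lemma~\ref{lem:thom-orientation-charts} are used: the horizontal word for $\bar w$ orients the cell, and the normal orientation is the determinant complement.

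Next, I apply Lemma~\ref{lem:algebraic-lr-attaching-map} to the cover $\bar w\gtrdot\bar v$. It puts the completed henselian attaching map in the form required by Lemma~\ref{lem:local-attaching-map}, with $f=\Phi_{\bar v}^{-1}\Phi_{\widehat{\bar w}_I}$, exponent $r=\htc(\gamma^\vee)+1$, and leading unit $u(0)=1$. Lemma~\ref{lem:local-attaching-map} then gives the coefficient $\langle-1\rangle^I\deg^{\A^1}(f)\,r_\epsilon\eta$. Three further lemmas identify the factors:
\begin{itemize}
\item Lemma~\ref{lem:deletion-orientation} identifies the deletion factor $\langle-1\rangle^I$.
\item Lemma~\ref{lem:mw-coordinate-degree} gives $\deg^{\A^1}(f)=\langle-1\rangle^{N}$ with $N\equiv$ the parity of $\deg_{\mathrm{coord}}(f)(\R)$.
\item Lemma~\ref{lem:height-normal-factor} gives the normal factor $(\htc(\gamma^\vee)+1)_\epsilon\eta$.
\end{itemize}
This proves the displayed formula for $\mu(v,w)$ when $i\ge2$. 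For $i=1$, the Morel--Sawant complex has $\Ccell_0=\Z$, and the map $\KMW_1\to\Z$ vanishes because there are no nonzero morphisms of strictly $\A^1$-invariant sheaves from $\KMW_1$ to the constant sheaf $\Z$ (contraction). Alternatively, the same height computation applies, since the relevant $\gamma$ is simple and $2_\epsilon\eta=0$.

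Finally, I convert to the matrix form. I use the relations $\langle-1\rangle\eta=-\eta$ and $r_\epsilon\eta\in\{0,\eta\}$, with $r_\epsilon\eta=\eta$ exactly when $\htc(\gamma^\vee)$ is even. These give
\[
\mu(v,w)=(-1)^I\deg_{\mathrm{coord}}(f)\cdot\tfrac12\bigl(1+(-1)^{\htc(\gamma^\vee)}\bigr)\,\eta .
\]
By Proposition~\ref{prop:lr-boundary}, this equals $\tfrac12c_\varnothing(\bar w,\bar v)\eta$. By definition, $\tfrac12c_\varnothing(\pi_\varnothing(w),\pi_\varnothing(v))$ is the $([v],[w])$ entry of $E_i^\varnothing=P_{i-1}^{-1}D_i^\varnothing P_i$. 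Non-covers contribute zero because the cellular differential is supported on codimension-one closure relations. This gives $\partial_i^{\A^1}=\eta E_i^\varnothing$.

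The main obstacle I expect is the middle step: checking that the orientation conventions genuinely match. The cited real degree $\deg_{\mathrm{coord}}$ and deletion sign are computed using the horizontal word orientation of the cell $\bar w$. The Morel--Sawant coefficient, by contrast, uses Thom classes of normal bundles of the strata indexed by $w=w_0\bar w$. I would handle this with the determinant identity of Lemma~\ref{lem:thom-orientation-charts}: the fixed top orientation of $G/B$ turns tangent-determinant changes into the same normal-determinant changes. The point to verify is that no global sign, depending only on $i$ or on $\ell(w)$, is introduced by swapping the roles of the tangent and normal determinants. If such a sign appears uniformly in each degree, it can be absorbed by rescaling the generators, so it does not affect $\eta E_i^\varnothing$ up to the stated orientation choice.
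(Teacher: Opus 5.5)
Your proposal is correct and follows essentially the same route as the paper's proof: index the $[w]\to[v]$ component by the ordinary cover $\bar w\gtrdot\bar v$ through the complementary normal coordinates, put it in the local form of Lemma~\ref{lem:local-attaching-map} via Lemma~\ref{lem:algebraic-lr-attaching-map}, evaluate the factors with Lemmas~\ref{lem:mw-coordinate-degree} and~\ref{lem:height-normal-factor}, convert to $\tfrac12 c_\varnothing(\bar w,\bar v)\eta$ using $\langle-1\rangle\eta=-\eta$ and Proposition~\ref{prop:lr-boundary}, and obtain $\partial_1^{\A^1}=0$ from $\Hom(\KMW_1,\Z)=0$. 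Two minor remarks: your alternative $i=1$ argument via $2_\epsilon\eta=0$ is not needed and does not literally apply, since $\Ccell_0=\Z$ rather than $\KMW_0$; and absorbing a degreewise sign into the generators would change the fixed orientations, so the orientation matching should rest on Lemma~\ref{lem:thom-orientation-charts}, exactly as the paper does.
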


\begin{proof}
The cellular differential is the connecting morphism in the
Morel--Sawant cofiber sequence
\(\Omega_{i-1}\to\Omega_i\to \Omega_i/\Omega_{i-1}\), after applying the identifications determined by the chosen orientations.  The normal
coordinate used for the stratum \(C_w\) is the Bruhat coordinate determined
by the complementary element \(w_0w\).  Hence
the codimension-one attaching component from \(C_w\) to \(C_v\) is indexed,
in these normal coordinates, by the ordinary Bruhat cover
\(\bar w=w_0w\gtrdot\bar v=w_0v\).

By Lemma~\ref{lem:algebraic-lr-attaching-map}, the cover
\(\bar w\gtrdot\bar v\) has, in
Chevalley coordinates, the local form required by
Lemma~\ref{lem:local-attaching-map}: the determinant permutation is the
\(I\)-th deletion, the face coordinate change is
\(\Phi_{\bar v}^{-1}\Phi_{\widehat{\bar w}_I}\), and the normal coordinate
has initial form \(t^{\htc(\gamma^\vee)+1}\) times a unit \(u(t)\) with
\(u(0)=1\).
Lemma~\ref{lem:local-attaching-map} therefore gives
\[
  \mu(v,w)
  =
  \langle -1\rangle^I
  \deg^{\A^1}(\Phi_{\bar v}^{-1}\Phi_{\widehat{\bar w}_I})
  (\htc(\gamma^\vee)+1)_\epsilon\eta.
  \]
The middle factor is precisely the Milnor--Witt local-degree class of this
coordinate change, computed by Lemma~\ref{lem:mw-coordinate-degree}.  This
gives the displayed formula for \(\mu(v,w)\).

Let \(h=\htc(\gamma^\vee)\).  The identity for \(r_\epsilon\eta\) gives:
\((h+1)_\epsilon\eta\) is \(\eta\) for \(h\) even and \(0\) for \(h\)
odd.  If \(h\) is odd, both \(\mu(v,w)\) and the ordinary boundary
coefficient \(c_\varnothing(\bar w,\bar v)\) vanish.  If \(h\) is even,
the normal factor is \(\eta\), and Lemma~\ref{lem:mw-coordinate-degree} identifies the
degree of the coordinate change as either \(\langle 1\rangle\) or
\(\langle -1\rangle\).  Let
\(\sigma(\bar w,\bar v)\in\{\pm1\}\) be the corresponding determinant sign of the
Chevalley coordinate transition.  This is the same integer sign denoted
\(\deg_{\mathrm{coord}}\) in Proposition~\ref{prop:lr-boundary}; after
specialization to \(\R\) it is the ordinary real degree of the coordinate change.  Since
\(\langle -1\rangle\eta=-\eta\), the Milnor--Witt unit
\[
  \langle -1\rangle^I
  \deg^{\A^1}(\Phi_{\bar v}^{-1}\Phi_{\widehat{\bar w}_I})
\]
acts on \(\eta\) by the integer sign \((-1)^I\sigma(\bar w,\bar v)\).  By the
signed boundary formula in Proposition~\ref{prop:lr-boundary}, and using
that \(h\) is even so \(1+(-1)^h=2\), this integer sign is exactly
\(c_\varnothing(\bar w,\bar v)/2\).  Therefore
\[
  \mu(v,w)=
  \frac{c_\varnothing(\pi_\varnothing(w),\pi_\varnothing(v))}{2}\eta
\]
for every cover contributing to \(\partial_i^{\A^1}\) with \(i\geq2\).

The boundary is indexed by codimension, while the ordinary real Bruhat
boundary is indexed by dimension.  The preceding paragraph shows that the
matrix in degree \(i\geq2\) is \(D_i^\varnothing\) transported through the
permutation \(P_i\):
\[
  E_i^\varnothing=P_{i-1}^{-1}D_i^\varnothing P_i .
\]
Finally, \(\partial_1^{\A^1}=0\) because
\(\Hom_{\mathrm{Ab}_{\A^1}}(\KMW_1,\Z)=0\), as in Morel--Sawant's
calculation for \(G/B\).
\end{proof}

\begin{corollary}
\label{cor:partial-eta-comparison}
Assume the boundary-data hypothesis in Definition~\ref{def:available-boundary-data}
holds for \(X_\Theta=G/P_\Theta\).  Let
\(E_i^\Theta=P_{i-1}^{-1}D_i^\Theta P_i\) be the
reindexed matrices obtained from the
restrict-and-project boundary computation.  By
the projection compatibility of
Proposition~\ref{lem:partial-flag-orientations}, the partial-flag
cellular differential is
\[
  \partial_i^{\A^1}=\eta\,E_i^\Theta
  \qquad (i\geq 2),
  \qquad
  \partial_1^{\A^1}=0.
\]
Equivalently, for \(i\geq2\) and every \(w\in W^\Theta\) of length
\(d-i\),
\[
  \partial_i^{\A^1}[w]
  =
  \sum_{\substack{v\in W^\Theta\\ v\gtrdot w}}
  \frac{c_\Theta(\pi_\Theta(w),\pi_\Theta(v))}{2}\,\eta\,[v] .
\]
\end{corollary}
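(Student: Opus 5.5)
The plan is to deduce the corollary from Theorem~\ref{thm:eta-comparison} together with Proposition~\ref{lem:partial-flag-orientations}, working one cover at a time. Fix \(i\geq2\) and \(w\in W^\Theta\) with \(\ell(w)=d-i\). By Lemma~\ref{lem:bruhat-a1-cellular}, the only possible nonzero components of \(\partial_i^{\A^1}[w]\) are indexed by covers \(v\gtrdot w\) with \(v\in W^\Theta\). So it suffices to identify the Milnor--Witt coefficient \(\mu_\Theta(v,w)\) of each such cover with \(\tfrac12 c_\Theta(\pi_\Theta(w),\pi_\Theta(v))\,\eta\).

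\emph{Step 1: reduce to the full flag.} Proposition~\ref{lem:partial-flag-orientations} says that, with the quotient determinant orientations, the partial coefficient equals the coefficient of the minimal full-flag cover \(v\gtrdot w\) in \(G/B\). The vertical factor contributes \(\langle1\rangle\) by Lemma~\ref{lem:partial-vertical-determinant}. I then apply the local computation behind Theorem~\ref{thm:eta-comparison}, namely Lemmas~\ref{lem:local-attaching-map}, \ref{lem:algebraic-lr-attaching-map}, \ref{lem:mw-coordinate-degree} and~\ref{lem:height-normal-factor}. In the partial setting the normal coordinates are indexed by \(\pi_\Theta(w)=w_0ww_\Theta\) rather than \(w_0w\), and \(\pi_\Theta\) reverses Bruhat order on \(W^\Theta\). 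This gives
\[
\mu_\Theta(v,w)=\langle-1\rangle^I\deg^{\A^1}\bigl(\Phi_{\pi_\Theta(v)}^{-1}\Phi_{\widehat{\pi_\Theta(w)}_I}\bigr)(\htc(\gamma^\vee)+1)_\epsilon\eta .
\]

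\emph{Step 2: convert to an integer sign.} Since \(r_\epsilon\eta\) is \(\eta\) or \(0\) according to parity, and \(\langle-1\rangle\eta=-\eta\), this unit acts as an integer sign times \(\eta\), or as \(0\). Proposition~\ref{prop:lr-boundary}, combined with the restrict-and-project description of \(c_\Theta\), identifies that integer with \(c_\Theta(\pi_\Theta(w),\pi_\Theta(v))/2\). This is exactly the parity argument in the proof of Theorem~\ref{thm:eta-comparison}. Assembling the covers then gives \(\partial_i^{\A^1}=\eta P_{i-1}^{-1}D_i^\Theta P_i=\eta E_i^\Theta\).

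\emph{Step 3: degree one.} The vanishing \(\partial_1^{\A^1}=0\) follows as in the full flag, since \(\Hom(\KMW_1,\Z)=0\) in strictly \(\A^1\)-invariant sheaves. It is also consistent with \(D_1^\Theta=0\).

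The main obstacle is Step 1, specifically checking that the orientation conventions match. The partial normal orientation is the quotient orientation, while the reindexing uses \(w_0ww_\Theta\). I must check that deleting the \(I\)-th letter of the fixed word for \(\pi_\Theta(w)\), which lies in \(W^\Theta\), is the same deletion that gives the full-flag coefficient for the minimal lift. Right multiplication by \(w_\Theta\) could reorder root lines. I would first try to absorb that reordering into \(\det V_w\) using the fixed convex order on \(\Phi_\Theta^+\), so that it cancels in the quotient determinant identity. Failing that, I would verify directly that it contributes a square class, which is killed after multiplication by \(\eta\) up to sign consistent with \(c_\Theta\).
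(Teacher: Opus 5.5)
Your outline follows the paper's route: Theorem~\ref{thm:eta-comparison}, then Proposition~\ref{lem:partial-flag-orientations} with vertical factor \(\langle1\rangle\), then the sign/parity conversion through Proposition~\ref{prop:lr-boundary} and restrict-and-project, and \(\Hom(\KMW_1,\Z)=0\) in degree one. The gap is the step you leave open, which is exactly where the partial case differs from the full flag, and it cannot be closed the way you propose. In Step~1 you read the Proposition as saying that the partial coefficient of the cellular pair \((w,v)\) equals the \(G/B\) cellular coefficient of the same pair of minimal lifts. Theorem~\ref{thm:eta-comparison} computes that \(G/B\) coefficient on the cover \(w_0w=\pi_\Theta(w)w_\Theta\gtrdot w_0v=\pi_\Theta(v)w_\Theta\) of \emph{maximal} coset representatives, whose root is \(w_\Theta(\gamma)\) rather than \(\gamma\). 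This is not a reordering of root lines, because the coroot-height parity can change, so the two coefficients can differ by more than a unit. Take \(\mathbb P^2=SL_3/P_{\{\alpha_1\}}\) and the pair \(e\to s_2\). The \(G/B\) coefficient is \(0\): it is the row \(w_0\gtrdot s_2s_1\) of the table in Section~\ref{sec:sl3-example}, with root \(\alpha_2\). The \(\mathbb P^2\) differential is \(\pm\eta\): the cover is \(s_1s_2\gtrdot s_2\), with root \(\alpha_1+\alpha_2=s_1(\alpha_2)\). So no bookkeeping in \(\det V_w\) can reconcile them. Your fallback also fails on its own terms. A square class is already \(\langle1\rangle\), and a non-square unit is not absorbed by \(\eta\), since \(\langle a\rangle\eta=\eta+\eta^2[a]\); only \(\langle\pm1\rangle\) act on \(\eta\) as \(\pm1\).

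The paper's two-line proof instead applies the Proposition to the ordinary cover \(\pi_\Theta(w)\gtrdot\pi_\Theta(v)\), with both elements in \(W^\Theta\); this cover is its own minimal full-flag lift, and Remark~\ref{rem:sl3-partial-check} uses it this way. The deleted letter is then the same letter of the same fixed word for \(\pi_\Theta(w)\) in \(G/P_\Theta\) and in \(G/B\), so no \(w_\Theta\)-reordering enters the cover. The vertical block contributes \(\langle1\rangle\) by Lemma~\ref{lem:partial-vertical-determinant}. The local formula gives \(\tfrac12\,c_\varnothing(\pi_\Theta(w),\pi_\Theta(v))\eta\), and restrict-and-project turns this into \(\tfrac12\,c_\Theta(\pi_\Theta(w),\pi_\Theta(v))\eta\).

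The element \(w_\Theta\) enters only when you identify the normal slice of the partial cell. Since \(w_0w=\pi_\Theta(w)w_\Theta\) with lengths adding, the normal root lines of \(C_w(G/B)\), after translation by \(w_0\), are the tangent root lines of \(C_{\pi_\Theta(w)}\) together with the \(\ell(w_\Theta)\) lines of \(V_w\). This is what lets the quotient orientation of \(\nu_{C_w(G/P_\Theta)}\) be matched with the orientation coming from the word of \(\pi_\Theta(w)\). Your first idea, absorbing the \(w_\Theta\)-part into \(\det V_w\), belongs here, as a statement about the normal slice of a single cell rather than a comparison between two different covers.
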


\begin{proof}
Apply Theorem~\ref{thm:eta-comparison} on the full flag and then pass to
minimal coset representatives using
Proposition~\ref{lem:partial-flag-orientations}.  The proposition shows
that the vertical determinant block in \(G/B\to G/P_\Theta\) cancels in
the comparison of orientations, so no additional Milnor--Witt unit appears.
\end{proof}

\begin{theorem}
\label{thm:chain-model}
Assume the boundary-data hypothesis in Definition~\ref{def:available-boundary-data}
holds for \(X_\Theta\).  The oriented cellular
\(\A^1\)-chain complex of \(X_\Theta\) is the finite complex
\[
  0\to
  \bigoplus_{\length(w)=0}\KMW_d
  \xrightarrow{\eta E_d^\Theta}
  \cdots
  \xrightarrow{\eta E_2^\Theta}
  \bigoplus_{\length(w)=d-1}\KMW_1
  \xrightarrow{0}
  \Z\to 0,
\]
where \(d=\dim X_\Theta\).
\end{theorem}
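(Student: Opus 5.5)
The proof assembles results already established rather than computing anything new.

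\emph{Chain groups.} Lemma~\ref{lem:bruhat-a1-cellular} identifies \(\Ccell_i(X_\Theta)\) with \(\bigoplus_{\ell(w)=d-i}\KMW_i\) for \(i\geq1\) and \(\Ccell_0(X_\Theta)\) with \(\Z\). The open filtration \(\Omega_\bullet\) stops at \(\Omega_d=X_\Theta\). The unique codimension-\(d\) cell is the point cell \(w=e\). Hence the complex is concentrated in degrees \(0,\dots,d\) and is finite, with leftmost term \(\bigoplus_{\ell(w)=0}\KMW_d\).

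\emph{Differentials.} For \(i\geq2\), Corollary~\ref{cor:partial-eta-comparison} gives \(\partial_i^{\A^1}=\eta E_i^\Theta\). That corollary is Theorem~\ref{thm:eta-comparison} on \(G/B\), transported to \(G/P_\Theta\) by Proposition~\ref{lem:partial-flag-orientations}, which introduces no extra unit. This fills in every arrow \(\eta E_d^\Theta,\dots,\eta E_2^\Theta\).

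\emph{Degree one.} The arrow \(\bigoplus\KMW_1\to\Z\) vanishes by the Hom-vanishing \(\Hom(\KMW_1,\Z)=0\) in strictly \(\A^1\)-invariant sheaves. This is the argument recorded at the end of Theorem~\ref{thm:eta-comparison}. It is independent of \(\Theta\), so it applies verbatim to partial flags. It is also consistent with the integral fact \(D_1^\Theta=0\) noted in Section~\ref{sec:real-boundary}.

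\emph{Squares vanish.} One should check that these maps really form a complex. This follows from \(E_{i-1}^\Theta E_i^\Theta=P_{i-2}^{-1}D_{i-1}^\Theta D_i^\Theta P_i=0\), using \((D^\Theta)^2=0\). At the bottom, the composite into \(\Z\) is zero because \(\partial_1^{\A^1}=0\). Morel--Sawant's construction already guarantees \(\partial^2=0\), so this is only a consistency check and not logically needed.

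\emph{Main obstacle.} The only delicate point is bookkeeping: the reindexing by \(\pi_\Theta\) must match codimension \(i\) with the degree of \(E_i^\Theta\), so that the source and target of \(\eta E_i^\Theta\) are exactly \(\Ccell_i\) and \(\Ccell_{i-1}\). I would check this directly from \(\ell(\pi_\Theta(w))=d-\ell(w)\) and the definition of \(P_i\) as a map from \(\bigoplus_{\ell(w)=d-i}\Z[w]\) to \(\bigoplus_{\ell(u)=i}\Z[u]\).
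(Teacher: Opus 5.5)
Your proposal is correct and follows essentially the paper's own argument: the chain groups come from Lemma~\ref{lem:bruhat-a1-cellular}, the differentials in degrees \(i\geq2\) from Theorem~\ref{thm:eta-comparison} transported to \(G/P_\Theta\) via Corollary~\ref{cor:partial-eta-comparison}, and \(\partial_1^{\A^1}=0\) from the vanishing \(\Hom(\KMW_1,\Z)=0\) already packaged in that corollary. Your check that \(\eta^2E_{i-1}^\Theta E_i^\Theta=0\) and your verification that \(E_i^\Theta\) maps \(\Ccell_i\) to \(\Ccell_{i-1}\) are harmless extras that the paper's proof leaves implicit.
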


\begin{proof}
Lemma~\ref{lem:bruhat-a1-cellular} gives the chain groups.  The comparison
theorem for the full flag, together with
Corollary~\ref{cor:partial-eta-comparison} for the projected partial flag,
identifies every codimension-one attaching component in degrees
\(i\geq2\) with the corresponding entry of \(D_i^\Theta\) multiplied
by \(\eta\).  The degree-one statement is included in
Corollary~\ref{cor:partial-eta-comparison}.  This gives exactly the
displayed complex.
\end{proof}

\section{Smith decomposition and applications}
\label{sec:smith}

Let \(m_i(q)\) denote the number of elementary Smith summands
\[
  0\longrightarrow \Z
  \xrightarrow{q}
  \Z\longrightarrow 0
\]
placed in degrees \(i+1\) and \(i\) in the integral chain complex
\[
  \left(
    \bigoplus_{\ell(u)=\bullet}\Z[u],
    D_\bullet^\Theta
  \right),
\]
or equivalently, after the length-reversing permutation \(P_\bullet\), in
\[
  \left(
    \bigoplus_{\ell(w)=d-\bullet}\Z[w],
    E_\bullet^\Theta
  \right).
\]
Since \(D_1^\Theta=0\), one has \(m_0(q)=0\) for every \(q\).
Let \(b_i\) be the rank of the free homology of \(D_\bullet^\Theta\).
Define
\[
  {}_{q\eta}\KMW_i
  =
  \ker\!\left(q\eta\colon \KMW_i\to \KMW_{i-1}\right)
\]
and
\[
  \KMW_i/q\eta
  =
  \coker\!\left(q\eta\colon \KMW_{i+1}\to \KMW_i\right).
\]
For \(q=1\), \(\KMW_i/\eta\simeq \KM_i\), the Milnor \(K\)-theory sheaf.

\begin{theorem}
\label{thm:smith-formula}
Assume the boundary-data hypothesis in Definition~\ref{def:available-boundary-data}
holds for \(X_\Theta\).
For \(i\geq 1\),
\[
  \begin{aligned}
  \Hcell_i(X_\Theta)
  &\cong
  (\KMW_i)^{b_i}
  \oplus
  \bigoplus_{q\geq 1}(\KMW_i/q\eta)^{m_i(q)}\\
  &\qquad{}\oplus
  \bigoplus_{q\geq 1}({}_{q\eta}\KMW_i)^{m_{i-1}(q)}.
  \end{aligned}
\]
Moreover \(\Hcell_0(X_\Theta)\cong \Z\).
\end{theorem}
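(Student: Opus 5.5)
The plan is to start from the chain model of Theorem~\ref{thm:chain-model}: in degrees \(\geq1\) the complex is \(\KMW_\bullet\otimes_{\Z}(\Z^{n_\bullet},E_\bullet^\Theta)\), with the \(\eta\) twist, and in degree zero it is \(\Z\) with zero incoming map. More precisely, \(\Ccell_i=\KMW_i\otimes_\Z F_i\), where \(F_i=\bigoplus_{\ell(w)=d-i}\Z[w]\), and \(\partial_i=\eta\otimes E_i^\Theta\) for \(i\geq2\). Since \(E_\bullet^\Theta\) is a complex of finitely generated free abelian groups (we have \(E^2=0\) because \(D^2=0\)), it splits over \(\Z\) into elementary pieces. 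Choose bases of the \(F_i\), changing by matrices in \(GL(\Z)\), in which the complex is a direct sum of pieces of two kinds: \(\Z\) in a single degree \(i\) with zero differentials, and \(\Z\xrightarrow{q}\Z\) in degrees \(i+1\to i\) with \(q\geq1\). This is the standard Smith normal form argument for bounded complexes of free modules over a PID: at each stage split off \(\ker\), which is a direct summand, and take Smith form of the induced map. The multiplicities of these pieces are \(b_i\) and \(m_i(q)\) by definition, and they agree for \(D_\bullet\) and \(E_\bullet\) because \(P_\bullet\) is an isomorphism of complexes.

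Next, transport the decomposition to the Milnor--Witt complex. An integral change of basis \(A\in GL_n(\Z)\) acts on \((\KMW_i)^n\) by an automorphism of sheaves, since integer matrices act on any abelian sheaf. Moreover \(\eta\) commutes with integer matrices: for \(A\) integral, \((\eta\otimes E)\circ(1\otimes A)=(1\otimes A')\circ(\eta\otimes E')\) whenever \(EA=A'E'\). So the integral isomorphism of complexes \(F_\bullet\cong\bigoplus(\text{elementary})\) induces an isomorphism of the \(\A^1\)-chain complexes in degrees \(\geq1\), with differential \(\eta q\) on the corresponding pieces. Each elementary piece is then computed directly. A summand \(\Z\) in degree \(i\) gives \(\KMW_i\) in homology. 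A piece \(\Z\xrightarrow{q}\Z\) in degrees \(i+1\to i\), with \(i\geq1\), gives \(\KMW_{i+1}\xrightarrow{q\eta}\KMW_i\). It contributes the cokernel \(\KMW_i/q\eta\) to \(\Hcell_i\) and the kernel \({}_{q\eta}\KMW_{i+1}\) to \(\Hcell_{i+1}\). Reindexing the latter gives the \(m_{i-1}(q)\) term in degree \(i\). Homology of a finite direct sum of complexes in the abelian category of strictly \(\A^1\)-invariant sheaves is the direct sum of the homologies.

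The degree-zero end needs care, because \(\Ccell_0=\Z\) is not \(\KMW_0\otimes F_0\) and \(\partial_1=0\) is not of the form \(\eta E_1\). I would handle this separately. Since \(\partial_1=0\), the complex splits as the truncation in degrees \(\geq1\) plus \(\Z\) in degree \(0\). Hence \(\Hcell_0\cong\Z\), and \(\Hcell_1=\ker(\partial_1)/\mathrm{im}(\partial_2)=\coker(\eta E_2^\Theta)\). This agrees with the formula because \(D_1^\Theta=0\) forces \(m_0(q)=0\), so there are no kernel terms in degree \(1\), and all of \(F_1\) consists of free or cokernel pieces. Here \(F_0\) is \(\Z\) for the single cell of length \(d\) and \(b_0=1\), which is consistent.

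I expect the main obstacle to be justifying that the integral Smith splitting can be done \emph{simultaneously} across all degrees while staying compatible with the factor \(\eta\). One has to check that each step uses only \(GL(\Z)\) base changes and that the splitting \(F_i=Z_i\oplus C_i\) is chosen consistently on both sides of each differential. I would handle this by the usual induction from the top degree \(d\) downward. At each degree, put \(E_i\) into the form \(\mathrm{diag}(q_1,\dots,q_r,0,\dots)\) on the complement of \(\ker E_i\), then restrict to \(\ker E_{i-1}\supseteq\mathrm{im}E_i\), noting that \(\ker\) is saturated. Finally, tensoring with \(\KMW_\bullet\) in the graded sense described above is additive, so it preserves this decomposition.
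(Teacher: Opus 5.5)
Your proposal is correct and follows essentially the same route as the paper: you Smith-decompose the integral complex \(E_\bullet^\Theta\) (equivalently \(D_\bullet^\Theta\)) by unimodular base changes, transport the decomposition to \(\eta E_\bullet^\Theta\) using that integer matrices commute with \(\eta\), and read off the kernel and cokernel of each block \(\KMW_{i+1}\xrightarrow{q\eta}\KMW_i\). Your separate treatment of degree zero via \(\partial_1^{\A^1}=0\) and \(m_0(q)=0\) is how the paper handles the bottom of the complex too, and your inductive splitting spells out the structure theorem that the paper only cites.
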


\begin{proof}
The structure theorem for finite free chain complexes over the PID
\(\Z\) decomposes the complex \(D_\bullet^\Theta\), and
therefore also the reindexed complex \(E_\bullet^\Theta\),
up to integral chain isomorphism, into free homology generators and
two-term elementary complexes \(\Z\xrightarrow{q}\Z\).  Concretely, the
decomposition is
obtained by degreewise changes of basis by unimodular integer matrices.
The same matrices act as automorphisms of direct sums of Milnor--Witt
sheaves, and they commute with the natural operation of multiplying each
integer matrix entry by \(\eta\).  Hence the Smith decomposition of
\(E_\bullet^\Theta\) induces an isomorphic decomposition of the cellular $\mathbb{A}^1$-
chain complex \(\eta E_\bullet^\Theta\) in the abelian category of
strictly \(\A^1\)-invariant sheaves.  No division by an elementary divisor
is involved.

After applying Theorem~\ref{thm:chain-model}, each elementary summand
\(\Z\xrightarrow{q}\Z\) placed in degrees \(i+1\) and \(i\), with
\(i\geq1\), becomes
\[
  0\longrightarrow \KMW_{i+1}
  \xrightarrow{q\eta}
  \KMW_i\longrightarrow 0.
\]
The bottom differential is the separate zero map
\[
  \bigoplus_{\ell(w)=d-1}\KMW_1\longrightarrow \Z,
\]
and \(m_0(q)=0\) records that there is no non-zero elementary Smith block
in degrees \(1\) and \(0\).  The homology of such a two-term summand is
\(\KMW_i/q\eta\) in degree \(i\) and
\({}_{q\eta}\KMW_{i+1}\) in degree \(i+1\).  Therefore the kernel term in
\(\Hcell_i\) comes from the elementary summands counted by
\(m_{i-1}(q)\), namely the summands placed in degrees \(i\) and \(i-1\).
Summing these contributions with the free summands gives the formula.
In particular, possible arithmetic pathologies of the endomorphism
\(q\eta\) over the chosen base field are not suppressed; they are exactly
the kernel and cokernel sheaves displayed in the statement.
\end{proof}

The Smith formula is already a statement over the ground field \(k\): all
base-field dependence is contained in the Milnor--Witt sheaves, while the
matrices \(E_i^\Theta\) are integral matrices.  Since
\(E_\bullet^\Theta\) is obtained from \(D_\bullet^\Theta\) by degreewise
permutation matrices, it has the same Smith data in the same degrees.  A
common simplification occurs when the dimension-indexed complex
\((A_\bullet^\Theta,D_\bullet^\Theta)\) has only unit elementary
divisors.
In that case let
\[
  \beta_j=\rank H_j(A_\bullet^\Theta,D_\bullet^\Theta),
\]
and let \(T_j\) be the number of Smith summands
\[
  \Z\xrightarrow{1}\Z
\]
placed in dimensions \(j+1\) and \(j\).
Set \(T_j=0\) outside \(\{0,\ldots,d\}\).  When \(k=\R\), the topological
cellular boundary is \(\delta^\Theta=2D^\Theta\), and this condition is
equivalent to
\[
  H_j(X_\Theta(\R),\Z)\cong \Z^{\beta_j}\oplus(\Z/2)^{T_j}.
\]
Thus the real Betti and \(2\)-torsion ranks are a convenient way to record
the same integer Smith data, but they are not needed to perform the
motivic computation over a general field.

One should not read the resulting direct sum as saying that the
\(\A^1\)-homology is torsion-free.  The free summands are only the
\(\KMW_i\)-summands.  Each unit elementary summand of \(D_\bullet^\Theta\)
is refined motivically by the two homology sheaves of
the elementary cone
\[
  0\longrightarrow \KMW_{i+1}
  \xrightarrow{\eta}
  \KMW_i\longrightarrow 0,
\]
namely \(\KMW_i/\eta\simeq\KM_i\) and
\({}_{\eta}\KMW_{i+1}\).  Thus the \(K^M\)-terms and the
\(\eta\)-kernel terms are the motivic summands corresponding to the
topological \(\Z/2\)-torsion Smith blocks; they should not be confused
with ordinary torsion sheaves.

\begin{corollary}
\label{cor:two-torsion}
Assume the boundary-data hypothesis in Definition~\ref{def:available-boundary-data}
holds for \(X_\Theta\).
Assume every non-zero elementary divisor of
\((A_\bullet^\Theta,D_\bullet^\Theta)\) is \(1\), and let
\(\beta_j,T_j\) be the associated integer Smith data defined above.  Then,
for \(i\geq 1\),
\[
  \Hcell_i(X_\Theta)
  \cong
  (\KMW_i)^{\beta_i}
  \oplus
  (\KM_i)^{T_i}
  \oplus
  ({}_{\eta}\KMW_i)^{T_{i-1}},
\]
and \(\Hcell_0(X_\Theta)\cong\Z\).
\end{corollary}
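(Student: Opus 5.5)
This is a direct specialization of Theorem~\ref{thm:smith-formula}, so the plan is to feed the unit-divisor hypothesis into that formula and read off the summands. Since every non-zero elementary divisor of \((A_\bullet^\Theta,D_\bullet^\Theta)\) equals \(1\), we have \(m_i(q)=0\) for \(q\geq2\) and \(m_i(1)=T_i\) for every \(i\). This uses the definition of \(T_j\) as the number of Smith blocks \(\Z\xrightarrow{1}\Z\) in dimensions \(j+1,j\). It also uses the observation after Theorem~\ref{thm:smith-formula} that the permutation matrices \(P_\bullet\) carry the Smith data of \(D_\bullet^\Theta\) to that of \(E_\bullet^\Theta\) in the same degrees.

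Next, the free ranks \(b_i\) in Theorem~\ref{thm:smith-formula} are by definition the ranks of the free homology of \(D_\bullet^\Theta\). These are exactly \(\beta_i=\rank H_i(A_\bullet^\Theta,D_\bullet^\Theta)\). Substituting, the three families of summands become \((\KMW_i)^{\beta_i}\), \((\KMW_i/\eta)^{T_i}\), and \(({}_{\eta}\KMW_i)^{T_{i-1}}\). Finally, I would invoke the identification \(\KMW_i/\eta\simeq\KM_i\) recorded before Theorem~\ref{thm:smith-formula}, which is the standard presentation of Milnor--Witt \(K\)-theory modulo \(\eta\). The statement \(\Hcell_0(X_\Theta)\cong\Z\) is copied from Theorem~\ref{thm:smith-formula}.

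The only point needing care is the degree bookkeeping at the ends, which I expect to be the main (mild) obstacle. For \(i=1\) the kernel term involves \(T_0=m_0(1)\). This vanishes because \(D_1^\Theta=0\), so no elementary block sits in degrees \(1,0\), consistent with \(\partial_1^{\A^1}=0\). For \(i=d\) the cokernel term involves \(T_d\), which is \(0\) by the convention \(T_j=0\) outside \(\{0,\dots,d\}\) and because \(D_{d+1}^\Theta=0\). I would check these two boundary cases explicitly against the elementary two-term complexes \(\KMW_{i+1}\xrightarrow{\eta}\KMW_i\), and the formula then follows.
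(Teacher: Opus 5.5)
Your proposal is correct and is essentially the paper's proof: both specialize Theorem~\ref{thm:smith-formula} using \(m_i(1)=T_i\), \(m_i(q)=0\) for \(q\geq2\), \(b_i=\beta_i\) (the permutations \(P_\bullet\) preserve the Smith data degreewise), and \(\KMW_i/\eta\simeq\KM_i\). Your explicit end-case check is a harmless extra; note only that \(T_d=0\) holds because there are no cells of dimension \(d+1\) (so \(D_{d+1}^\Theta=0\)), not because of the convention \(T_j=0\) outside \(\{0,\ldots,d\}\), since \(d\) lies in that range.
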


\begin{proof}
The complex \(E_\bullet^\Theta\) is obtained from \(D_\bullet^\Theta\)
through the degree-preserving permutation
\(P_\bullet\).  Thus its free homology in degree \(i\) has rank
\(\beta_i\), and a unit elementary summand
\(\Z\xrightarrow{1}\Z\) in dimensions \(i+1,i\) contributes the cokernel
term \(\KMW_i/\eta\simeq\KM_i\) in cellular degree \(i\).  The unit
summands in dimensions \(i,i-1\), counted by \(T_{i-1}\), contribute the
kernel term \({}_{\eta}\KMW_i\).  This is exactly
Theorem~\ref{thm:smith-formula} with all elementary divisors equal to
\(1\).
\end{proof}

\begin{corollary}
\label{cor:lr-tables}
Let \(X_\Theta=G/P_\Theta\) be a split semisimple simply connected partial
flag variety over a perfect field \(k\) with
\(\operatorname{char}k\neq2\) whose root datum and parabolic type are
among the available computations in classical type \(B_n,C_n,D_n\) with
\(n\leq 7\), or in exceptional type \(F_4,E_6,E_7\).  Let
\(E_\bullet^\Theta=P_{\bullet-1}^{-1}D_\bullet^\Theta P_\bullet\) be the
reindexed matrices obtained from the signed boundary
computation, and let \(b_i,m_i(q)\) be the Smith data of this integer
complex.  Then the cellular
\(\A^1\)-homology over \(k\) is given by the Smith formula of
Theorem~\ref{thm:smith-formula}.

If, moreover, the computed Smith decomposition of
\(D_\bullet^\Theta\) has only unit non-zero elementary divisors, or
equivalently the split real specialization has integral homology
\[
  H_j(X_\Theta(\R),\Z)\cong \Z^{\beta_j}\oplus(\Z/2)^{T_j},
\]
then
\[
  \Hcell_i(X_\Theta)
  \cong
  (\KMW_i)^{\beta_i}
  \oplus
  (\KM_i)^{T_i}
  \oplus
  ({}_{\eta}\KMW_i)^{T_{i-1}}
  \qquad (i\geq 1),
\]
with \(\Hcell_0(X_\Theta)\cong\Z\).  Equivalently, every
available matrix \(D_i^\Theta\) has a cellular \(\A^1\)-homology lift
over \(k\); when all non-zero Smith elementary divisors are \(1\), this lift is obtained by replacing
each elementary summand \(\Z\xrightarrow{1}\Z\) of \(D_\bullet^\Theta\) by
\(\KMW_{i+1}\xrightarrow{\eta}\KMW_i\).
\end{corollary}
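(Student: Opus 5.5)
The plan is to deduce Corollary~\ref{cor:lr-tables} formally from the chain model and the Smith formula already proved, with no new local computation.

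\emph{Step 1: the hypothesis holds.} A partial flag of type \(B_n,C_n,D_n\) with \(n\leq7\), or of type \(F_4,E_6,E_7\), satisfies the boundary-data hypothesis of Definition~\ref{def:available-boundary-data}. Indeed, the signed normal-form matrices \(\delta_i^\Theta\) of \cite{LambertRabelo2026}, with their deletion positions and coordinate-change degrees, are exactly the input that definition requires. These root systems contain no \(G_2\) subsystem, so every rank-two subsystem met by a Coxeter move is of type \(A_1\times A_1\), \(A_2\) or \(B_2=C_2\). Hence Lemmas~\ref{lem:normalized-chevalley-compatibility}--\ref{lem:mw-coordinate-degree} apply verbatim.

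\emph{Step 2: the chain model and the Smith formula.} Theorem~\ref{thm:chain-model} (through Theorem~\ref{thm:eta-comparison} and Corollary~\ref{cor:partial-eta-comparison}) identifies the oriented cellular complex with \(\eta E_\bullet^\Theta\), where \(\partial_1^{\A^1}=0\). Here \(E_\bullet^\Theta=P_{\bullet-1}^{-1}D_\bullet^\Theta P_\bullet\) and \(D_\bullet^\Theta=\delta_\bullet^\Theta/2\), which is integral by Proposition~\ref{prop:lr-boundary}. Theorem~\ref{thm:smith-formula} then gives the first assertion with the Smith data \(b_i,m_i(q)\) of \(E_\bullet^\Theta\). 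These data agree with those of \(D_\bullet^\Theta\), because the two complexes differ only by the degreewise permutations \(P_\bullet\).

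\emph{Step 3: the two formulations of the unit-divisor hypothesis agree.} Over \(\R\), the cellular boundary is \(\delta^\Theta=2D^\Theta\). Decompose \(D^\Theta\) into free summands and blocks \(\Z\xrightarrow{q}\Z\). Then \(\delta^\Theta\) decomposes into free summands and blocks \(\Z\xrightarrow{2q}\Z\), so
\[
H_j(X_\Theta(\R),\Z)\cong\Z^{b_j}\oplus\bigoplus_q(\Z/2q)^{m_j(q)}.
\]
This has the form \(\Z^{\beta_j}\oplus(\Z/2)^{T_j}\) exactly when \(m_j(q)=0\) for all \(q>1\). In that case \(\beta_j=b_j\) and \(T_j=m_j(1)\).

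\emph{Step 4: conclusion and the main obstacle.} Under the unit-divisor condition, Corollary~\ref{cor:two-torsion} gives the displayed formula, using \(\KMW_i/\eta\simeq\KM_i\). The final ``lift'' sentence is a restatement of the elementary-block replacement in the proof of Theorem~\ref{thm:smith-formula}.

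The main obstacle is Step~1, not the homological algebra. One must check that the cited tables for these types use the same pinning, the same horizontal normal forms, and the same deletion conventions as Section~\ref{sec:bruhat-cells}. Only then do the integers \(c_\Theta\) agree with \((-1)^I\deg_{\mathrm{coord}}(1+(-1)^{\htc(\gamma^\vee)})\) in our coordinates. This also requires the restrict-and-project description for \(G/P_\Theta\) to match Proposition~\ref{lem:partial-flag-orientations}.

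The first check would be that every Coxeter move in the cited normal-form algorithm is one of the moves covered by Lemma~\ref{lem:rank-two-determinants}. Any global sign discrepancy in the conventions would be absorbed by a diagonal \(\pm1\) change of basis. Such a change does not alter the Smith data, so the homology statement is robust even if the chain-level identification needs a sign adjustment.
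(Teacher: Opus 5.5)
Your proposal is correct and is essentially the paper's proof. The chain model $\partial_i^{\A^1}=\eta E_i^\Theta$ from Theorem~\ref{thm:eta-comparison} and Corollary~\ref{cor:partial-eta-comparison} is fed into Theorem~\ref{thm:smith-formula}; the real comparison $\delta^\Theta=2D^\Theta$, under which a block $\Z\xrightarrow{q}\Z$ contributes $\Z/2q$, gives the equivalence of the two forms of the hypothesis; and Corollary~\ref{cor:two-torsion} then yields the displayed decomposition. The paper does not argue the convention-matching you flag as the main obstacle either; it absorbs it into the boundary-data hypothesis of Definition~\ref{def:available-boundary-data}, which takes the cited normal-form words, deletion positions, and coordinate-change degrees as input.
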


\begin{proof}
Theorem~\ref{thm:eta-comparison} for the full flag and
Corollary~\ref{cor:partial-eta-comparison} for projected partial flags
identify the cellular $\mathbb{A}^1$-boundary with the length-reversing
reindexing of \(D_i^\Theta\), multiplied by \(\eta\).  Therefore the
Smith data of the integer matrices enter directly into
Theorem~\ref{thm:smith-formula}.  The equivalence with ordinary
\(2\)-torsion is the elementary Smith comparison for the real cellular
complex: the topological boundary is \(2D_\bullet^\Theta\), so an
elementary summand
\(\Z\xrightarrow{q}\Z\) contributes \(\Z/(2q)\) to the corresponding
ordinary cellular homology summand.  Thus the ordinary torsion is a sum of
\(\Z/2\)'s exactly when every non-zero Smith elementary divisor of \(D_\bullet^\Theta\) is
\(q=1\).  Under this condition,
Corollary~\ref{cor:two-torsion} gives the displayed decomposition.  The
last sentence is the same statement on each elementary Smith summand.
\end{proof}

\begin{remark}
\label{rem:type-a-wendt}
In type \(A\), the signed coefficient formula supplies the integer
matrices \(D_i^\Theta\), and Hudson--Matszangosz--Wendt prove via
Witt-sheaf cohomology and Chow--Witt rings of partial flag varieties that
the integral cohomology torsion of the split real specialization is
\(2\)-torsion.  Since \(X_{\Theta,\R}(\R)\) is a finite CW complex, the
universal coefficient theorem transfers this torsion statement between
integral cohomology and integral homology.  Thus the type \(A\) case falls
under Corollary~\ref{cor:two-torsion}.  After specialization to \(\R\),
the result is compatible with the Hudson--Matszangosz--Wendt ring-level
calculations: their work supplies the topological \(2\)-torsion input,
while the present theorem gives the corresponding Milnor--Witt cellular
chain complex and homology sheaves over every perfect base field
\(k\) with \(\operatorname{char}k\neq2\) in the range covered by
Definition~\ref{def:available-boundary-data}.

There is also a direct additive comparison with Chow--Witt groups.
Morel--Sawant prove that, for a cellular smooth scheme \(X\) and any
strictly \(\A^1\)-invariant sheaf \(M\), Nisnevich cohomology is computed
by the cellular cochain complex \cite[Proposition~2.27]{MorelSawant}
\[
  C^n_{\cell}(X;M)
  =
  \Hom_{\mathbf{Ab}_{\A^1}(k)}(\Ccell_n(X),M).
\]
Taking \(M=\KMW_p\) gives the untwisted Chow--Witt group
\[
  \widetilde{CH}^{p}(X_\Theta)
  =
  H^p_{\mathrm{Nis}}(X_\Theta,\KMW_p)
  \cong
  H^p\Hom(\Ccell_*(X_\Theta),\KMW_p).
\]
Since \(\Hom(\KMW_n,\KMW_p)\cong \KMW_{p-n}(k)\), the Smith
decomposition of \(E_\bullet^\Theta\) gives the additive group formula
\[
  \widetilde{CH}^{p}(X_\Theta)
  \cong
  \GW(k)^{b_p}
  \oplus
  \bigoplus_{q\geq1}
  \left(\KMW_0(k)/q\eta\KMW_1(k)\right)^{m_{p-1}(q)}
  \oplus
  \bigoplus_{q\geq1}
  \left({}_{q\eta}\KMW_0(k)\right)^{m_p(q)} ,
\]
where
\[
  {}_{q\eta}\KMW_0(k)
  =
  \ker\!\left(q\eta\colon \KMW_0(k)\to\KMW_{-1}(k)\right).
\]
When all non-zero Smith elementary divisors are \(1\), this becomes
\[
  \widetilde{CH}^{p}(X_\Theta)
  \cong
  \GW(k)^{\beta_p}
  \oplus
  \Z^{T_{p-1}}
  \oplus
  ({}_{\eta}\GW(k))^{T_p} .
\]
Here \(\KMW_0(k)=\GW(k)\), \(\KMW_0(k)/\eta\KMW_1(k)\cong\Z\), and
\({}_{\eta}\GW(k)=\ker(\GW(k)\to W(k))=\Z\cdot(\langle1\rangle+
\langle-1\rangle)\).  Thus, at the level of additive groups, the
Chow--Witt computation has the same Smith-block origin as
Corollary~\ref{cor:two-torsion}; the ring structure requires the
additional cup-product data computed in the Chow--Witt setting and is not
determined by the Smith normal form alone.
\end{remark}

The torsion ranks can be recovered from cell numbers and Betti numbers.
Let
\[
  c_i^\Theta=\#\{w\in W^\Theta:\length(w)=i\}.
\]
When only \(2\)-torsion occurs, the ranks satisfy
\[
  T_{-1}=0,\qquad
  T_i=c_i^\Theta-\beta_i-T_{i-1}.
\]
Indeed \(c_i^\Theta=\beta_i+\rank D_i+\rank D_{i+1}\), while
\(T_{i-1}=\rank D_i\) and \(T_i=\rank D_{i+1}\).

\subsection{Algorithmic form}

For a fixed split Dynkin type and subset \(\Theta\) for which the
boundary-data hypothesis holds, the
computation is as follows.

\begin{enumerate}[label=\textbf{Step \arabic*.},wide=0pt,leftmargin=*]
  \item Fix Bourbaki numbering, a split pinning, horizontal normal forms
        for all \(w\in W^\Theta\), and the compatible
        Morel--Sawant normal expressions for the complementary
        orientations.
  \item Enumerate the length-graded set \(W^\Theta\) and all Bruhat covers
        \(w\gtrdot w'\).
  \item For each ordinary cover \(a\gtrdot b\), compute the normal-form
        boundary data:
        the deleted position \(I(a,b)\), the root \(\gamma(a,b)\), and the
        degree of the coordinate change determined by commutations and braid
        moves.  The comparison theorem lifts these three factors to
        \[
          \langle -1\rangle^{I(a,b)},\qquad
          \deg^{\A^1}(\Phi_b^{-1}\Phi_{\widehat a_{I(a,b)}}),\qquad
          (\htc(\gamma(a,b)^\vee)+1)_\epsilon\eta.
        \]
        Their product is \((c_\Theta(a,b)/2)\eta\).
  \item Form \(D_i^\Theta=(c_\Theta(w,w')/2)\).  Check
        \(D_{i-1}^\Theta D_i^\Theta=0\).
  \item Form the length-reversing permutation
        \(\pi_\Theta(w)=w_0ww_\Theta\) and the matrices
        \(P_i[w]=[\pi_\Theta(w)]\).  Then form the
        codimension-indexed matrices
        \[
          E_i^\Theta=P_{i-1}^{-1}D_i^\Theta P_i.
        \]
        Equivalently, the coefficient of \([v]\) in \(E_i^\Theta[w]\)
        is \(c_\Theta(\pi_\Theta(w),\pi_\Theta(v))/2\).  Compute a Smith
        decomposition of \(E_\bullet^\Theta\), and insert the elementary
        divisors into Theorem~\ref{thm:smith-formula}.
  \item If the Smith data of \(D_\bullet^\Theta\) are already known to have only unit
        elementary divisors, use the shorter formula in
        Corollary~\ref{cor:two-torsion}.  For \(k=\R\), this is
        equivalently the statement that the real cellular homology is
        \(\Z^{\beta_i}\oplus(\Z/2)^{T_i}\).
\end{enumerate}

The cited work provides signed-boundary formulas for type \(A\), together
with signed-boundary computations for classical types \(B_n,C_n,D_n\) with
\(n\leq 7\) and for exceptional types \(F_4,E_6,E_7\)
\cite[Sections~6.3--6.4]{LambertRabelo2026}.  The accompanying Sage
implementation constructs the boundary matrices and computes the Smith
data \cite{LambertCode}.  These matrices and Smith data are external
integer inputs to the present paper; the contribution here is the
Milnor--Witt lift of those inputs via
Theorem~\ref{thm:eta-comparison} and
Corollary~\ref{cor:partial-eta-comparison}.  Thus each of these integral
boundary computations gives a finite computation of cellular
\(\A^1\)-homology sheaves over \(k\).  In the cases covered by
the cited tables, the Poincar\'e polynomials determine the free ranks
\(\beta_i\);
together with the cell numbers
\(c_i^\Theta\), the recurrence
\[
  T_{-1}=0,\qquad T_i=c_i^\Theta-\beta_i-T_{i-1}
\]
determines the \(2\)-torsion ranks in the elementary \(2\)-torsion cases.
Corollary~\ref{cor:lr-tables} then gives the corresponding sheaf decomposition.  Thus the topological applications in the
cited tables, including the Poincar\'e-polynomial and
orientability computations, can be used as integer inputs for the
motivic calculation without recomputing the \(\A^1\)-local degrees
separately.

\section{Example: \texorpdfstring{$SL_3/B$}{SL3/B}}
\label{sec:sl3-example}

Let \(G=SL_3\), let \(B\) be the standard Borel subgroup, and write the
simple reflections as \(s_1,s_2\), with simple roots
\(\alpha_1,\alpha_2\).  The positive non-simple root is
\(\alpha_{12}=\alpha_1+\alpha_2\), and
\(\htc(\alpha_1^\vee)=\htc(\alpha_2^\vee)=1\), while
\(\htc(\alpha_{12}^\vee)=2\).  Choose the horizontal words
\[
  s_1,\quad s_2,\quad s_1s_2,\quad s_2s_1,\quad
  w_0=s_1s_2s_1 .
\]
The cellular differential is expressed through the reindexing
\(\pi(w)=w_0w\).  A cellular boundary component \(w\to v\), with
\(v\gtrdot w\), is evaluated by the ordinary cover
\(\pi(w)\gtrdot\pi(v)\).  With the deletion convention used throughout
the paper, the relevant data are as follows.  The sign in the two
non-zero rows changes if one changes the corresponding cell generator; the
resulting chain complex is isomorphic.
\[
\begin{array}{c|c|c|c}
\text{cellular }w\to v & \text{ordinary cover }\pi(w)\gtrdot\pi(v)
& c(\pi(w),\pi(v))/2 & \mu(v,w)\\
\hline
e\to s_1 & w_0\gtrdot s_1s_2 & 0 & 0\\
e\to s_2 & w_0\gtrdot s_2s_1 & 0 & 0\\
s_1\to s_1s_2 & s_1s_2\gtrdot s_1 & 0 & 0\\
s_1\to s_2s_1 & s_1s_2\gtrdot s_2 & -1 & -\eta\\
s_2\to s_1s_2 & s_2s_1\gtrdot s_1 & -1 & -\eta\\
s_2\to s_2s_1 & s_2s_1\gtrdot s_2 & 0 & 0
\end{array}
\]
Thus the only non-zero cellular $\mathbb{A}^1$-boundary is the codimension-two
boundary from the two length-one cells to the two length-two cells.  With
the target ordered as \((s_1s_2,s_2s_1)\) and the source ordered as
\((s_1,s_2)\), it is
\[
  \partial_2^{\A^1}
  =
  \begin{pmatrix}
    0 & -\eta\\
    -\eta & 0
  \end{pmatrix}.
\]
Equivalently, after permuting one basis, it is the direct sum of two
copies of \(-\eta\).  Thus all non-zero Smith elementary divisors are \(1\), with two
elementary Smith blocks \(\Z\xrightarrow{1}\Z\), both appearing
between the length-two and length-one cells.  The displayed homology is
therefore exactly the specialization of Theorem~\ref{thm:smith-formula}
with two unit elementary summands.  Since \(\partial_3^{\A^1}=0\) and
\(\partial_1^{\A^1}=0\), the cellular homology sheaves are
\[
\begin{aligned}
  \Hcell_0(SL_3/B)&\cong \Z,\\
  \Hcell_1(SL_3/B)&\cong (\KM_1)^2,\\
  \Hcell_2(SL_3/B)&\cong ({}_{\eta}\KMW_2)^2,\\
  \Hcell_3(SL_3/B)&\cong \KMW_3 .
\end{aligned}
\]
After real realization, the two \(-\eta\)-blocks become the two
non-zero integral boundary entries \(-2\), giving
\[
  H_1((SL_3/B)(\R),\Z)\cong(\Z/2)^2,\qquad
  H_3((SL_3/B)(\R),\Z)\cong\Z,
\]
with no other reduced integral homology.

\section{Application to the full split flag of type
\texorpdfstring{$F_4$}{F4}}
\label{sec:f4-example}

Let \(X=G/B\) be the full split flag variety of type \(F_4\) over \(k\).
The Weyl exponents are \(1,5,7,11\), so the Bruhat cell-count
polynomial is
\[
  C_{F_4}(t)
  =
  (1+t)(1+t+\cdots+t^5)(1+t+\cdots+t^7)(1+t+\cdots+t^{11}).
\]
The cited Poincar\'e polynomial for the integral free ranks of
\((A_\bullet,D_\bullet)\) is
\[
  B_{F_4}(t)
  =
  (1+t^{11})(1+t^7)(1+t^3)^2.
\]
This is the full-flag \(F_4\) entry in
\cite[Theorem~2.5]{LambertRabelo2026}.
The same integral homology computation records the real flag homology in the
form
\[
  H_i(X(\R),\Z)\cong \Z^{\beta_i}\oplus(\Z/2)^{T_i}
\]
\cite[Sections~2 and~6.4]{LambertRabelo2026}; see also the accompanying
implementation \cite{LambertCode}.  For the cellular complex
this is equivalent to all non-zero Smith elementary divisors of
\(D_\bullet\) being \(1\): if \(q\) is a non-zero elementary divisor of
\(D_\bullet\), then the corresponding elementary divisor of the ordinary real
cellular boundary is \(2q\), and it contributes \(\Z/(2q)\) to integral
homology.  Thus the occurrence of only \(\Z/2\)-torsion forces
\(q=1\), and conversely the condition \(q=1\) for every non-zero elementary divisor gives only
\(\Z/2\)-torsion.  Consequently the torsion ranks are determined from
the cell numbers and Betti numbers by
\[
  T_{-1}=0,\qquad T_i=c_i-\beta_i-T_{i-1}.
\]
The resulting values are listed in Table~\ref{tab:f4}.

\begin{table}[htbp]
\caption{Cell numbers, free ranks, and unit Smith-summand counts for the
full split flag variety of type \(F_4\).  Over \(\R\), these are the Betti
numbers and \(2\)-torsion ranks of the real flag manifold.}
\label{tab:f4}
\centering
\begin{tabular}{rrrr}
\toprule
\(i\) & \(c_i\) & \(\beta_i\) & \(T_i\)\\
\midrule
0 & 1 & 1 & 0\\
1 & 4 & 0 & 4\\
2 & 9 & 0 & 5\\
3 & 16 & 2 & 9\\
4 & 25 & 0 & 16\\
5 & 36 & 0 & 20\\
6 & 48 & 1 & 27\\
7 & 60 & 1 & 32\\
8 & 71 & 0 & 39\\
9 & 80 & 0 & 41\\
10 & 87 & 2 & 44\\
11 & 92 & 1 & 47\\
12 & 94 & 0 & 47\\
13 & 92 & 1 & 44\\
14 & 87 & 2 & 41\\
15 & 80 & 0 & 39\\
16 & 71 & 0 & 32\\
17 & 60 & 1 & 27\\
18 & 48 & 1 & 20\\
19 & 36 & 0 & 16\\
20 & 25 & 0 & 9\\
21 & 16 & 2 & 5\\
22 & 9 & 0 & 4\\
23 & 4 & 0 & 0\\
24 & 1 & 1 & 0\\
\bottomrule
\end{tabular}
\end{table}

Applying Corollary~\ref{cor:two-torsion} to these Smith data gives
\[
  \Hcell_i(X)
  \cong
  (\KMW_i)^{\beta_i}
  \oplus
  (\KM_i)^{T_i}
  \oplus
  ({}_\eta\KMW_i)^{T_{i-1}}
  \qquad (1\leq i\leq 24),
\]
with \(\Hcell_0(X)\cong\Z\), where the values of \(\beta_i\) and \(T_i\)
are those in Table~\ref{tab:f4}.  For example,
\[
\begin{aligned}
  \Hcell_1(X) &\cong (\KM_1)^4,\\
  \Hcell_2(X) &\cong (\KM_2)^5\oplus({}_\eta\KMW_2)^4,\\
  \Hcell_3(X) &\cong (\KMW_3)^2\oplus(\KM_3)^9
                  \oplus({}_\eta\KMW_3)^5,\\
  \Hcell_{24}(X) &\cong \KMW_{24}.
\end{aligned}
\]

\end{document}